\numberwithin{equation}{section}
\newtheorem{theorem}{Theorem}[section]
\newtheorem{lemma}[theorem]{Lemma}
\newtheorem{proposition}[theorem]{Proposition}
\theoremstyle{definition}
\newtheorem{remark}[theorem]{Remark}
\DeclareRobustCommand{\rchi}{{\mathpalette\irchi\relax}}
\renewcommand{\Bbb}{\mathbb}
\newcommand{\irchi}[2]{\raisebox{\depth}{$#1\chi$}} 
\newcommand{\mc}{\mathcal}
\newcommand{\mb}{\mathbb}
\newcommand{\la}{\lambda}
\newcommand{\norm}[1]{\left\lVert#1\right\rVert}
\newcommand{\R}{\mb{R}}
\newcommand{\C}{\mb{C}}
\newcommand{\N}{\mb{N}}
\newcommand{\Z}{\mb{Z}}
\newcommand{\e}{\varepsilon}
\newcommand{\dive}{\mathop{\rm{div}}}
\begin{document}

\title[On Aharonov-Bohm operators
with multiple colliding poles of any circulation]
{On Aharonov-Bohm operators
with multiple colliding poles of any circulation}

\author[V. Felli, B. Noris,  and G. Siclari]{Veronica Felli, Benedetta Noris, and Giovanni Siclari}

\address{Veronica Felli
  \newline \indent Dipartimento di Matematica e Applicazioni
  \newline \indent
Universit\`a degli Studi di Milano–Bicocca
\newline\indent Via Cozzi 55, 20125 Milano, Italy}
\email{veronica.felli@unimib.it}

\address{Benedetta Noris  and Giovanni Siclari 
  \newline \indent Dipartimento di Matematica
  \newline \indent Politecnico di Milano
  \newline\indent Piazza Leonardo da Vinci 32, 20133 Milano, Italy}
\email{benedetta.noris@polimi.it,  giovanni.siclari@polimi.it}

\date{June 14, 2024.}

\begin{abstract}
  This paper deals with quantitative spectral stability for
  Aharonov-Bohm operators with many colliding poles of whichever
  circulation.  An equivalent formulation of the eigenvalue problem is
  derived as a system of two equations with real coefficients, coupled
  through prescribed jumps of the unknowns and their normal
  derivatives across the segments joining the poles with the collision
  point.  Under the assumption that the sum of all circulations is not
  integer, the dominant term in the asymptotic expansion for
  eigenvalues is characterized in terms of the minimum of an energy
  functional associated with the configuration of poles. Estimates of
  the order of vanishing of the eigenvalue variation are then deduced
  from a blow-up analysis, yielding sharp asymptotics in some
  particular examples.
\end{abstract}

\maketitle

\hskip10pt{\footnotesize {\bf Keywords.} Aharonov–Bohm operators, spectral theory, asymptotics of eigenvalues, blow-up analysis.

\medskip 

\hskip10pt{\bf 2020 MSC classification.}
35P15,  	
35B40,  	
35B44,  	
35J10.  	
}

\section{Introduction}
Continuing the study initiated in \cite{FNOS_multipole}, we deal with
the problem of  spectral stability for Aharonov-Bohm operators with many coalescing poles.
 The main novelty of the present paper lies in the generality of the assumptions imposed on the circulations of poles.
Indeed, we do not restrict our attention to the case of half-integer circulations, as done in \cite{FNOS_multipole}.

Specifically, we study how the eigenvalues of Aharonov-Bohm operators respond to variations in the position of the poles. 
For every $b=(b_1,b_2) \in \R^2$, the Aharonov-Bohm vector potential with pole $b$ and circulation $\rho\in\R$ is defined as 
\begin{equation}\label{def_AB_vector}
A_b^\rho(x_1,x_2):=\rho\left(\frac{-(x_2-b_2)}{(x_1-b_1)^2+(x_2-b_2)^2},\frac{x_1-b_1}{(x_1-b_1)^2+(x_2-b_2)^2}\right), \quad (x_1,x_2)\in\R^2\setminus	\{b\}.
\end{equation}
The associated  Aharonov-Bohm magnetic field  arises when  an
infinitely long thin solenoid intersects perpendicularly the plane
$(x_1,x_2)$ at the point $b$, with  the radius of the solenoid going to
zero and the magnetic flux remaining constantly equal to $\rho$, see \cite{AT1998,AB1959}.
The most  studied case in the  literature concernes  half-integer circulations  $\rho\in \frac12+\Z$, whose  mathematical relevance is related to  applications  to the problem of  spectral minimal partitions,  as highlighted in \cite{BNHHO2009,HHT2009,NT2010}. 

For Schr\"odinger operators of the form $(i\nabla+A_{b}^\rho)^2$, with Aharonov-Bohm magnetic potentials as in \eqref{def_AB_vector}, the continuity of eigenvalues with respect to the  pole's location  is established in \cite{BNNNT} for a single pole, and in \cite{Le2015} for multiple (possibly colliding) poles. Starting from this, several papers have delved into determining the precise asymptotic behavior of the eigenvalue variation, when the configuration of the poles undergoes small perturbations.
In the case of a single moving pole with half-integer circulation, \cite{BNNNT}  identifies a relation between the convergence rate of eigenvalues and the number of nodal lines of the corresponding eigenfunction. More refined asymptotic expansions for simple eigenvalues are presented in \cite{AF2015}, where a pole moving along the tangent to a nodal line of the limit eigenfunction is considered, and in \cite{AF2016}, for a pole moving in any direction. The scenario of a pole approaching the boundary is addressed in \cite{AFNN2017} and \cite{NNT}, while some genericity issues are discussed in \cite{A2019} and \cite{AN}.

Dealing with multiple colliding poles poses additional significant difficulties. References  \cite{AFHL} and \cite{AFL2020} consider two coalescing poles (both with half-integer circulation), moving along an axis of symmetry of the domain, in the cases where this axis is, or respectively is not, tangent to a nodal line of the limit eigenfunction. The symmetry assumption  is dropped in \cite{AFL2017}, in the case of   two poles 
colliding at an interior point outside the nodal set  of the limit eigenfunction. The recent paper  \cite{FNOS_multipole} provides an asymptotic expansion of the eigenvalue variation for Aharonov-Bohm operators with many coalescing poles; the leading term in this expansion is related to the minimum of an energy functional, associated to the pole configuration and  defined on a space of functions jumping along cracks, aligned with the moving directions of the poles.

In the aforementioned papers, asymptotic expansions for the  eigenvalue variation are discussed only in the case of  half-integer circulations. So far, the case of  non-half-integer circulation appears to have been exclusively addressed in  \cite{AFNN2018}, where estimates (rather than the exact asymptotic behavior) are obtained for a single moving pole.

We consider a bounded  connected domain $\Omega \subset \R^2$ and $k$ poles  moving in $\Omega$  towards  a fixed  point $P \in \Omega$ along straight lines. It is not restrictive to fix $P=0$, so 
that   the moving poles can be rewritten as multiples of $k$ fixed points  $\{a^j\}_{j=1,\dots,k}$ with the same infinitesimal parameter $\e$.
Furthermore, since we are interested in asymptotic expansions of eigenvalues as $\e \to 0^+$, we may assume that, for some $R \in (0,1)$,
\begin{equation*}
	\{a^j\}_{j=1,\dots,k} \subset D_R\subset \Omega,
\end{equation*}
where, for every $r>0$, $D_r:=\{y \in\R^2: |y|<r\}$.
We consider  configurations in which each pole  is the only one on the straight line connecting it to the collision point, i.e. the origin, so that,
  for every $j=1,\dots, k$, there exist $r_j>0$ and $\alpha^j\in
(-\pi,\pi]$ such that $\alpha^{j}\neq\alpha^\ell$, $\alpha^{j}\neq\alpha^\ell\pm \pi $   if $j\neq\ell$ and  
\begin{equation}\label{def_aj}
a^j= r_j(\cos(\alpha^j),\sin(\alpha^j)).
\end{equation}
For every $j=1,\dots,k$ and $\e \in (0,1]$, let
\begin{equation*}
a_\e^j:=\e a^j.
\end{equation*}
For every $(\rho^1,\dots,\rho^k)\in\R^k$ and $\e\in(0,1]$, we are interested in the multi-singular vector  potential
\begin{equation}\label{def_AB_vector_multi}
\quad \mathcal	A_\e^{(\rho^1,\dots,\rho^k)}:=\sum_{j=1}^{k}A_{a^j_\e}^{\rho^j}
\end{equation}
and the corresponding eigenvalue problem
with Dirichlet boundary conditions
\begin{equation}\label{prob_Aharonov-Bohm_multipole}
\begin{cases}
\left(i\nabla +\mathcal A_\e^{(\rho^1,\dots,\rho^k)}\right)^2 u= \la \,u, &\text{ in } \Omega,\\
u=0, &\text{ on } \partial \Omega,
\end{cases}
\end{equation}  
where the magnetic Schr\"odinger operator $(i\nabla +\mathcal A_\e^{(\rho^1,\dots,\rho^k)})^2$ 
acts as 
\begin{equation}\label{eq:sch-op}
\left(i\nabla +\mathcal A_\e^{(\rho^1,\dots,\rho^k)}\right)^2u:=-\Delta u +2 i \,\mathcal A_\e^{(\rho^1,\dots,\rho^k)}\cdot \nabla u+\big|\mathcal A_\e^{(\rho^1,\dots,\rho^k)}\big|^2 u.
\end{equation}
It is not restrictive to suppose that 
\begin{equation}\label{eq:all-non-int}
\rho^j \not \in \Z\quad\text{for all $ j =1,\dots,k$},
\end{equation}
since 
$\mathcal A_\e^{(\rho^1,\dots,\rho^k)}$ is gauge equivalent to the vector potential
$\mathcal A_\e^{(\rho^1+n_1,\dots,\rho^k+n_k)}$ for any $n_1\dots,n_k \in \Z$.
It follows that the corresponding Schr\"odinger operators
are unitarily equivalent (see \cite[Theorem 1.2]{L_gauge_invariance} and \cite[Proposition 2.2]{Le2015}), and therefore they  have the same spectrum.
In the following we assume that 
\begin{equation}\label{def_rho}
\rho:=\sum_{j=1}^k \rho^j \not \in \Z.
\end{equation} 
By the gauge equivalence mentioned above, it is not restrictive to suppose that 
\begin{equation}\label{hp_cirulation}
\rho \in  (0,1).
\end{equation}
By classical spectral theory, problem \eqref{prob_Aharonov-Bohm_multipole} admits a  diverging sequence of positive real  eigenvalues $\{\la_{\e,n}\}_{n\geq1}$ with finite multiplicity. In the sequence $\{\la_{\e,n}\}_{n \geq1}$ we repeat each eigenvalue according to its multiplicity.

As emerges from \cite[Theorem 1.2]{Le2015}, the following limit problem arises as  $\e\to0^+$:
\begin{equation}\label{prob_Aharonov-Bohm_0}
\begin{cases}
\left(i\nabla +A_0^{\rho} \right)^2u=\la u, &\text{ in } \Omega,\\
u=0, &\text{ on } \partial \Omega.
\end{cases}
\end{equation} 
where $A_0^{\rho}$ is as  in \eqref{def_AB_vector} with pole at $0$.
Also for \eqref{prob_Aharonov-Bohm_0},
the classical Spectral Theorem  provides a diverging sequence of real positive eigenvalues $\{\la_{0,n}\}_{n\in \N\setminus\{0\}}$ with finite multiplicity, which are 
repeated in the enumeration  according to their multiplicity.

Furthermore,   by \cite[Theorem 1.2]{Le2015},
\begin{equation*}
	\text{the function}\quad\e \mapsto \la_{\e,n} \quad \text{is continuous in } [0,1], 
\end{equation*}
so that, in particular, 
\begin{equation}\label{limit_lae_la0}
	\lim_{\e\to0^+}\la_{\e,n} = \la_{0,n}
\end{equation}
for every $n \in \mathbb{N}\setminus\{0\}$. 
The present paper aims at estimating the vanishing order of the variation $\la_{\e,n} -\la_{0,n}$ of simple eigenvalues  with respect to the moving configuration of poles.

The case $\rho_j\in\frac12+\Z$  discussed in \cite{FNOS_multipole} exhibits peculiarities that enable a perspective and an approach not entirely applicable in the non-half-integer case. Specifically, if $\rho_j\in\frac12+\Z$, a gauge transformation allows the problem to be reduced to an eigenvalue problem for the Laplacian under prescribed jumping conditions on cracks. In this situation, the eigenfunctions can be assumed to be real-valued and possess an odd number of nodal lines branching off from  each pole. However, if $\rho_j\in\R\setminus(\Z/2)$ for some $j$, the eigenfunctions remain complex even after the gauge transformation; moreover, the poles with non-half-integer circulation are isolated zeroes, see \cite[Section 7]{FFT}. 

In the present paper, the first step consists in a new equivalent formulation in terms of a real eigenvalue problem. Specifically, we observe that  problem \eqref{prob_Aharonov-Bohm_multipole} is equivalent to a system of two equations with real coefficients, where the  unknowns are the real and imaginary parts of the gauged eigenfunction, see \eqref{prob_eigenvalue_gauged}. Such equations are coupled through prescribed jumps of the unknowns and their normal derivatives across cracks, directed as   the  segments joining the poles with the collision point.  
Moreover,  each eigenvalue's multiplicity doubles when passing to the new formulation \eqref{prob_eigenvalue_gauged}.

 Given such an equivalent formulation, in Theorem \ref{theorem_asymptotic_not_precise} we derive an asymptotic
expansion of the variation of simple eigenvalues of the form
\begin{equation*}
\la_{\e,n}-\la_{0,n}=2(\mc{E}_\e+L_\e(v_0,w_0))+o(\|\nabla V_\e\|_{L^2}^2+\|\nabla W_\e\|_{L^2})\quad \text{as $\e \to 0^+$},
\end{equation*}
where $\mathcal{E}_\e$ is the minimum,  attained by the couple of functions $(V_\e,W_\e)$, of an energy functional associated
with the configuration of poles, see \eqref{def_Ee}, and $L_\e$ is a suitable linear
functional involving the limit eigenfunction $(v_0,w_0)$ of \eqref{prob_eigenvalue_gauged} on the cracks.

A blow-up analysis allows us to identify the asymptotic behavior of $\mc{E}_\e$ and $(V_\e,W_\e)$ as $\e\to0^+$, thus estimating the vanishing order of the eigenvalue variation in  Theorem \ref{theorem_asymptotic_precise} and obtaining sharp estimates on the behavior of the eigenfunctions in Theorem \ref{theorem_blow_up_eigenfunctions}.
We observe that detecting the 
exact vanishing order of $2(\mc{E}_\e+L_\e(v_0,w_0))$, and consequently of $\la_{\e,n}-\la_{0,n}$, is much more delicate in the non-half-integer case compared
to the half-integer one, as was already the case for one pole \cite{AFNN2018}. 
In specific scenarios, the blow-up result provided in Theorem \ref{theorem_asymptotic_precise} is sufficient to yield the exact asymptotics. For instance, identifying the precise convergence rate is feasible when the sum of (non-half-integer) circulations of all poles is half-integer, as observed in Proposition \ref{prop_positive_negative}.

The paper is organized as follows. In the next section, we state and discuss the main results. In Section \ref{sec_preliminaries}, we introduce the gauge transformation which allows us to obtain the  equivalent formulation \eqref{prob_eigenvalue_gauged} for the eigenvalue problem; furthermore, in Subsection \ref{subsec:asy-limit-eige} we describe the asymptotic behaviour of eigenfunctions of the limit problem, depending on whether the sum $\rho$ of the circulations of all colliding poles  is  half-integer or not. In Section \ref{sec_propoerties_Ee} we derive some  preliminary estimates on the quantity $\mathcal E_\e$. Section \ref{sec_asymptotic_eigenvalue_variation}
is devoted to the proof of Theorem \ref{theorem_asymptotic_not_precise}, while in Section \ref{sec:blowup} we perform a blow-up analysis, providing precise information about the asymptotic behaviour of $\mathcal E_\e$ and $(V_\e,W_\e)$ as $\e\to0^+$, as stated in Theorem \ref{theorem_asymptotic_precise}, see also Proposition \ref{prop_blow_up}; the characterization of the concrete functional space containing the blow-up limit profile is possible thanks to the Hardy type inequality obtained in Subsection \ref{subsection_hardy_ineq}.

\section{Statement of the main results}\label{sec_main_results}
For every $\e\in (0,1]$, a variational formulation of problem \eqref{prob_Aharonov-Bohm_multipole}  can be given in the functional space
\begin{equation*}
    H^{1,\e}(\Omega,\C)=\left\{\varphi\in
H^1(\Omega,\C):\frac{\varphi}{|\cdot-a^j_\e|}\in L^2(\Omega,\C)\text{ for
	all }j=1,\dots,k\right\},
\end{equation*}
which can be equivalently defined as the completion of
\begin{equation*}
\{\varphi \in H^1(\Omega,\C) \cap C^\infty(\Omega,\C): \varphi\equiv 0\text{ in  a neighbourhood of } a^j_\e \text{ for all } j=1,\dots,k\}
\end{equation*}
with respect to the norm 
\begin{equation}\label{def_norm_H1a}
\norm{w}_{H^{1,\e}(\Omega,\C)}:=\bigg(\norm{\varphi}_{L^2(\Omega,\C)}^2+\norm{\nabla \varphi}_{L^2(\Omega,\C^2)}^2+\sum_{j=1}^{k}\Big\|\tfrac{\varphi}{|\cdot-a^j_\e|}\Big\|_{L^2(\Omega,\C)}^2\bigg)^{\!\!1/2}.
\end{equation}
A deep connection between the space $H^{1,\e}(\Omega,\C)$ and the operator \eqref{eq:sch-op} is induced by the following magnetic Hardy-type inequality proved in \cite{LW}, see also \cite{AFJT} and \cite[Lemma 3.1,
Remark 3.2]{FFT}:
\begin{equation*}
\int_{D_r(b)}|i \nabla \varphi + A_b^\rho \varphi|^2 \, dx \ge \Big(\min_{j \in \mb{Z}}|j-\rho|\Big)^2\int_{D_r(b)}\frac{|\varphi(x)|^2}{|x-b|^2}\, dx
\end{equation*}
for every $ b \in \R^2$ and
$\varphi \in C^\infty_{\rm c}(\overline{D_r(b)}\setminus\{b\},\C)$,
where $D_r(b):=\{y \in\R^2: |y-b|<r\}$.
In particular, under assumption \eqref{eq:all-non-int}, the norm
\eqref{def_norm_H1a} is equivalent to the norm
\begin{equation*}
\left(\norm{\left(i\nabla +\mathcal	A_\e^{(\rho_1,\dots,\rho_k)}\right)\varphi}_{L^2(\Omega,\C^2)}^2+\norm{\varphi}_{L^2(\Omega,\C)}^2\right)^{\!1/2}.
\end{equation*}
We also consider the subspace
\begin{equation*}
H_0^{1,\e}(\Omega,\C)=\left\{\varphi \in  H_0^1(\Omega,\C):\tfrac{\varphi}{|\cdot-a^j_\e|}\in L^2(\Omega,\C) \text{ for all } j=1,\dots,k\right\}.
\end{equation*}
We say that $\lambda \in\R$ is an eigenvalue of \eqref{prob_Aharonov-Bohm_multipole} if there exists an eigenfunction $u \in H_0^{1,\e}(\Omega,\C)\setminus\{0\}$
such that
\begin{equation}\label{eq_eigenfunctions_Aharonov_Bohm_multipole}
\int_\Omega \left(i\nabla +\mathcal A_\e^{(\rho_1,\dots,\rho_k)}\right)u\cdot \overline{\left(i\nabla+\mathcal A_\e^{(\rho_1,\dots,\rho_k)}\right)\varphi} \, dx =\la \int_\Omega u \overline{\varphi} \, dx
\quad \text{for all } \varphi \in H_0^{1,\e}(\Omega,\C).
\end{equation}
The limit problem \eqref{prob_Aharonov-Bohm_0} is settled in the functional space
\begin{equation*}
H_0^{1,0}(\Omega,\C)=\left\{\varphi \in  H_0^1(\Omega,\C):\tfrac{\varphi}{|x|}\in L^2(\Omega,\C)\right\};
\end{equation*}
we say that $\lambda \in\R$ is an eigenvalue of \eqref{prob_Aharonov-Bohm_0} if there exists an eigenfunction $u \in H_0^{1,0}(\Omega,\C)\setminus\{0\}$ 
such that
\begin{equation*}
\int_\Omega \left(i\nabla +A_0^\rho\right)u\cdot \overline{\left(i\nabla+\mathcal A_0^\rho \right)\varphi} \, dx =\la \int_\Omega u \overline{\varphi} \, dx
\quad \text{for all } \varphi \in H_0^{1,0}(\Omega,\C).
\end{equation*}

\begin{remark}\label{rem:propertyP}
We observe that, if $\rho=\frac12$ and $\lambda$ is an eigenvalue of \eqref{prob_Aharonov-Bohm_0}, the associated eigenspace admits a basis consisting of $K$-real eigenfunctions, i.e. of eigenfunctions 
invariant under the action of the antilinear operator $Ku(r\cos t,r\sin t)=e^{i(t+2\Lambda)}\overline{u}(r\cos t,r\sin t)$, where 
\begin{equation*}
\Lambda=\frac\pi 2-\sum_{j=1}^k\rho^j\alpha^j,
\end{equation*}
see \cite[Lemma 3.3]{HHOO99},  \cite[Lemma 2.3]{BNNNT}, and  \cite[Remark 3.5]{FNOS_multipole}. We could use here any other real constant $\Lambda$; this specific choice is made just to simplify the writing of Proposition \ref{prop_vw_asympotic_1/2}. We note that $u$ is $K$-real if and only if it satisfies
the property
\begin{equation}\label{eq:propertyP}
 \text{$e^{-i(\frac t2+\Lambda})u(r\cos t,r\sin t)$ is a real-valued function.}
\end{equation}
\end{remark}
By a suitable gauge
transformation, 
\eqref{prob_Aharonov-Bohm_multipole} and \eqref{prob_Aharonov-Bohm_0} can be reformulated as 
eigenvalue problems for the Laplacian in domains with straight cracks.
For every $\e \in [0,1]$ and $j=1,\dots,k$, we define 
\begin{equation*}
\Sigma^j:=\{ta^j: t \in \R\}, \quad \Gamma^j_\e:=\{ta^j: t \in (-\infty,\e]\}, \quad S_\e^j:=\{ta^j: t \in [0,\e]\}.
\end{equation*}
Let 
\begin{equation*}
\Gamma_\e:=\bigcup_{j=1}^{k} \Gamma_\e^j
\end{equation*}
and $\mathcal H_\e$ be  the functional space defined as the closure of
\begin{equation*}
\left\{\varphi \in H^1(\Omega \setminus \Gamma_\e)=H^1(\Omega \setminus \Gamma_\e,\R): \, \varphi=0 \text{ on a		neighbourhood of } \partial \Omega\right\}
\end{equation*}
in $H^1(\Omega \setminus\Gamma_\e)$ with respect to the norm $\norm{\varphi}_{H^1(\Omega \setminus\Gamma_\e)}=\|\nabla \varphi\|_{L^2(\Omega\setminus \Gamma_\e)}+\|\varphi\|_{L^2(\Omega)}$.
We observe that $\mc{H}_{\e}$-functions satisfy the following Poincaré-type inequality: 
\begin{equation}\label{prop:appendix3_2}
  \int_{\Omega} \varphi^2 dx \le C_P \int_{\Omega \setminus \Gamma_\e} |\nabla \varphi|^2 \, dx,\quad\text{for every }\varphi \in \mc{H}_{\e},
\end{equation}
for some constant $C_P>0$ which is independent of $\e$, see \cite[Proposition 3.2]{FNOS_multipole}.

By  \eqref{prop:appendix3_2}, the norm
\begin{equation*}
\norm{\varphi}_{\mc{H}_\e}:= \left(\int_{\Omega \setminus \Gamma_\e} |\nabla \varphi|^2 \, dx\right)^{\!\!1/2}
\end{equation*}
on $\mc{H}_{\e}$ is equivalent to $\norm{\varphi}_{H^1(\Omega \setminus  \Gamma_\e)}$. We denote
the corresponding scalar product as $(\cdot,\cdot)_{\mathcal H_\e}$.

By classical trace  and embedding theorems for fractional Sobolev spaces in dimension $1$, for every $j=1,\dots,k$ and
$p \in [2,+\infty)$  there exist continuous trace operators
\begin{equation}\label{def_traces}
\gamma_+^j:H^1(\pi_+^j\setminus  \Gamma_1 ) \to L^p(\Sigma^j)  \quad
\text{and} \quad
\gamma_-^j:H^1(\pi_-^j\setminus  \Gamma_1 ) \to L^p(\Sigma^j),
\end{equation} 
where, letting  $\nu^j:=\big(-\sin (\alpha^j), \cos (\alpha^j)\big)$,
\begin{equation*}
\pi_+^j:=\{x \in \R^2: x\cdot \nu^j>0\}\quad\text{and}\quad
\pi_-^j:=\{x \in \R^2: x\cdot \nu^j<0\}.
\end{equation*}
 We observe that, for every $\e\in[0,1]$, the restrictions to $\mathcal H_\e$ of the operators
$\gamma_+^j$ and $\gamma_-^j$  are  continuous and
compact from $\mathcal H_\e$ into $L^p(\Sigma^j\cap\Omega)$ for all
$p\in[1,+\infty)$.

For every $j=1,\dots,k$, we define 
\begin{equation}\label{def_bj_dj}
b_j:=\cos(2 \pi \rho^j), \quad d_j:=\sin(2 \pi \rho^j),
\end{equation}
and the linear  operators 
\begin{align}
&\label{def_R}R^j:H^1(\R^2\setminus \Gamma_1)\times 
H^1(\R^2\setminus \Gamma_1)\to L^p(\Sigma^j), \\ 
&\notag R^j(\varphi,\psi):=\gamma^j_-(\varphi\vert_{\pi_-^j})-b_j \gamma^j_+(\varphi\vert_{\pi_+^j})+d_j \gamma^j_+(\psi\vert_{\pi_+^j}), \\
&\label{def_I} I^j:H^1(\R^2\setminus \Gamma_1)\times 
H^1(\R^2\setminus \Gamma_1) \to L^p(\Sigma^j),\\ &\notag I^j(\varphi,\psi):=\gamma^j_-(\psi\vert_{\pi_-^j})-d_j \gamma^j_+(\varphi\vert_{\pi_+^j})-b_j \gamma^j_+(\psi\vert_{\pi_+^j}),
\end{align}
which are continuous when
$H^1(\R^2\setminus \Gamma_1)\times 
H^1(\R^2\setminus \Gamma_1)$ is endowed with the 
norm 
\begin{equation*}
\norm{(\varphi,\psi)}_{H^1(\R^2\setminus \Gamma_1)\times 
H^1(\R^2\setminus \Gamma_1)}
:=\sqrt{\norm{\varphi}_{H^1(\R^2\setminus \Gamma_1)}^2+\norm{\psi}^2_{H^1(\R^2\setminus \Gamma_1)}}.
\end{equation*}
For every $\e \in [0,1]$, we consider the Hilbert space
$\mathcal{H}_\e\times \mathcal{H}_\e$, endowed with the norm
\begin{equation}\label{def_norm_HexHe}
\norm{(\varphi,\psi)}_{\mc{H}_\e\times \mc{H}_\e}:=\sqrt{\norm{\varphi}^2_{\mc{H}_\e}+\norm{\psi}^2_{\mc{H}_\e}},
\end{equation}
and its closed subspace 
\begin{equation}\label{def_tilde_H_e}
\widetilde{\mc{H}}_\e:= 
\left\{(\varphi,\psi)\in \mc{H}_\e\times \mc{H}_\e: R^j(\varphi,\psi)=I^j(\varphi,\psi)=0 \text{ on $\Gamma_\e^j$ for all } j=1,\dots,k\right\}.
\end{equation}
Finally, we endow the Hilbert space $L^2(\Omega) \times L^2(\Omega)$  with the norm 
\begin{equation*}
\norm{(\varphi,\psi)}_{L^2(\Omega)\times L^2(\Omega)}:=\sqrt{\norm{\varphi}^2_{L^2(\Omega)}+\norm{\psi}^2_{L^2(\Omega)}}
\end{equation*}
and the corresponding scalar product
\begin{equation*}
\big((\varphi_1,\psi_1),(\varphi_2,\psi_2)\big)_{L^2(\Omega)\times L^2(\Omega)}:=\int_\Omega (\varphi_1\varphi_2+\psi_1\psi_2)\,dx.
\end{equation*}

\begin{remark}\label{remark_psi_varphi}
It is worth noticing that, if $(\varphi,\psi) \in \widetilde{\mc{H}}_\e$, then also $(-\psi,\varphi)$ 
belongs to $ \widetilde{\mc{H}}_\e$ in view of \eqref{def_R}, \eqref{def_I} and \eqref{def_tilde_H_e}.
\end{remark}

 For every $\e\in (0,1]$ there exists  a function 
\begin{equation*}
	\Theta_\e:\R^2\setminus\{a^j_\e:j=1,\dots,k\}\to\R
\end{equation*}
such that 
\begin{equation}\label{eq:proprieta_Theta-eps}
\begin{cases}
\Theta_\e\in C^\infty(\R^2\setminus \Gamma_\e),\\
\text{$\nabla \Theta_\e$ can be extended to be in $C^\infty(\R^2\setminus\{a^j_\e:j=1,\dots,k\})$ with $\nabla\Theta_\e=\mathcal A_\e^{(\rho_1,\dots,\rho_k)}$},
\end{cases}        
\end{equation}
see Section \ref{subsec_equivalent_Aharonov_Bohm} for the construction of $\Theta_\e$. 
If $u\in H^{1,\e}(\Omega,\mb{C})$, then letting
\begin{equation}\label{def_RI_e}
v:=\mathop{\rm{Re}}(e^{-i\Theta_\e} u), \quad  w:= \mathop{\rm{Im}}(e^{-i\Theta_\e} u),
\end{equation}
we have that $(v,w)\in \widetilde{\mc{H}}_\e$ and moreover, by \eqref{eq:proprieta_Theta-eps}, 
\begin{equation}\label{eq:tras-grad}
    (i\nabla +\mc{A}_\e^{(\rho_1\dots,\rho_k)})u=ie^{i\Theta_\e}(\nabla v+i\nabla w)\quad \text{in }\Omega\setminus\Gamma_\e.
\end{equation}
It follows that,
if $\la$ is an eigenvalue of problem \eqref{prob_Aharonov-Bohm_multipole},   with $u \in H^{1,\e}_0(\Omega,\C)$ being a corresponding eigenfunction, then the pair $(v,w)\in \widetilde{\mc{H}}_\e$ defined in \eqref{def_RI_e}
solves the system
\begin{equation}\label{prob_eigenvalue_gauged}
\begin{cases}
-\Delta v= \la  v,  &\text{in } \Omega \setminus \Gamma_\e,\\
-\Delta w= \la  w,  &\text{in } \Omega \setminus \Gamma_\e,\\
v=w=0, &\text{on } \partial \Omega,\\
R^j(v,w)=I^j(v,w)=0,&\text{on }\Gamma^j_\e \text{ for all } j=1,\dots,k,\\
R^j(\nabla v\cdot \nu^j,\nabla w\cdot \nu^j)=I^j(\nabla v\cdot \nu^j, \nabla w\cdot \nu^j)=0, &\text{on }\Gamma^j_\e \text{ for all } j=1,\dots,k.\\
\end{cases}
\end{equation}
More precisely, problems \eqref{prob_Aharonov-Bohm_multipole} and  \eqref{prob_eigenvalue_gauged} share the same eigenvalues. Moreover,  through the transformation \eqref{def_RI_e}, every eigenfunction $u$ of problem \eqref{prob_Aharonov-Bohm_multipole} generates two linearly independent real eigenfunctions of \eqref{prob_eigenvalue_gauged}, given by the couples $(v,w)$ and $(-w,v)$, with  $v,w$ as in \eqref{def_RI_e}; hence the multiplicity of each eigenvalue of \eqref{prob_Aharonov-Bohm_multipole} doubles when considered as an eigenvalue of \eqref{prob_eigenvalue_gauged}.

An analogous transformation can be made for the limit problem, by means of a function 
\begin{equation*}
\Theta_0:\R^2\setminus\{0\}\to\R
\end{equation*}
introduced in Section \ref{subsec_equivalent_Aharonov_Bohm},
satisfying
\begin{equation}\label{eq:proprieta_Theta-0}
\begin{cases}
\Theta_0\in C^\infty(\R^2\setminus \Gamma_0),\\
\text{$\nabla \Theta_0$ can be extended to be in $C^\infty(\R^2\setminus\{0\})$ with $\nabla\Theta_0=A_0^{\sum_{j=1}^k \rho_j}=A_0^\rho$}.
\end{cases}        
\end{equation}
From \eqref{eq:proprieta_Theta-0} it follows that, 
if $u\in H^{1,0}(\Omega,\mb{C})$,
 $v:=\mathop{\rm{Re}}(e^{-i\Theta_0}u)$, and 
$w:=\mathop{\rm{Im}}(e^{-i\Theta_0}u)$, then 
 $(v,w)\in \widetilde{\mc{H}}_0$ and
\begin{equation}\label{eq:gradiente-gauge}
    (i\nabla +A^\rho_0)u=ie^{i\Theta_0}(\nabla v+i\nabla w)\quad \text{in }\Omega\setminus\Gamma_0.
\end{equation}
In the following we let
\begin{equation}\label{hp_la0_simple}
\la_{0,n_0} \text{ be a simple eigenvalue of problem \eqref{prob_Aharonov-Bohm_0}}   \end{equation}
and 
\begin{equation}\label{def_u0}
u_0\text{ be an eigenfunction of \eqref{prob_Aharonov-Bohm_0} associated to $\lambda_{0,n_0}$ such that	$\norm{u_0}_{L^2(\Omega,\C)}=1$.}
\end{equation}
If $\rho=\frac12$, in view of Remark \ref{rem:propertyP} it is not restrictive to  assume also 
 that $u_0$ satisfies \eqref{eq:propertyP}.
Let
\begin{equation}\label{def_v0_w0}
v_0:=\mathop{\rm{Re}}(e^{-i\Theta_0} u_0), 
\quad \text{ and } \quad 
w_0:= \mathop{\rm{Im}}(e^{-i\Theta_0} u_0).
\end{equation}
Then $(v_0,w_0)$ and $(-w_0,v_0)$ solve \eqref{prob_eigenvalue_gauged} with $\e=0$ and $\lambda=\la_{n,0}$. 
In particular, if $\la_{n,0}$ is considered as an eigenvalue of \eqref{prob_eigenvalue_gauged}, it has multiplicity $2$. In general,
the limit eigenvalue problem \eqref{prob_Aharonov-Bohm_0} and \eqref{prob_eigenvalue_gauged} with $\e=0$ share the same eigenvalues and the eigenspaces match each other through  the  multiplication by the  phase $e^{i\Theta_0}$ and the doubling of the eigenfunctions 
 $e^{i\Theta_0}(v+iw)$ into $(v,w)$ and $(-w,v)$.
See Section \ref{subsec_equivalent_Aharonov_Bohm} for  details. 

 Recalling the definition of $b_j$, $d_j$ in \eqref{def_bj_dj}, for every $\e\in(0,1]$ we define $L_\e: \mc{H}_1\times \mc{H}_1 \to\R$ as
\begin{align}\label{def_Le}
L_\e(\varphi,\psi):&= 
\sum_{j=1}^{k}(b_j-1)\int_{S^j_\e}[ \nabla v_0 \cdot \nu^j	\gamma_+^j (\varphi)+ \nabla w_0 \cdot \nu^j\gamma_+^j (\psi)] \, dS \\
\notag&\quad -\sum_{j=1}^{k}d_j\int_{S^j_\e}[ \nabla v_0 \cdot \nu^j\gamma_+^j (\psi)- \nabla w_0 \cdot \nu^j	\gamma_+^j (\varphi)]\, dS,
\end{align}
and $J_\e: \mc{H}_\e \times \mc{H}_\e\to\R$ as
\begin{equation}\label{def_Je}
J_\e(\varphi,\psi)
:=\frac{1}{2}\int_{\Omega \setminus \Gamma_\e} (|\nabla \varphi|^2+|\nabla \psi|^2) \, dx -L_\e(\varphi,\psi).
\end{equation}
By standard minimization arguments, for every $\e\in(0,1]$ there exists a unique $(V_\e,W_\e)\in\mathcal H_\e\times \mathcal H_\e$ such that 
\begin{equation}\label{def_VeWe}
\begin{cases}
(V_\e -v_0,W_\e-w_0)\in \widetilde{\mathcal H}_\e,\\
J_\e(V_\e,W_\e)=  \min\left\{J_\e(\varphi,\psi): (\varphi,\psi)\in \mathcal H_\e\times \mathcal H_\e \text{ and }(\varphi -v_0,\psi-w_0)  \in  \widetilde{\mc{H}}_\e \right\},
\end{cases}
\end{equation}
see  Proposition \ref{prop_potential}.

Our first main result states that the eigenvalue  variation $\la_{\e,n_0} - \la_{0, n_0}$ admits the following asymptotic expansion, as $\e\to0^+$, in terms of the quantities $L_\e(v_0,w_0)$ and  
\begin{equation}\label{def_Ee}
\mathcal E_\e:=J_\e(V_\e,W_\e).
\end{equation} 
\begin{theorem}\label{theorem_asymptotic_not_precise}
Suppose that \eqref{hp_cirulation} and \eqref{hp_la0_simple} hold and let $(v_0,w_0)$ be as in \eqref{def_v0_w0} with $u_0$ as in \eqref{def_u0} (and \eqref{eq:propertyP} if $\rho=\frac12$). Then 
\begin{equation}\label{eq_asymptotioc_eigenvlaues_not_precise}
\la_{\e,n_0}-\la_{0,n_0}=2(\mc{E}_\e+L_\e(v_0,w_0))+o(\|(V_\e,W_\e)\|^{2}_{\mc{H}_\e \times \mc{H}_\e})
\quad \text{as } \e \to 0^+,
\end{equation}
where $\mc{E}_\e$ and $(V_\e,W_\e)$ are as in \eqref{def_Ee} and \eqref{def_VeWe} respectively.
\end{theorem}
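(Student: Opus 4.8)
The plan is to run the whole argument in the equivalent real system \eqref{prob_eigenvalue_gauged}, where, by the doubling described after that system, the limit eigenvalue $\la_{0,n_0}$ has the two–dimensional eigenspace $E_0=\mathrm{span}\{(v_0,w_0),(-w_0,v_0)\}$ and, likewise, $\la_{\e,n_0}$ has the two–dimensional eigenspace $E_\e=\mathrm{span}\{(v_\e,w_\e),(-w_\e,v_\e)\}$, spanned by a normalized real eigenfunction $(v_\e,w_\e)$ of \eqref{prob_eigenvalue_gauged} and its rotation, which is admissible by Remark \ref{remark_psi_varphi}. Since $\la_{0,n_0}$ is simple for \eqref{prob_Aharonov-Bohm_0} it is isolated, so by the continuity \eqref{limit_lae_la0} there is, for small $\e$, a fixed spectral gap separating $\la_{\e,n_0}$ from the rest of the spectrum of \eqref{prob_eigenvalue_gauged}. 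First I would record this gap and fix the $L^2$–normalization and the phase of $(v_\e,w_\e)$ so that $(v_\e,w_\e)\to(v_0,w_0)$ in $\mc{H}_\e\times\mc{H}_\e$ as $\e\to0^+$, which follows from the variational convergence of the constrained problems together with the spectral isolation.

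The core is an exact variational identity for the corrector. Since $((V_\e,W_\e)-(v_0,w_0))\in\widetilde{\mc{H}}_\e$, the Euler--Lagrange equation associated with \eqref{def_VeWe} reads $\int_{\Omega\setminus\Gamma_\e}(\nabla V_\e\cdot\nabla\varphi+\nabla W_\e\cdot\nabla\psi)\,dx=L_\e(\varphi,\psi)$ for all $(\varphi,\psi)\in\widetilde{\mc{H}}_\e$. Testing it with $(V_\e-v_0,W_\e-w_0)$, expanding \eqref{def_Je}--\eqref{def_Ee}, and using that $\int_{\Omega\setminus\Gamma_\e}(|\nabla v_0|^2+|\nabla w_0|^2)\,dx=\la_{0,n_0}$ (valid because $\Gamma_\e$ and $\Gamma_0$ differ by a null set and $(v_0,w_0)$ is an $L^2$–normalized eigenfunction of the limit problem), a direct computation yields
\begin{equation*}
\int_{\Omega\setminus\Gamma_\e}\big(|\nabla(v_0-V_\e)|^2+|\nabla(w_0-W_\e)|^2\big)\,dx=\la_{0,n_0}-2\big(\mc{E}_\e+L_\e(v_0,w_0)\big).
\end{equation*}
Thus the admissible pair $T_\e:=(v_0-V_\e,w_0-W_\e)\in\widetilde{\mc{H}}_\e$ has a Dirichlet energy that encodes precisely the quantity to be matched, and $T_\e\to(v_0,w_0)\in E_0$.

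I would then connect $\la_{\e,n_0}$ to this energy through a Green–type identity on the cracks. Computing $\int_{\Omega\setminus\Gamma_\e}\nabla(v_\e,w_\e)\cdot\nabla(v_0,w_0)\,dx$ in two ways, integrating by parts onto $(v_0,w_0)$ and onto $(v_\e,w_\e)$ and invoking the two equations, and exploiting that both pairs satisfy the transmission conditions $R^j=I^j=0$ on functions \emph{and} normal derivatives along $\Gamma_0^j$ (so that those crack contributions cancel, this being exactly the self-adjointness encoded in \eqref{prob_eigenvalue_gauged}), one is left with
\begin{equation*}
(\la_{\e,n_0}-\la_{0,n_0})\int_\Omega\big(v_0v_\e+w_0w_\e\big)\,dx=\sum_{j=1}^k\int_{S_\e^j}\big(\text{boundary terms on }S_\e^j\big),
\end{equation*}
where the right-hand side involves traces and normal derivatives of the two eigenfunctions only on $S_\e^j=\Gamma_\e^j\setminus\Gamma_0^j$, matching the structure of $L_\e$ in \eqref{def_Le}. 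Since $\int_\Omega(v_0v_\e+w_0w_\e)\,dx\to 1$, dividing gives $\la_{\e,n_0}-\la_{0,n_0}$ as those crack integrals up to a factor $1+o(1)$. Finally I would replace $(v_\e,w_\e)$ on $S_\e^j$ by its leading description through the corrector: the jumps and normal-derivative jumps of $(v_\e,w_\e)$ across $S_\e^j$ are, to higher order, those of $T_\e$, governed by the Euler--Lagrange equation above; substituting and using the energy identity of the previous step turns the crack integrals into exactly $2(\mc{E}_\e+L_\e(v_0,w_0))$, plus a remainder.

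The hard part is the quantitative control of this remainder to the precision $o(\|(V_\e,W_\e)\|^2_{\mc{H}_\e\times\mc{H}_\e})$, where two effects enter at the same order and cannot be treated separately: the replacement of $(v_\e,w_\e)$ by its corrector-based description on the cracks, and the normalization $\int_\Omega(v_0v_\e+w_0w_\e)\,dx=1+O(\|(V_\e,W_\e)\|)$, whose first-order correction has the size of the leading term and must be shown, through the same integration by parts that produced $L_\e$, to be absorbed rather than dropped. Making this rigorous requires sharp a priori bounds on $(V_\e,W_\e)$ in $\mc{H}_\e\times\mc{H}_\e$, a rate for $(v_\e,w_\e)\to(v_0,w_0)$, and the compactness of the trace operators on $\mc{H}_\e$ recorded around \eqref{def_traces}; controlling the projection of $T_\e$ onto the orthogonal complement of $E_\e$ by means of the spectral gap is what upgrades the two-sided estimate to the claimed error. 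It is exactly this step, essentially trivial in the half-integer case where real eigenfunctions and nodal structure simplify the geometry, that becomes delicate here, since the eigenfunctions remain genuinely complex and the poles are isolated zeros.
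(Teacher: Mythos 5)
Your setup and your first identity are sound, and in fact your energy identity
\begin{equation*}
\int_{\Omega\setminus\Gamma_\e}\big(|\nabla(v_0-V_\e)|^2+|\nabla(w_0-W_\e)|^2\big)\,dx=\la_{0,n_0}-2\big(\mc{E}_\e+L_\e(v_0,w_0)\big)
\end{equation*}
follows correctly from testing the Euler--Lagrange equation \eqref{eq_potential} with $(V_\e-v_0,W_\e-w_0)$ and is equivalent to the paper's central algebraic claim \eqref{claim_main_theorem}. Your overall skeleton (corrector pair, projection onto the perturbed two-dimensional eigenspace, spectral gap) is also the paper's skeleton, inherited from the approach of \cite{AFHL}. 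However, there is a genuine gap at the two places where the theorem is actually hard. First, your ``Green-type identity on the cracks'' pairs $(v_\e,w_\e)$ with $(v_0,w_0)$ by integration by parts and leaves boundary terms involving traces \emph{and normal derivatives of the perturbed eigenfunctions} on $S_\e^j$; these are not uniformly controlled as $\e\to0^+$, since $\nabla v_\e,\nabla w_\e$ are singular at the moving poles $a_\e^j$ and no uniform-in-$\e$ regularity up to $S_\e^j$ is available. The paper's identity \eqref{eq:hat_ve_we} is arranged precisely to avoid this: one tests the equation \eqref{eq_Ye_Ze} satisfied by $(Y_\e,Z_\e)=(v_0-V_\e,w_0-W_\e)$ against the normalized projection $(\hat v_\e,\hat w_\e)\in\widetilde{\mc{H}}_\e$, so that the only crack integrals ever appearing involve $\nabla v_0,\nabla w_0$ on $S_\e^j$, whose behavior at $0$ is known from Propositions \ref{prop_vw_asympotic_not_1/2} and \ref{prop_vw_asympotic_1/2}.

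Second, and more seriously, the entire quantitative content is asserted rather than proved. Your statement that the jumps of $(v_\e,w_\e)$ across $S_\e^j$ are ``to higher order, those of $T_\e$, governed by the Euler--Lagrange equation above'' is essentially the estimate $\|(Y_\e,Z_\e)-\Pi_\e(Y_\e,Z_\e)\|_{\mc{H}_\e\times\mc{H}_\e}=o(\|(V_\e,W_\e)\|_{\mc{H}_\e\times\mc{H}_\e})$, i.e.\ \eqref{eq_v0w0_VeWe_Pi}; the Euler--Lagrange equation alone does not yield it. In the paper this is Steps~1--2 of the proof of Theorem \ref{theorem_asymptotic_with_eigenfunction}: one needs the abstract spectral-distance inequality of Proposition \ref{prop_spectral}(iii), applied to $\mc{F}_\e$ with trial pair $(Y_\e,Z_\e)$ to get the a priori bound $|\la_\e-\la_0|=o(\|(V_\e,W_\e)\|_{\mc{H}_\e\times\mc{H}_\e})$, and then to the restricted operator $\widetilde{\mc{F}}_\e$ on $N_\e=\ker\Pi_\e$ (whose spectrum has a uniform gap at $\mu_\e$) to control the component of $(Y_\e,Z_\e)$ orthogonal to the perturbed eigenspace; both steps rest crucially on Proposition \ref{prop_norm_L2_norm_nabla}, the $L^2$-smallness of the corrector relative to its energy, which you never invoke. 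Without the bound on $|\la_\e-\la_0|$ the cross-term $(\la_\e-\la_0)\cdot o(\|(V_\e,W_\e)\|_{\mc{H}_\e\times\mc{H}_\e})$ arising from the normalization factor $\int_\Omega(Y_\e\hat v_\e+Z_\e\hat w_\e)\,dx=1+o(\|(V_\e,W_\e)\|_{\mc{H}_\e\times\mc{H}_\e})$ cannot be absorbed into $o(\|(V_\e,W_\e)\|^2_{\mc{H}_\e\times\mc{H}_\e})$, which is exactly the precision the theorem requires. You correctly flag this as ``the hard part,'' but flagging it is not closing it: as written, the proposal establishes the exact leading identity but not the error estimate, so the theorem is not proved.
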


\begin{remark}
 If $u_0$ satisfies \eqref{def_u0}, then $e^{i\tau}u_0$ satisfies \eqref{def_u0} as well, for every $\tau\in\R$. Letting $v_0$ and $w_0$ be as in \eqref{def_v0_w0}, we have $e^{i\tau}u_0=e^{i\Theta_0} (v_{\tau}+i w_{\tau})$, with $v_\tau=v_0\cos (\tau)-w_0\sin(\tau)$ and $w_\tau=v_0\sin (\tau)+w_0\cos(\tau)$. We define  $L_{\e,\tau}$, $J_{\e,\tau}$, and $\mathcal E_{\e,\tau}$  as in \eqref{def_Le},  \eqref{def_Je}, and \eqref{def_Ee}, replacing $v_0$ and $w_0$  with $v_\tau$  and $w_\tau$, respectively. 
 We observe that 
 \begin{equation*}
     L_{\e,\tau}(\varphi,\psi)=L_\e(\varphi\cos (\tau)+\psi\sin(\tau),\psi\cos(\tau)-\varphi\sin(\tau))
 \end{equation*} 
 and 
 \begin{equation*}
     J_{\e,\tau}(\varphi,\psi)=J_\e(\varphi\cos (\tau)+\psi\sin(\tau),\psi\cos(\tau)-\varphi\sin(\tau)).
 \end{equation*}  
In particular, $L_{\e,\tau}(v_\tau,w_\tau)=L_\e(v_0,w_0)$ and $\mathcal E_{\e,\tau}=\mathcal E_\e$ for all $\tau\in\R$. 
 Hence, the coefficient $\mc{E}_\e+L_\e(v_0,w_0)$ appearing in the expansion \eqref{eq_asymptotioc_eigenvlaues_not_precise} does not depend on the choice of the eigenfunction $u_0$ in \eqref{def_u0}.
\end{remark}

Under assumption \eqref{hp_cirulation}, it is possible to describe the asymptotic behavior of $\mc{E}_\e$ as $\e \to 0^+$  in terms of the vanishing order of $v_0$ and $w_0$ at the collision point $0$. 
In order to do so we define,
letting $\{\alpha^j\}_{j=1,\dots,k}$ and $\{\rho^j\}_{j=1,\dots,k}$ be as in \eqref{def_aj} and \eqref{def_AB_vector_multi} respectively, 
\begin{equation} \label{def_f}
f(t): [0,2\pi) \to \R, \quad f(t):=\sum_{j=1}^k \rho^j \chi_{[\alpha^j+\pi,2 \pi)}(t),
\end{equation}
where 
\begin{equation}\label{eq:chi_step_func}
  \rchi_{[\alpha^j+\pi,2 \pi)}(t):=
\begin{cases}
  0, & \text {if } t \in [0,\alpha^j+\pi),\\
  1, & \text {if } t \in [\alpha^j+\pi,2 \pi).
\end{cases}
\end{equation}
As proved in  Proposition \ref{prop_vw_asympotic_not_1/2}, if $\rho \neq \frac{1}{2}$,
there exist $m\in\Z$, $\beta\in(0,+\infty)$, and $\gamma\in\big[0,\frac{2\pi}{|m+\rho|}\big)$
such that, as $\delta \to 0^+$,
\begin{equation}\label{limit_v_not_1/2}
\delta^{-|m+\rho|} v_0\big(\delta\cos t,\delta\sin t\big) \to 
\beta\cos(2\pi f(t)+(m+\rho)(\gamma-t))
\end{equation}
and 
\begin{equation}\label{limit_w_not_1/2}
\delta^{-|m+\rho|} w_0\big(\delta\cos t,\delta\sin t\big) \to 
\beta\sin(2\pi f(t)+(m+\rho)(\gamma-t))
\end{equation}
in $C^{1,\tau}\big([0,2\pi]\setminus\{\alpha^j+\pi\}_{j=1}^{k},\R\big)$ for all $\tau \in (0,1)$.
On the other hand, if $\rho =\frac{1}{2}$,   
we choose $u_0$ satisfying \eqref{eq:propertyP}, so that, in view of  Proposition \ref{prop_vw_asympotic_1/2}, there exist
 $m \in \mb{N}$, $\beta\in(0,+\infty)$, and $\gamma\in\big[0,\frac{4\pi}{2m+1}\big)$ 
 such that, as $\delta\to0^+$,
\begin{equation}\label{limit_v_1/2}
\delta^{-(m+\frac{1}{2})} v_0\big(\delta\cos t,\delta\sin t\big)\to 
\beta \cos(2\pi f(t))\cos\big((m+\tfrac 12)(\gamma+t)\big)
\end{equation}
and 
\begin{equation}\label{limit_w_1/2}
\delta^{-(m+\frac{1}{2})} w_0\big(\delta\cos t,\delta\sin t\big)\to 
\beta \sin(2\pi f(t))\cos\big((m+\tfrac 12)(\gamma+t)\big)
\end{equation}  
in $C^{1,\tau}\big([0,2\pi]\setminus\{\alpha^j+\pi\}_{j=1}^{k},\R\big)$ for all $\tau \in (0,1)$.
We introduce the corresponding homogeneous functions 
\begin{align}\label{def_Psi_rho}
&\Phi_0(x)=\Phi_0(r\cos t,r\sin t)=
\begin{cases}
    \beta \,r^{|m+\rho|}\cos(2\pi f(t)+(m+\rho)(\gamma-t)),&\text{if }\rho\neq\frac12,\\[3pt]
    \beta\, r^{m+\frac12}\cos(2\pi f(t))\cos\big((m+\tfrac 12)(\gamma+t)\big),&\text{if }\rho=\frac12,
\end{cases}\\
\label{def_Phi_rho}
&\Psi_0(x)=\Psi_0(r\cos t,r\sin t)=
\begin{cases}
    \beta \,r^{|m+\rho|}\sin(2\pi f(t)+(m+\rho)(\gamma-t)),&\text{if }\rho\neq\frac12,\\[3pt]
    \beta\, r^{m+\frac12}\sin(2\pi f(t))\cos\big((m+\tfrac 12)(\gamma+t)\big),&\text{if }\rho=\frac12,
\end{cases}
\end{align}
where $\beta,m,\gamma$ are as in \eqref{limit_v_not_1/2}-\eqref{limit_w_not_1/2} and \eqref{limit_v_1/2}-\eqref{limit_w_1/2}.

Let us define the functional space 
\begin{equation}\label{def_tilde_X}
\widetilde{\mc{X}} :=\left\{\!\!\!
\begin{array}{ll}
(\varphi,\psi) \in L^1_{\rm loc}(\R^2)\times L^1_{\rm loc}(\R^2): 
\!\!\!&\varphi,\psi \in H^1(D_r \setminus \Gamma_1) \text{ for all } r>0,\\[3pt]
&\nabla\varphi,\nabla \psi\in  L^2(\R^2 \setminus \Gamma_1,\R^2), \\[3pt]
&R^j(\varphi,\psi)=I^j(\varphi,\psi)=0 \text{ on } \Gamma^j_0 \text{ for all } 1\leq j\leq k
\end{array}
\right\}
\end{equation}
and its closed subspace 
\begin{equation}\label{def_tilde_H}
 \widetilde{\mc{H}} :=\{(\varphi,\psi) \in \widetilde{\mc{X}}: R^j(\varphi,\psi)=I^j(\varphi,\psi)=0 \text{ on } S^j_1 \text{ for all } 1\leq j\leq k\} . 
\end{equation}
We also consider the linear functional 
\begin{multline}\label{def_L}
L: \widetilde{\mc{X}} \to \R, \quad  L(\varphi,\psi)
= \sum_{j=1}^{k}(b_j-1)\int_{S^j_1}[ \nabla \Phi_0 \cdot \nu^j	\gamma_+^j (\varphi)+ \nabla \Psi_0 \cdot \nu^j\gamma_+^j (\psi)] \, dS \\
-\sum_{j=1}^{k}d_j\int_{S^j_1}[ \nabla \Phi_0 \cdot \nu^j\gamma_+^j (\psi)- \nabla \Psi_0 \cdot \nu^j	\gamma_+^j (\varphi)]\, dS
\end{multline}
and the quadratic one
\begin{equation}\label{def_J}
J: \widetilde{\mc{X}} \to \R, \quad  J(\varphi,\psi)= \frac{1}{2}\int_{\R^2\setminus \Gamma_1} (|\nabla \varphi|^2+|\nabla \psi|^2) \, dx -L(\varphi,\psi).
\end{equation}
It is worth noticing that  $L$ is actually well-defined in $H^1(D_r \setminus \Gamma_1)\times H^1(D_r \setminus \Gamma_1)$ for any $r>1$.

Let $\eta \in C^{\infty}_c(\R^2)$  be a radial cut-off function   such that 
\begin{equation}\label{def_eta}
\begin{cases}
0\le \eta(x) \le 1 \text{ for any } x \in \R^2,\\  
\eta(x)=1 \text{ if } x \in D_1, \quad  \eta(x)=0 \text{ if } x \in \R^2 \setminus D_2,\\
|\nabla \eta| \le 2  \text{ if } x \in D_2\setminus D_1.
\end{cases}
\end{equation}
By standard minimization methods,  there exists a unique  $(\widetilde{V},\widetilde{W})\in \widetilde{\mathcal X}$ such that 
\begin{equation}\label{min_prob_tilde}
\begin{cases}
 (\widetilde{V},\widetilde{W})-\eta(\Phi_0,\Psi_0) \in \widetilde{\mc{H}},\\
 J(\widetilde{V},\widetilde{W})=\min\Big\{J(\varphi,\psi): (\varphi,\psi) \in \widetilde{\mc{X}} \text{ and }
(\varphi,\psi)-\eta(\Phi_0,\Psi_0) \in \widetilde{\mc{H}}\Big\},
\end{cases}
\end{equation}
see Proposition \ref{prop_existence_tilde_VW}.

A blow-up analysis allows us to study the asymptotic behavior of the quantities $\mc{E}_\e$ and $L_\e(v_0,w_0)$ appearing in the expansion provided by Theorem \ref{theorem_asymptotic_not_precise}, consequently yielding the following more explicit result.
\begin{theorem}\label{theorem_asymptotic_precise}
Suppose that \eqref{hp_cirulation} and \eqref{hp_la0_simple} hold. Let $(v_0,w_0)$ be as in \eqref{def_v0_w0}, with $u_0$ as in \eqref{def_u0} (and $u_0$ satisfying the additional assumption \eqref{eq:propertyP} in the case $\rho=\frac12$). Let $m \in \mathbb{Z}$ be such that $|m+\rho|$ is the vanishing order of $v_0$ and $w_0$ at $0$ as in \eqref{limit_v_not_1/2}--\eqref{limit_w_not_1/2}   
or \eqref{limit_v_1/2}--\eqref{limit_w_1/2} (with $m\in\N$ in the case $\rho=\frac12$).  Then 
\begin{itemize}
\item [(i)]   $\lim_{\e \to 0^+}\e^{-2|m+\rho|} \mc{E}_\e=\mc{E}$, where 
\begin{equation}\label{def_E}
\mc{E}:=J(\widetilde{V},\widetilde{W})\text{ and $\widetilde{V},\widetilde{W}$ are as in \eqref{min_prob_tilde}.}
\end{equation}
\item [(ii)] $\la_{\e,n_0}-\la_{0,n_0}=2\e^{2|m+\rho|}\big(\mc{E}+L(\Phi_0,\Psi_0)\big)+o(\e^{2|m+\rho|})$ as  $\e \to 0^+$,
where  $\Phi_0$ and $\Psi_0$ are defined  in \eqref{def_Psi_rho} and  \eqref{def_Phi_rho}, respectively.
\end{itemize}
\end{theorem}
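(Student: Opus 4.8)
The plan is to prove both statements by a blow-up analysis at scale $\e$ around the collision point, reducing the $\e$-dependent minimization \eqref{def_VeWe} to the limiting minimization \eqref{min_prob_tilde}. For $(\varphi,\psi)\in\mc H_\e\times\mc H_\e$ I set
\[
\widehat\varphi(y):=\e^{-|m+\rho|}\varphi(\e y),\qquad \widehat\psi(y):=\e^{-|m+\rho|}\psi(\e y),\qquad y\in \e^{-1}\Omega .
\]
A change of variables, using that $\Gamma_\e$ and $S_\e^j$ are mapped onto $\Gamma_1$ and $S_1^j$, that $\Phi_0,\Psi_0$ are homogeneous of degree $|m+\rho|$, and that the blow-up limits \eqref{limit_v_not_1/2}--\eqref{limit_w_1/2} give $\e^{-|m+\rho|}v_0(\e\,\cdot)\to\Phi_0$, $\e^{-|m+\rho|}w_0(\e\,\cdot)\to\Psi_0$ in $C^{1,\tau}_{\rm loc}(\R^2\setminus\Gamma_1)$, yields the scaling identities $\int_{\Omega\setminus\Gamma_\e}|\nabla\varphi|^2\,dx=\e^{2|m+\rho|}\int_{\e^{-1}\Omega\setminus\Gamma_1}|\nabla\widehat\varphi|^2\,dy$ and $L_\e(\varphi,\psi)=\e^{2|m+\rho|}\widehat L_\e(\widehat\varphi,\widehat\psi)$, where $\widehat L_\e$ is defined as in \eqref{def_L} with $\Phi_0,\Psi_0$ replaced by the rescaled $\e^{-|m+\rho|}v_0(\e\,\cdot)$, $\e^{-|m+\rho|}w_0(\e\,\cdot)$. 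Hence $\e^{-2|m+\rho|}J_\e(\varphi,\psi)=\widehat J_\e(\widehat\varphi,\widehat\psi)$ for the correspondingly rescaled functional, the admissible classes are in bijection, and the constraint $(\varphi-v_0,\psi-w_0)\in\widetilde{\mc H}_\e$ becomes a transmission condition on $\Gamma_1$ for $(\widehat\varphi,\widehat\psi)$ minus the rescaled $(v_0,w_0)$.

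To prove $(i)$ I let $(\widehat V_\e,\widehat W_\e)$ be the rescaling of the minimizer $(V_\e,W_\e)$, so that $\e^{-2|m+\rho|}\mc E_\e=\widehat J_\e(\widehat V_\e,\widehat W_\e)$ is the minimum of $\widehat J_\e$ over the rescaled class. First I would establish a uniform bound $\|\nabla\widehat V_\e\|_{L^2(\e^{-1}\Omega\setminus\Gamma_1)}+\|\nabla\widehat W_\e\|_{L^2(\e^{-1}\Omega\setminus\Gamma_1)}\le C$, using the preliminary estimates on $\mc E_\e$ from Section \ref{sec_propoerties_Ee} together with the Hardy-type inequality of Subsection \ref{subsection_hardy_ineq}, which controls the weighted $L^2$-norm on $\R^2\setminus\Gamma_1$ and guarantees that $\widetilde{\mc X}$ in \eqref{def_tilde_X} is the correct setting. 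This yields, up to a subsequence, a weak limit $(\widehat V_\e,\widehat W_\e)\rightharpoonup(\widehat V,\widehat W)$ with $\nabla\widehat V,\nabla\widehat W\in L^2(\R^2\setminus\Gamma_1)$ and strong $H^1_{\rm loc}$ convergence. Since $\e^{-|m+\rho|}v_0(\e\,\cdot)\to\Phi_0$ and $\e^{-|m+\rho|}w_0(\e\,\cdot)\to\Psi_0$ in $C^{1,\tau}_{\rm loc}$, the transmission conditions on $\Gamma_0^j$ pass to the limit by compactness of the traces, while on the segments $S_1^j$ (where $\eta\equiv1$) the prescribed jump of the rescaled $(v_0,w_0)$ converges to that of $(\Phi_0,\Psi_0)=\eta(\Phi_0,\Psi_0)$; thus $(\widehat V,\widehat W)-\eta(\Phi_0,\Psi_0)\in\widetilde{\mc H}$ and $(\widehat V,\widehat W)$ is admissible for \eqref{min_prob_tilde}.

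Next I would identify $(\widehat V,\widehat W)=(\widetilde V,\widetilde W)$ and prove convergence of the energies. Weak lower semicontinuity of the Dirichlet term and the $C^{1,\tau}_{\rm loc}$ convergence entering $\widehat L_\e$ give $\liminf_\e\widehat J_\e(\widehat V_\e,\widehat W_\e)\ge J(\widehat V,\widehat W)\ge J(\widetilde V,\widetilde W)=\mc E$. For the reverse bound I would construct a recovery sequence by un-rescaling $(\widetilde V,\widetilde W)$ and truncating it to obtain admissible competitors for \eqref{def_VeWe}, using $\nabla\widetilde V,\nabla\widetilde W\in L^2(\R^2\setminus\Gamma_1)$ and the Hardy inequality to show that the energy outside large balls is negligible; this gives $\limsup_\e \e^{-2|m+\rho|}\mc E_\e\le\mc E$. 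Uniqueness of the minimizer in \eqref{min_prob_tilde} then forces convergence of the whole family, proving $(i)$, and as a by-product $\|(V_\e,W_\e)\|_{\mc H_\e\times\mc H_\e}^2=\e^{2|m+\rho|}\|(\widehat V_\e,\widehat W_\e)\|^2\le C\,\e^{2|m+\rho|}$. Finally, $(ii)$ follows by inserting these asymptotics into Theorem \ref{theorem_asymptotic_not_precise}: the scaling identity for $L_\e$ and the $C^{1,\tau}_{\rm loc}$ convergence of the rescaled $(v_0,w_0)$ give $L_\e(v_0,w_0)=\e^{2|m+\rho|}(L(\Phi_0,\Psi_0)+o(1))$, so with $(i)$ one has $\mc E_\e+L_\e(v_0,w_0)=\e^{2|m+\rho|}(\mc E+L(\Phi_0,\Psi_0)+o(1))$; since the by-product bound gives $o(\|(V_\e,W_\e)\|^2_{\mc H_\e\times\mc H_\e})=o(\e^{2|m+\rho|})$, the expansion \eqref{eq_asymptotioc_eigenvlaues_not_precise} becomes exactly $(ii)$.

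The main obstacle is the blow-up step of $(i)$ on the unbounded domain $\R^2\setminus\Gamma_1$: because $\Phi_0,\Psi_0$ do not decay, one must work with the cut-off $\eta(\Phi_0,\Psi_0)$ rather than $(\Phi_0,\Psi_0)$ itself, and the delicate points are the uniform a priori bound and compactness of the blow-up sequence, for which the Hardy-type inequality of Subsection \ref{subsection_hardy_ineq} is indispensable in order to characterise $\widetilde{\mc X}$ and control the behaviour at infinity, and the construction of the recovery sequence giving the matching $\limsup$, where the tails of $(\widetilde V,\widetilde W)$ outside large balls must be shown to carry vanishing energy after un-rescaling. The passage of the affine constraint to the limit — matching the converging prescribed jumps of the rescaled $(v_0,w_0)$ on $S_1^j$ with those of $\eta(\Phi_0,\Psi_0)$ — also requires care and rests on the strong convergence \eqref{limit_v_not_1/2}--\eqref{limit_w_1/2}.
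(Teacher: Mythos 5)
Your proposal is correct in substance and follows the same overall architecture as the paper: rescale the minimizers at scale $\e$, obtain a uniform bound in $\widetilde{\mc{X}}$ (for which the Hardy inequality of Subsection \ref{subsection_hardy_ineq}, via the trace bound \eqref{sup_tilde_H_bounded}, is indeed the essential tool, combined with the upper bound $\mc{E}_\e\le C_1\e^{2|m+\rho|}$ of Proposition \ref{prop_Ee_lower_upper_estimates}), pass the transmission constraints to the limit using the strong convergences \eqref{limit_nabla_tilde_V0W0_Lp}--\eqref{limit_tilde_V0W0_H1}, identify the limit with $(\widetilde V,\widetilde W)$ by uniqueness, and feed the resulting asymptotics into Theorem \ref{theorem_asymptotic_not_precise} exactly as in the paper's proof of (ii). Where you genuinely diverge is the convergence-of-energies step: you propose a $\Gamma$-convergence style liminf/limsup argument with a recovery sequence obtained by un-rescaling and truncating $(\widetilde V,\widetilde W)$, whereas the paper avoids any recovery construction by testing the two Euler--Lagrange identities (\eqref{eq_potetial_tilde} with $(\widetilde V-\eta\Phi_0,\widetilde W-\eta\Psi_0)$, and the scaled equation \eqref{eq_tilde_VeWe} with $(\widetilde V_\e-\eta\widetilde V_{0,\e},\widetilde W_\e-\eta\widetilde W_{0,\e})$) and passing to the limit in the resulting identities, which yields convergence of the $\widetilde{\mc{X}}$-norms and hence strong convergence directly (Proposition \ref{prop_blow_up}); the paper also records the sharp a priori estimate $\mc{E}_\e=O(\e^{2|m+\rho|})$ separately via the dual sup-characterization of Proposition \ref{prop_Ee_equiv}, which your scheme obtains only a posteriori. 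Your route works but requires one repair you only hint at: the un-rescaled truncation of $(\widetilde V,\widetilde W)$ carries the jumps of $\eta(\Phi_0,\Psi_0)$ across $S^j_\e$, while admissibility in \eqref{def_VeWe} prescribes the jumps of $(v_0,w_0)$; the competitor must therefore be corrected by a term of the form $\eta_\e\big((v_0,w_0)-\e^{|m+\rho|}(\Phi_0,\Psi_0)(\cdot/\e)\big)$, whose energy is $o(\e^{2|m+\rho|})$ precisely by \eqref{limit_tilde_V0W0_H1}. The trade-off is that your argument is more modular and standard (lower semicontinuity plus recovery), while the paper's equation-testing is shorter and delivers strong $\widetilde{\mc{X}}$-convergence of the blow-up family in one stroke, which is then reused in the eigenfunction analysis of Theorem \ref{theorem_blow_up_eigenfunctions}.
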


In the case $\rho=\frac{1}{2}$, it is possible to exhibit configurations of poles such that $\mc{E}+L(\Phi_0,\Psi_0)>0$, and other configurations for which $\mc{E}+L(\Phi_0,\Psi_0)<0$;
in these situations, Theorem \ref{theorem_asymptotic_precise}-(ii) identifies the sharp  vanishing order
of the eigenvalue variation $\la_{\e,n_0}-\la_{0,n_0}$ as $\e\to0^+$.

\begin{proposition}\label{prop_positive_negative}
Suppose that $\rho=\frac{1}{2}$. Let $v_0,w_0$ be as in \eqref{def_v0_w0}, 
with $u_0$ as in \eqref{def_u0} and \eqref{eq:propertyP},
and $m, \gamma$ be as in \eqref{limit_v_1/2}--\eqref{limit_w_1/2}. Assume that $k\le 2m+1$ and let $\alpha^j$ be as in \eqref{def_aj} for every $j=1, \dots,k$. 
\begin{itemize}
\item[(i)] 
If $\alpha^j \in  -\gamma+\frac{\pi}{2m+1}(1+2 \mathbb Z)$ for every $j=1,\dots,k$,
then 
\begin{equation}
\mc{E}<0 \quad \text{ and } \quad L(\Phi_0,\Psi_0)=0.
\end{equation}
In particular, $\la_{\e,n_0} < \la_{0,n_0}$ for sufficiently small $\e>0$. 
\item[(ii)] 
If $\alpha^j \in  -\gamma+\frac{2\pi}{2m+1}\mathbb Z$ for every $j=1,\dots,k$, 
then 
\begin{equation}
\mc{E}>0 \quad \text{ and } \quad L(\Phi_0,\Psi_0)=0.
\end{equation}
In particular, $\la_{\e,n_0} > \la_{0,n_0}$ for sufficiently small $\e>0$. 
\end{itemize}
\end{proposition}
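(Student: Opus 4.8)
The plan is to read off everything from the explicit homogeneous profiles $\Phi_0,\Psi_0$ in \eqref{def_Psi_rho}--\eqref{def_Phi_rho}, which for $\rho=\frac12$ share the common angular factor $\cos\big((m+\tfrac12)(\gamma+t)\big)$; the two hypotheses on the angles place each pole, respectively, at a zero or at an extremum of this factor. First I would record the elementary identities at $t=\alpha^j$: in case (i) one has $(m+\tfrac12)(\gamma+\alpha^j)\in\frac\pi2+\pi\mathbb{Z}$, so $\cos\big((m+\tfrac12)(\gamma+\alpha^j)\big)=0$, whereas in case (ii) one has $(m+\tfrac12)(\gamma+\alpha^j)\in\pi\mathbb{Z}$, so the factor equals $\pm1$ while its $t$-derivative $-(m+\tfrac12)\sin\big((m+\tfrac12)(\gamma+\alpha^j)\big)$ vanishes. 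Since the step function $f$ in \eqref{def_f} jumps only in the directions $\alpha^\ell+\pi$, and the standing assumptions force $\alpha^j\neq\alpha^\ell+\pi$ for all $j,\ell$, each profile is smooth across every segment $S^j_1$ and $f'(\alpha^j)=0$; the bound $k\le 2m+1$ merely guarantees that $k$ distinct admissible angles exist. From here $L(\Phi_0,\Psi_0)=0$ is immediate in both cases: in case (i) the traces $\gamma^j_+(\Phi_0)=\gamma^j_+(\Psi_0)=0$ on each $S^j_1$ kill every integrand in \eqref{def_L}, while in case (ii) the conormal derivatives $\nabla\Phi_0\cdot\nu^j=\nabla\Psi_0\cdot\nu^j=0$ on each $S^j_1$ make $L$ vanish as a functional on $\widetilde{\mathcal{X}}$.

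For the sign of $\mathcal{E}=J(\widetilde V,\widetilde W)$ in case (i), I would first observe that the vanishing of the traces, together with \eqref{def_R}--\eqref{def_I}, yields $R^j(\Phi_0,\Psi_0)=I^j(\Phi_0,\Psi_0)=0$ on each $S^j_1$, so that $\eta(\Phi_0,\Psi_0)\in\widetilde{\mathcal{H}}$. Consequently the affine constraint in \eqref{min_prob_tilde} collapses to $(\varphi,\psi)\in\widetilde{\mathcal{H}}$, and minimizing the quadratic functional $J=\frac12\|\nabla(\cdot)\|^2-L$ over the Hilbert space $\widetilde{\mathcal{H}}$ gives the closed form $\mathcal{E}=-\tfrac12\|L\|^2_{\widetilde{\mathcal{H}}^*}\le0$, strictly negative as soon as $L$ is not the zero functional on $\widetilde{\mathcal{H}}$. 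The latter holds because the coefficients $\nabla\Phi_0\cdot\nu^j$ and $\nabla\Psi_0\cdot\nu^j$ on $S^j_1$ are proportional to $\cos(2\pi f(\alpha^j))$ and $\sin(2\pi f(\alpha^j))$, which never vanish simultaneously; pairing these nonzero weights with suitable traces of elements of $\widetilde{\mathcal{H}}$ produces a pair with $L\neq0$, whence $\mathcal{E}<0$.

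In case (ii), since $L\equiv0$ the functional is simply $J=\frac12\|\nabla(\cdot)\|^2\ge0$, so $\mathcal{E}=\frac12\|\nabla(\widetilde V,\widetilde W)\|^2\ge0$ and it remains to exclude equality. Assuming $\nabla\widetilde V=\nabla\widetilde W=0$, the pair $(\widetilde V,\widetilde W)$ is constant on each connected component of $\R^2\setminus\Gamma_1$; because the two sides of the finite slit $S^j_1$ can be joined by a loop around its free endpoint $a^j$, it is moreover continuous across every $S^j_1$. Imposing $(\widetilde V,\widetilde W)-\eta(\Phi_0,\Psi_0)\in\widetilde{\mathcal{H}}$ on $S^j_1$ then reads $(1-b_j)(\widetilde V-\Phi_0)+d_j(\widetilde W-\Psi_0)=0$ together with its $I^j$ analogue; separating the constant part from the $r^{m+1/2}$-homogeneous part of $\Phi_0,\Psi_0$ forces $R^j(\Phi_0,\Psi_0)=I^j(\Phi_0,\Psi_0)=0$ on $S^j_1$, which by \eqref{def_bj_dj} amounts to $e^{2\pi i\rho^j}=1$, contradicting \eqref{eq:all-non-int}. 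Hence $\mathcal{E}>0$. In either case the stated inequality for $\la_{\e,n_0}-\la_{0,n_0}$ follows at once from Theorem \ref{theorem_asymptotic_precise}-(ii), since $L(\Phi_0,\Psi_0)=0$ reduces the leading coefficient to $\mathcal{E}$.

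I expect the two strictness statements to be the only delicate points. In case (i) the claim that $L$ does not vanish identically on $\widetilde{\mathcal{H}}$ needs a genuine, though routine, trace--extension construction producing an element of $\widetilde{\mathcal{H}}$ with a prescribed trace on one $S^{j}_1$ while respecting all jump conditions in \eqref{def_tilde_H}. In case (ii) the subtle ingredient is the topological fact that a curl-free element of $\widetilde{\mathcal{X}}$ is piecewise constant and continuous across the finite slits; once this is established the contradiction with $\rho^j\notin\mathbb{Z}$ is purely algebraic.
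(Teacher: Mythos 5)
Your proposal is correct and follows essentially the same route as the paper, whose proof factors the two strictness claims into Lemma \ref{lemma_positive_negative}: the trigonometric identities at $t=\alpha^j$, the vanishing of $L(\Phi_0,\Psi_0)$ in both cases, the collapse of the affine constraint in case (i), and the exclusion of a zero-energy minimizer in case (ii) are all there; your closed form $\mc{E}=-\tfrac12\norm{L}^2_{\widetilde{\mc{H}}^*}$ is equivalent to the paper's observation that $J(t(v,w))<0$ for suitable small $t$. One clarification on the point you defer as routine in case (i): nonvanishing of the weights $\nabla\Phi_0\cdot\nu^j$, $\nabla\Psi_0\cdot\nu^j$ alone does not yield $L\big|_{\widetilde{\mc{H}}}\not\equiv0$, since the functional could still vanish if $(b_{j_0}-1,d_{j_0})=(0,0)$; the paper's construction (bump functions on $\overline{\pi^{j_0}_+}$ supported near $\tfrac12 a^{j_0}$, extended to a pair in $\widetilde{\mc{H}}$, with two independent amplitudes $c_1,c_2$) shows that $L\big|_{\widetilde{\mc{H}}}\equiv0$ would force $(b_{j_0}-1)\cos(2\pi f)+d_{j_0}\sin(2\pi f)=0$ and $(b_{j_0}-1)\sin(2\pi f)-d_{j_0}\cos(2\pi f)=0$ at the relevant angle, hence $b_{j_0}=1$ and $d_{j_0}=0$, contradicting $\rho^{j_0}\notin\Z$ via \eqref{def_bj_dj} — so the ingredient \eqref{eq:all-non-int} must appear explicitly here too, not only in case (ii). In case (ii) your locally-constant argument is a correct and slightly more self-contained unpacking of what the paper compresses into Lemma \ref{lemma_positive_negative}-(ii) (namely $\eta(\Phi_0,\Psi_0)\notin\widetilde{\mc{H}}$, from which $\mc{E}>0$ follows); note you could even skip the loop-around-$a^j$ step, since a constant pair $(c_1,c_2)\in\widetilde{\mc{X}}$ already satisfies $R^j=I^j=0$ on $\Gamma^j_0$, and invertibility of $\left(\begin{smallmatrix}1-b_j & d_j\\ -d_j & 1-b_j\end{smallmatrix}\right)$, again by $\rho^j\notin\Z$, forces $c_1=c_2=0$ directly, after which the contradiction is exactly the paper's computation of $(\Phi_0,\Psi_0)\neq(0,0)$ on $S^{j_0}_1$.
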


The results of Subsection \ref{subsec_blow_up} also give us  the following insight concerning blow-up and convergence rate of eigenfunctions.
\begin{theorem}\label{theorem_blow_up_eigenfunctions}
Let $n_0 \in \mathbb{N}$ be such that \eqref{hp_la0_simple} holds. Let $u_0$ be an eigenfunction of \eqref{prob_Aharonov-Bohm_0} such that \eqref{def_u0} (together with \eqref{eq:propertyP} in the case $\rho=\frac12$)
is satisfied. For every $\e \in (0,1]$, let $u_\e$ be an eigenfunction of  \eqref{prob_Aharonov-Bohm_multipole} associated to the eigenvalue 
$\la_{\e,n_0}$ such that
\begin{equation}\label{hp_ue_remormalised}
\int_{\Omega}|u_\e|^2 \, dx=1 \quad \text{ and } \quad  \int_{\Omega} e^{-i(\Theta_\e-\Theta_0)} u_\e \overline{u_0}\, dx \text{ is a positive and real number}.
\end{equation}
Then 
\begin{equation}\label{limit_blow_up_eigenfunction_not_gauged}
\lim_{\e \to 0^+}\e^{-|m+\rho|}u_\e(\e \cdot) \to e^{i\Theta_1}(\Phi_0-\widetilde{V}+i(\Psi_0-\widetilde{W})) \quad \text{ as } \e \to 0^+,
\end{equation}
strongly in $H^{1,1}(D_r,\mb{C})$ for all $r>0$, where 
$\Phi_0$ and $\Psi_0$ are as in \eqref{def_Psi_rho} and  \eqref{def_Phi_rho}, respectively, 
and $\widetilde{V},\widetilde{W}$ are as in \eqref{min_prob_tilde}. Furthermore 
\begin{multline}\label{limit_convergence_rate_eigenfunction}
 \lim_{\e \to 0^+}\e^{-2|m+\rho|}   \int_{\Omega\setminus \Gamma_1}
|e^{-i\Theta_\e}(i\nabla +\mc{A}_\e^{(\rho_1,\dots,\rho_k)}) u_\e-e^{-i\Theta_0}(i\nabla +A^\rho_0)u_0|^2 \, dx \\=
\|\nabla \widetilde{V}\|_{L^2(\R^2\setminus \Gamma_1)}^2+
\|\nabla \widetilde{W}\|_{L^2(\R^2\setminus \Gamma_1)}^2.
\end{multline}
\end{theorem}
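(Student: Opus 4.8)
The plan is to work throughout with the gauged real pair $(v_\e,w_\e):=\big(\mathop{\rm{Re}}(e^{-i\Theta_\e}u_\e),\mathop{\rm{Im}}(e^{-i\Theta_\e}u_\e)\big)$, which solves \eqref{prob_eigenvalue_gauged} with $\la=\la_{\e,n_0}$ and satisfies $\|(v_\e,w_\e)\|_{L^2(\Omega)\times L^2(\Omega)}=1$. By \eqref{eq:tras-grad} the integrand in \eqref{limit_convergence_rate_eigenfunction} equals $|\nabla(v_\e-v_0)|^2+|\nabla(w_\e-w_0)|^2$, so \eqref{limit_convergence_rate_eigenfunction} is a statement about the rescaled global Dirichlet energy of $(v_\e-v_0,w_\e-w_0)$; moreover, since the magnetic potential is homogeneous of degree $-1$ we have $\nabla\big(\Theta_\e(\e\,\cdot)\big)=\nabla\Theta_1$, whence $e^{i\Theta_\e(\e\,\cdot)}=e^{i\Theta_1}$ up to a constant phase, and together with the norm equivalence induced by the magnetic Hardy inequality this reduces \eqref{limit_blow_up_eigenfunction_not_gauged} to
\[
\e^{-|m+\rho|}(v_\e,w_\e)(\e\,\cdot)\longrightarrow(\Phi_0-\widetilde V,\Psi_0-\widetilde W)\quad\text{strongly in }H^1(D_r\setminus\Gamma_1)^2\ \text{for all }r>0.
\]
The organizing idea is to compare $(v_\e,w_\e)$ with $(v_0-V_\e,w_0-W_\e)$; by linearity of $R^j,I^j$ and \eqref{def_VeWe} one has $(v_0-V_\e,w_0-W_\e)\in\widetilde{\mc H}_\e$, exactly as $(v_\e,w_\e)\in\widetilde{\mc H}_\e$.

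First I would record the blow-up of the comparison object. The limit profiles \eqref{limit_v_not_1/2}--\eqref{limit_w_1/2} give $\e^{-|m+\rho|}(v_0,w_0)(\e\,\cdot)\to(\Phi_0,\Psi_0)$ in $C^{1}_{\rm loc}$ off the cracks, while Proposition \ref{prop_blow_up} gives $\e^{-|m+\rho|}(V_\e,W_\e)(\e\,\cdot)\to(\widetilde V,\widetilde W)$ strongly in $H^1(D_r\setminus\Gamma_1)^2$, together with the global energy convergence $\e^{-2|m+\rho|}\|(V_\e,W_\e)\|_{\mc H_\e\times\mc H_\e}^2\to\|\nabla\widetilde V\|_{L^2(\R^2\setminus\Gamma_1)}^2+\|\nabla\widetilde W\|_{L^2(\R^2\setminus\Gamma_1)}^2$. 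Hence $\e^{-|m+\rho|}(v_0-V_\e,w_0-W_\e)(\e\,\cdot)\to(\Phi_0-\widetilde V,\Psi_0-\widetilde W)$, and everything is reduced to showing that the discrepancy
\[
Z_\e:=(v_\e,w_\e)-(v_0-V_\e,w_0-W_\e)\in\widetilde{\mc H}_\e
\]
is negligible, namely $\|Z_\e\|_{\mc H_\e\times\mc H_\e}=o(\e^{|m+\rho|})$.

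To this end I would derive an equation for $Z_\e$. Testing \eqref{prob_eigenvalue_gauged} against $\widetilde{\mc H}_\e$-elements, using the Euler--Lagrange equation of $(V_\e,W_\e)$ and the integration-by-parts identity (from the proof of Theorem \ref{theorem_asymptotic_not_precise}) by which the action of $(v_0,w_0)$ on $\widetilde{\mc H}_\e$-test functions produces precisely the boundary term $L_\e$ on $\bigcup_j S_\e^j$, the two occurrences of $L_\e$ cancel and one finds, for all $(\varphi,\psi)\in\widetilde{\mc H}_\e$,
\[
\int_{\Omega\setminus\Gamma_\e}\!\big(\nabla Z_\e^{(1)}\!\cdot\nabla\varphi+\nabla Z_\e^{(2)}\!\cdot\nabla\psi\big)\,dx=\la_\e\!\int_\Omega\!(v_\e\varphi+w_\e\psi)\,dx-\la_0\!\int_\Omega\!(v_0\varphi+w_0\psi)\,dx.
\]
Choosing $(\varphi,\psi)=Z_\e$ and writing $(v_\e,w_\e)=(v_0-V_\e,w_0-W_\e)+Z_\e$ gives
\[
\|Z_\e\|_{\mc H_\e\times\mc H_\e}^2-\la_\e\|Z_\e\|_{L^2(\Omega)\times L^2(\Omega)}^2=(\la_\e-\la_0)\big((v_0,w_0),Z_\e\big)_{L^2(\Omega)\times L^2(\Omega)}-\la_\e\big((V_\e,W_\e),Z_\e\big)_{L^2(\Omega)\times L^2(\Omega)}.
\]
The left-hand quadratic form is coercive on the $L^2$-orthogonal complement of the (doubled) eigenspace $\mathop{\rm span}\{(v_\e,w_\e),(-w_\e,v_\e)\}$, uniformly in $\e$, by the uniform Poincaré inequality \eqref{prop:appendix3_2} and the spectral gap granted by \eqref{limit_lae_la0} and \eqref{hp_la0_simple}; the normalization \eqref{hp_ue_remormalised}, which forces $\big((v_\e,w_\e),(-w_0,v_0)\big)_{L^2(\Omega)\times L^2(\Omega)}=0$ and $\big((v_\e,w_\e),(v_0,w_0)\big)_{L^2(\Omega)\times L^2(\Omega)}>0$, is used to control the eigenspace component of $Z_\e$. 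Inserting $|\la_\e-\la_0|=O(\e^{2|m+\rho|})$ from Theorem \ref{theorem_asymptotic_precise} and estimating the coupling $\la_\e\big((V_\e,W_\e),Z_\e\big)_{L^2(\Omega)\times L^2(\Omega)}$ through the scale-$\e$ concentration of $(V_\e,W_\e)$ then yields $\|Z_\e\|_{\mc H_\e\times\mc H_\e}=o(\e^{|m+\rho|})$. Finally the two claims follow: \eqref{limit_blow_up_eigenfunction_not_gauged} from the rescaled splitting $(v_\e,w_\e)=(v_0-V_\e,w_0-W_\e)+Z_\e$ after multiplying by $e^{i\Theta_1}$, and \eqref{limit_convergence_rate_eigenfunction} by expanding $\nabla(v_\e-v_0)=-\nabla V_\e+\nabla Z_\e^{(1)}$ (similarly for $w$), so that the energy equals $\|(V_\e,W_\e)\|_{\mc H_\e\times\mc H_\e}^2+o(\e^{2|m+\rho|})$ and Proposition \ref{prop_blow_up} supplies the limit.

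The hard part will be the global energy estimate $\|Z_\e\|_{\mc H_\e\times\mc H_\e}=o(\e^{|m+\rho|})$, which has to beat the a priori order $O(\e^{|m+\rho|})$ in order to produce the sharp constant in \eqref{limit_convergence_rate_eigenfunction}. This requires uniform coercivity on the moving-crack spaces $\widetilde{\mc H}_\e$ and, above all, a sharp bound on the $L^2$-coupling $\la_\e\big((V_\e,W_\e),Z_\e\big)_{L^2(\Omega)\times L^2(\Omega)}$, for which plain Cauchy--Schwarz is borderline insufficient: one must exploit the concentration of $(V_\e,W_\e)$ at scale $\e$ and the control on $(\widetilde V,\widetilde W)$ at infinity provided by the Hardy-type inequality of Subsection \ref{subsection_hardy_ineq}, while simultaneously pinning down the eigenspace projection of $Z_\e$ via the precise normalization \eqref{hp_ue_remormalised}.
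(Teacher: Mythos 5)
Your overall architecture coincides, modulo packaging, with the paper's: the paper likewise gauges to the real crack system, compares $(v_\e,w_\e)$ with $(Y_\e,Z_\e):=(v_0-V_\e,w_0-W_\e)$ (whose equation \eqref{eq_Ye_Ze} indeed has the $L_\e$-terms cancelled, exactly as you claim), pins down the eigenspace component via the normalization \eqref{hp_ve_we} together with Proposition \ref{prop_norm_L2_norm_nabla}, and concludes through Proposition \ref{prop_blow_up} and the scaling identity for the phase. Two small remarks on the periphery: in the paper $\Theta_\e(\e x)=\Theta_1(x)$ holds \emph{exactly} (angles are scale-invariant about the scaled poles), so your ``up to a constant phase'' should be sharpened to constant $=0$, otherwise \eqref{limit_blow_up_eigenfunction_not_gauged} would only be proved up to a rotation of the limit profile; and your closing worry that plain Cauchy--Schwarz is ``borderline insufficient'' for the coupling term is unfounded, since Proposition \ref{prop_norm_L2_norm_nabla} supplies precisely the $o$-gain $\|(V_\e,W_\e)\|_{L^2(\Omega)\times L^2(\Omega)}=o(\|(V_\e,W_\e)\|_{\mc H_\e\times\mc H_\e})$ that makes Cauchy--Schwarz sufficient there.

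The genuine gap is the coercivity claim. You assert that $(\varphi,\psi)\mapsto\|(\varphi,\psi)\|^2_{\mc H_\e\times\mc H_\e}-\la_\e\|(\varphi,\psi)\|^2_{L^2(\Omega)\times L^2(\Omega)}$ is uniformly coercive on the $L^2$-orthogonal complement $N_\e$ of ${\rm span}\{(v_\e,w_\e),(-w_\e,v_\e)\}$. This fails whenever $\la_{0,n_0}$ is not the lowest eigenvalue: eigenfunctions of \eqref{prob_eigenvalue_gauged} associated to eigenvalues $\la_{\e,n}<\la_\e$ lie in $N_\e$ (being $L^2$-orthogonal to the $\la_\e$-eigenspace), and on them the form equals $(\la_{\e,n}-\la_\e)\|\cdot\|^2_{L^2(\Omega)\times L^2(\Omega)}<0$. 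So the form is indefinite on $N_\e$, the simplicity of $\la_0$ only guarantees that $\la_\e$ is isolated in the restricted spectrum, and your tested identity does not yield $\|Z_\e\|_{\mc H_\e\times\mc H_\e}=o(\e^{|m+\rho|})$. The correct substitute — and this is exactly what the paper does in Step~2 of the proof of Theorem \ref{theorem_asymptotic_with_eigenfunction} — is a resolvent-distance estimate rather than coercivity: pass to the compact operator $\mc F_\e$ of \eqref{def_Fe}, restrict it to $N_\e$ (invariance holds because $(v_\e,w_\e)$ and $(-w_\e,v_\e)$ are eigenfunctions), and apply Proposition \ref{prop_spectral}-(iii) with the uniform bound $\big(\mathop{\rm{dist}}(\mu_\e,\sigma(\widetilde{\mc F}_\e))\big)^2\ge K$, which controls $q_\e(\chi_\e,\kappa_\e)$ by $K^{-1}q_\e\big(\mc F_\e(\chi_\e,\kappa_\e)-\mu_\e(\chi_\e,\kappa_\e)\big)$ regardless of whether $\mu_\e$ sits above, below, or inside a gap of the remaining spectrum; the right-hand side is then estimated by testing the equations, where your Cauchy--Schwarz bounds do apply. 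With this one step repaired — equivalently, by importing the already-established estimates \eqref{eq_v0w0_VeWe_Pi}--\eqref{eq_Pi(v0w0_VeWe)}, as the paper's Proposition \ref{prop_convergence_eigenfunctions} does, and identifying $(v_\e,w_\e)$ as the $L^2$-normalized projection $\Pi^1_\e(Y_\e,Z_\e)$ — the rest of your outline, including the derivation of \eqref{limit_convergence_rate_eigenfunction} from the splitting $\nabla(v_\e-v_0)=-\nabla V_\e+\nabla Z_\e^{(1)}$ and Proposition \ref{prop_blow_up}, goes through.
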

We observe that, by \eqref{hp_la0_simple} and \eqref{limit_lae_la0},  $\la_{\e,n_0}$ is simple as an eigenvalue of \eqref{prob_Aharonov-Bohm_multipole}, provided $\e$ is small enough. Hence, for any sufficiently small $\e$,  the eigenfunctions of \eqref{prob_Aharonov-Bohm_multipole} associated to $\la_{\e,n_0}$
 are multiples of a given one. Condition \eqref{hp_ue_remormalised} identifies, among all these, the one for which $e^{-i\Theta_\e} u_\e\to e^{-i\Theta_0} u_0$, see 
Remark~\ref{remark_spect_stability_eigenfunctions}.

\section{Preliminaries}\label{sec_preliminaries}
\subsection{Scalar potential functions}\label{subsec_scalar_potential}
For every $b=(b_1,b_2) \in \R^2$, let  $\theta_b: \R^2\setminus\{b\}
\to [0,2\pi)$ be defined as 
\begin{equation*}
\theta_b\big(b+r(\cos t,\sin t)\big)=t\quad\text{for all $t \in [0,2\pi)$ and $r>0$}.
\end{equation*}
We observe that  $\theta_b \in C^\infty(\R^2\setminus\{(x_1,b_2):x_1\geq b_1\})$ and that $\nabla \theta_b$ can be extended to be in $C^\infty(\R^2\setminus\{b\})$, 
with  $\nabla \big(\rho \theta_b\big)= A^\rho_b$ in $\R^2\setminus\{b\}$.

For every $b\in\R^2$, $\alpha \in \R$, and $x=(x_1,x_2)\in \R^2$, we consider the 
 rotation $R_{b,\alpha}$ about $b$ by an angle $\alpha$, i.e.
\begin{equation*}
R_{b,\alpha}(x):=
\begin{bmatrix}
b_1\\
b_2\\
\end{bmatrix}
+M_\alpha
\begin{bmatrix}
x_1-b_1\\
x_2-b_2\\
\end{bmatrix},
\end{equation*}
where
\begin{equation*}
M_\alpha:=\begin{bmatrix}
\cos\alpha&-\sin\alpha\\
\sin\alpha&\cos\alpha\\
\end{bmatrix}.
\end{equation*}
Letting $\theta_{b,\alpha}:= \theta_b \circ  R_{b,\alpha}$, we have
\begin{equation*}
\theta_{b,\alpha}(b+r(\cos t,\sin t))=\alpha+t\quad\text{for
every $r>0$ and $t\in[-\alpha,-\alpha+2\pi)$}.
\end{equation*}  We observe that
$\theta_{b,\alpha}$ is smooth in
$\R^2\setminus\{b+r(\cos\alpha,-\sin\alpha):r\geq0\}$ and 
$\nabla \theta_{b,\alpha}$ can be extended to be in
$C^\infty(\R^2\setminus\{b\})$, with
$\nabla \big(\rho\theta_{b,\alpha}\big)= A^\rho_b$ in $\R^2\setminus\{b\}$.

  \subsection{An equivalent eigenvalue problem by gauge transformation }\label{subsec_equivalent_Aharonov_Bohm}
For every $\e \in (0,1]$,  let
\begin{equation*}
\theta_\e^j:=	\theta_{a^j_\e,\pi-\alpha^j} \quad \text {for any  } j=1,\dots, k,
\end{equation*}
with  $\alpha^j$ as in \eqref{def_aj}. We observe that the function $\Theta_\e$ defined as
\begin{equation*}
\Theta_\e:\R^2\setminus\{a^j_\e:j=1,\dots,k\}\to\R, \quad \Theta_\e:=\sum_{j=1}^{k}\rho^j\theta_\e^j
\end{equation*}
 verifies \eqref{eq:proprieta_Theta-eps}.

For $\e\in (0,1]$, let $\la$ be an eigenvalue of problem \eqref{prob_Aharonov-Bohm_multipole}, $u \in H^{1,\e}_0(\Omega,\C)$ be a corresponding eigenfunction in the weak sense clarified in \eqref{eq_eigenfunctions_Aharonov_Bohm_multipole}, and $v$, $w$ be as in \eqref{def_RI_e}, that is
\begin{equation*}
    u=e^{i\Theta_\e} \left(v+iw \right).
\end{equation*}
It descends from the definition of $\Theta_\e$ that $(v,w) \in \widetilde{\mc{H}}_\e$, see \eqref{def_tilde_H_e},
and moreover solves the system
\begin{equation*}
\begin{cases}
\int_{\Omega \setminus \Gamma_\e} (\nabla v \cdot \nabla \varphi +\nabla w \cdot \nabla\psi)\, dx = \la \int_{\Omega}  (v \varphi+w \psi) \,  dx, \\
\int_{\Omega \setminus \Gamma_\e} (\nabla w \cdot \nabla \varphi-\nabla v \cdot \nabla \psi) \,  dx = \la \int_{\Omega}  (w \varphi - v \psi)\, dx,
\end{cases}
\end{equation*}
for any $(\varphi,\psi) \in \widetilde{\mc{H}}_\e$.
Furthermore, in view of Remark \ref{remark_psi_varphi}, the two equations of the system above are actually equivalent. We conclude that $(v,w)\in \widetilde{\mc{H}}_\e$ satisfies 
\begin{equation}\label{eq_multipole_gauged_e}
\int_{\Omega \setminus \Gamma_\e} (\nabla v \cdot \nabla \varphi +\nabla w \cdot \nabla \psi)\, dx = \la \int_{\Omega}  (v \varphi+w \psi) \,  dx \quad\text{for all } (\varphi,\psi) \in \widetilde{\mc{H}}_\e,
\end{equation}
which is the weak formulation of problem  \eqref{prob_eigenvalue_gauged}, as shown directly by integration by parts. 
On the other hand, if $(v,w) \in \widetilde{\mc{H}}_\e$ is a solution of \eqref{prob_eigenvalue_gauged} in the weak sense given by \eqref{eq_multipole_gauged_e},
then $e^{i\Theta_\e}(v+iw)$ belongs to $H^{1,\e}_0(\Omega,\C)$ and solves \eqref{eq_eigenfunctions_Aharonov_Bohm_multipole}. In conclusion, problems    \eqref{prob_eigenvalue_gauged} and \eqref{prob_Aharonov-Bohm_multipole} are equivalent, in the sense that they have the same spectrum and every eigenfunction $u=e^{i\Theta_\e} (v+iw)$ of \eqref{prob_Aharonov-Bohm_multipole} corresponds to the two linearly independent eigenfunctions $(v,w)$ and $(-w,v)$ of \eqref{prob_eigenvalue_gauged}.

In the limit case $\e=0$, we define
\begin{equation}\label{def_Theta0}
\Theta_0:\R^2\setminus\{0\}\to\R, \quad \Theta_0:=\sum_{j=1}^{k}\rho^j\theta_0^j,
\end{equation}
where $\theta^j_0=\theta_0\circ R_{0, \pi-\alpha^j}$, i.e.
\begin{equation}\label{eq:theta0j}
\theta_0^j(\cos t,\sin t)=-\alpha^j+t+\pi(1-2\rchi_{[\alpha^j+\pi,2\pi)}),\quad t\in[0,2\pi),
\end{equation}
with $\rchi$ as in \eqref{eq:chi_step_func}.
As $\Theta_0$ satisfies  \eqref{eq:proprieta_Theta-0}, arguing 
as above we have that  $u\in H_0^{1,0}(\Omega,\C)$ is an eigenfunction of \eqref{prob_Aharonov-Bohm_0} associated to the eigenvalue $\la$ if and only if, letting
\begin{equation*}
v:=\mathop{\rm{Re}}(e^{-i\Theta_0} u) \quad \text{ and } \quad w:= \mathop{\rm{Im}}(e^{-i\Theta_0} u),
\end{equation*}
$(v,w) \in \widetilde{\mc{H}}_0$  weakly solves  \eqref{prob_eigenvalue_gauged} with $\e=0$, that is
\begin{equation}\label{eq_multipole_gauged_0}
\int_{\Omega \setminus \Gamma_0} (\nabla v \cdot \nabla \varphi +\nabla w \cdot \nabla\psi)\, dx = \la \int_{\Omega}  (v \varphi+w \psi) \,  dx 
\quad \text{for all } (\varphi,\psi) \in \widetilde{\mc{H}}_0.
\end{equation}

\begin{remark}\label{rermark_u_vw}
Let  $\e \in (0,1]$, $u\in H^{1,\e}(\Omega,\mb{C})$,  
$v=\mathop{\rm{Re}}(e^{-i\Theta_\e}u)$, and 
$w=\mathop{\rm{Im}}(e^{-i\Theta_\e}u)$. Let also 
 $\xi\in H^{1,0}(\Omega,\mb{C})$,
 $g=\mathop{\rm{Re}}(e^{-i\Theta_0}\xi)$, and 
$h=\mathop{\rm{Im}}(e^{-i\Theta_0}\xi)$.
By direct computations we have
\begin{equation*}
  \int_{\Omega} e^{-i(\Theta_\e-\Theta_0)} u \bar{\xi}\, dx
  =\int_{\Omega} (vg+wh)\,dx+i\int_{\Omega} (wg-vh)\,dx,
\end{equation*}
hence
\begin{equation*}
\int_{\Omega} (wg-vh)\,dx =0 \quad \text{ and } \quad \int_{\Omega} (vg+wh)\,dx >0
\end{equation*}
if and only if 
\begin{equation*}
\int_{\Omega} e^{-i(\Theta_\e-\Theta_0)} u \bar{\xi}\, dx \text{ is a positive real number}.
\end{equation*} 
\end{remark}

\subsection{Asymptotics of  eigenfunctions of the limit problem}\label{subsec:asy-limit-eige}
The asymptotic behaviour of eigenfunctions of the limit problem \eqref{prob_Aharonov-Bohm_0} depends on weather the quantity $\rho$ defined in 
\eqref{def_rho} is half-integer or not.

\begin{proposition}\label{prop_vw_asympotic_not_1/2}
Let $\rho$ be as in \eqref{def_rho}-\eqref{hp_cirulation} and assume that $\rho \neq \frac12$. If $(v,w) \in \widetilde{\mc{H}}_0\setminus\{(0,0)\}$ satisfies \eqref{eq_multipole_gauged_0},
then there exist $m \in \mb{Z}$, $\beta\in(0,+\infty)$, and $\gamma\in\big[0,\frac{2\pi}{|m+\rho|}\big)$
such that, as $\delta \to 0^+$,
\begin{equation}\label{limit_vw_not_1/2_bis}
\delta^{-|m+\rho|} v\big(\delta\cos t,\delta\sin t\big) \to 
\beta\cos(2\pi f(t)+(m+\rho)(\gamma-t))
\end{equation}
and 
\begin{equation}\label{limit_vw_not_1/2_tris}
\delta^{-|m+\rho|} w\big(\delta\cos t,\delta\sin t\big) \to 
\beta\sin(2\pi f(t)+(m+\rho)(\gamma-t))
\end{equation}
in $C^{1,\tau}\big([0,2\pi]\setminus\{\alpha^j+\pi\}_{j=1}^{k},\R\big)$ for all $\tau \in (0,1)$, with $f$ as in \eqref{def_f}. 
Furthermore 
\begin{equation}\label{limit_vw_not_1/2_H1}
\delta^{-|m+\rho|} v(\delta \cdot) \to \Phi \quad \text{  and }  \quad \delta^{-|m+\rho|} w(\delta \cdot) \to \Psi\quad \text{as $\delta \to 0^+$},
\end{equation}
strongly in $H^1(D_r\setminus \Gamma_0)$ for all $r>0$, where, for all $\sigma\geq0$ and $t\in[0,2\pi)$, 
\begin{align*}
&\Phi(\sigma\cos t,\sigma\sin t)=\beta \sigma^{|m+\rho|} \cos(2\pi f(t)+(m+\rho)(\gamma-t)),\\
&\Psi(\sigma\cos t,\sigma\sin t)=\beta \sigma^{|m+\rho|}
\sin(2\pi f(t)+(m+\rho)(\gamma-t)).
\end{align*}
Finally, there exists a constant $C>0$ such that
\begin{equation}\label{ineq_v_pointiweise_not_1/2}
|v(x)| \le C|x|^{|m+\rho|} \quad \text{ and } \quad |\nabla v(x)| \le C|x|^{|m+\rho|-1} \quad\text{ for all } x \in \Omega\setminus\Gamma_0,
\end{equation}
and 
\begin{equation}\label{ineq_w_pointiweise_not_1/2}
|w(x)| \le C|x|^{|m+\rho|} \quad \text{ and } \quad |\nabla w(x)| \le C|x|^{|m+\rho|-1}\quad\text{ for all } x \in \Omega\setminus\Gamma_0.
\end{equation}
\end{proposition}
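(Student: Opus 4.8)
The plan is to undo the gauge transformation and reduce the statement to a blow-up analysis for a single-pole Aharonov--Bohm eigenfunction. Indeed, by the equivalence recorded around \eqref{eq_multipole_gauged_0}, saying that $(v,w)\in\widetilde{\mathcal H}_0\setminus\{(0,0)\}$ solves \eqref{eq_multipole_gauged_0} is the same as saying that $u:=e^{i\Theta_0}(v+iw)\in H^{1,0}_0(\Omega,\mathbb C)\setminus\{0\}$ is an eigenfunction of \eqref{prob_Aharonov-Bohm_0}, where the single pole sits at the origin with circulation $\rho$. I would then invoke the Almgren-type monotonicity formula and the associated blow-up classification for magnetic eigenfunctions near an Aharonov--Bohm pole, as developed in \cite{FFT} and used in the companion works. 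This provides a frequency limit $\gamma_0\ge 0$ at the origin, the existence of a nontrivial homogeneous blow-up profile of degree $\gamma_0$, and---writing $(i\nabla+A^\rho_0)^2$ in polar coordinates as $-\partial_{rr}-r^{-1}\partial_r+r^{-2}(i\partial_t+\rho)^2$---the identification of the admissible degrees as the square roots of the eigenvalues of the angular operator $(i\partial_t+\rho)^2$ on $2\pi$-periodic functions, namely $|k-\rho|$ with $k\in\mathbb Z$. Setting $m:=-k$ yields $\gamma_0=|m+\rho|$ for some $m\in\mathbb Z$, and $\gamma_0>0$ since $\rho\in(0,1)$ forces $m+\rho\neq0$.

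Next I would use the hypothesis $\rho\in(0,1)$, $\rho\neq\frac12$, i.e. $2\rho\notin\mathbb Z$, to conclude that each angular eigenvalue $|k-\rho|^2$ is simple: two distinct integers $k,k'$ give the same value only if $k+k'=2\rho$, which is impossible. Hence the blow-up limit consists of a single angular mode, $\delta^{-\gamma_0}u(\delta\,\cdot\,)\to c\,r^{|m+\rho|}e^{-imt}$ for some $c\in\mathbb C\setminus\{0\}$, with convergence in $H^1$ on discs. I would then restrict the gauge phase to the unit circle: from \eqref{def_Theta0}--\eqref{eq:theta0j} a direct computation gives $\Theta_0(\cos t,\sin t)=\rho t-2\pi f(t)+C_0$, with the constant $C_0:=\pi\rho-\sum_{j}\rho^j\alpha^j$ and $f$ as in \eqref{def_f}. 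Multiplying the limit profile by $e^{-i\Theta_0}$ and writing the complex constant $c\,e^{-iC_0}$ as $\beta\,e^{i(m+\rho)\gamma}$ with $\beta>0$ and $\gamma\in\big[0,\tfrac{2\pi}{|m+\rho|}\big)$---the range being forced by the $\tfrac{2\pi}{|m+\rho|}$-periodicity of $\gamma\mapsto e^{i(m+\rho)\gamma}$---produces
\[
(v+iw)(\delta\cos t,\delta\sin t)\sim \beta\,\delta^{|m+\rho|}\,e^{i(2\pi f(t)+(m+\rho)(\gamma-t))},
\]
whose real and imaginary parts are exactly \eqref{limit_vw_not_1/2_bis}--\eqref{limit_vw_not_1/2_tris}, and the $H^1(D_r\setminus\Gamma_0)$-convergence \eqref{limit_vw_not_1/2_H1} to the homogeneous profiles $\Phi,\Psi$.

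To upgrade the angular convergence from $H^1$ to $C^{1,\tau}\big([0,2\pi]\setminus\{\alpha^j+\pi\}_{j},\mathbb R\big)$, I would exploit that the rescaled functions $v_\delta:=\delta^{-|m+\rho|}v(\delta\,\cdot\,)$ and $w_\delta:=\delta^{-|m+\rho|}w(\delta\,\cdot\,)$ solve $-\Delta v_\delta=\lambda\delta^2 v_\delta$, $-\Delta w_\delta=\lambda\delta^2 w_\delta$ away from the cracks $\Gamma_0$, with the jump relations $R^j=I^j=0$ being invariant under rescaling. On any closed annular sector staying away from the crack directions $\{\alpha^j+\pi\}$, interior elliptic estimates give uniform $C^{1,\tau}_{\mathrm{loc}}$ bounds, so the $H^1$-convergence improves to $C^{1,\tau}$ after taking traces on circles; near each crack the homogeneous jump structure yields the matching one-sided limits at $t=\alpha^j+\pi$. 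Finally, the pointwise bounds \eqref{ineq_v_pointiweise_not_1/2}--\eqref{ineq_w_pointiweise_not_1/2} follow because $\gamma_0=|m+\rho|$ is the exact vanishing order: monotonicity of the Almgren quotient gives $\int_{\partial D_r}|u|^2\,dS\le C\,r^{2|m+\rho|+1}$, whence $|u(x)|\le C|x|^{|m+\rho|}$ by a scaled $L^2$-to-$L^\infty$ elliptic estimate on dyadic annuli, and $|\nabla u(x)|\le C|x|^{|m+\rho|-1}$ by the same argument applied to gradients. These transfer to $v,w$ since $|v|,|w|\le|u|$ and $|\nabla(e^{-i\Theta_0})|=|A^\rho_0|\le C/|x|$, so that $|\nabla v|\le|\nabla u|+C|x|^{-1}|u|$.

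The main obstacle is the blow-up classification itself---establishing that the Almgren frequency limit exists, is attained by a nontrivial homogeneous profile, and lands on the discrete set $\{|m+\rho|:m\in\mathbb Z\}$. This is precisely the content of the monotonicity machinery for Aharonov--Bohm operators available in the cited references; once it is in place, the non-half-integer hypothesis enters only to force simplicity of the relevant angular mode, and the remainder is the explicit bookkeeping of the gauge phase $\Theta_0$ together with standard elliptic upgrades.
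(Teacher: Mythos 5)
Your proposal is correct and follows essentially the same route as the paper's proof: gauge back to $u=e^{i\Theta_0}(v+iw)$, invoke the single-pole blow-up classification of \cite[Theorem 1.3, Section 7]{FFT} for non-half-integer $\rho$, compute the phase on circles, $\Theta_0(\delta\cos t,\delta\sin t)=\rho t-2\pi f(t)+\pi\rho-\sum_{j=1}^k\rho^j\alpha^j$, and extract $\beta$ and $\gamma\in\big[0,\frac{2\pi}{|m+\rho|}\big)$ from the polar form of the complex constant. The only deviations are inessential: you re-derive pieces of the FFT classification (Almgren frequency, simplicity of the angular mode since $2\rho\notin\Z$) and upgrade $H^1$ to $C^{1,\tau}$ by elliptic estimates, whereas the paper directly uses the $C^{1,\tau}$ convergence of $u$ and $\nabla u$ that \cite{FFT} already supplies (which also yields the pointwise bounds without your dyadic doubling argument).
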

\begin{proof}
The function $u:=e^{i\Theta_0}(v+iw)$ is an eigenfunction of \eqref{prob_Aharonov-Bohm_0} as observed in Section \ref{subsec_equivalent_Aharonov_Bohm}.
Since $\rho \neq \frac12$, by \cite[Theorem 1.3, Section 7]{FFT} there exist $m \in \mathbb{Z}$ 
and a constant $c \in \mb{C}\setminus\{0\}$ such that, as $\delta \to 0^+$,
\begin{equation}\label{proof:prop_vw_asympotic_1}
\delta^{-|m+\rho|}u(\delta \cos t,\delta \sin t) \to c e^{-i mt} 
\end{equation}
in $C^{1,\tau}([0,2\pi])$
and 
\begin{equation}\label{proof:prop_vw_asympotic_2}
\delta^{1-|m+\rho|}\nabla u(\delta \cos t,\delta \sin t) \to 
ce^{-i mt}\Big(|m+\rho| (\cos t,\sin t)-im(-\sin t,\cos t)\Big)
\end{equation}
in $C^{0,\tau}([0,2\pi])$,
for every $\tau \in (0,1)$.
Furthermore, in view of \eqref{def_Theta0}, \eqref{eq:theta0j} and \eqref{def_f}
\begin{equation}\label{proof:prop_vw_asympotic_2.5}
\Theta_0(\delta\cos t,\delta \sin t)=\rho t  -2\pi f(t) +\pi \rho-\sum_{j=1}^k \rho^j \alpha^j.
\end{equation}
It follows that, as $\delta \to 0^+$,
\begin{equation}\label{proof:prop_vw_asympotic_3}
\delta^{-|m+\rho|}(v(\delta\cos t,\delta \sin t)+iw(\delta\cos t,\delta \sin t))
\to c e^{i\left(\sum_{j=1}^k \rho^j \alpha^j-\pi \rho\right)}e^{2\pi i f(t)}e^{-i(m+\rho)t}
\end{equation}
in $C^{1,\tau}([0,2\pi]\setminus \{\alpha^j+\pi\}_{j=1}^k)$ for any $\tau \in (0,1)$. We deduce  \eqref{limit_vw_not_1/2_bis} and \eqref{limit_vw_not_1/2_tris} from \eqref{proof:prop_vw_asympotic_3}  by taking the real and imaginary parts, respectively.
Letting
\begin{equation*}
\tilde u_\delta:=\delta^{-|m+\rho|}u(\delta \cdot) \quad \text{ and } \quad \Upsilon(x)=\Upsilon(\sigma \cos t,\sigma\sin t)= c \,\sigma^{|m+\rho|} \,e^{-i mt},
\end{equation*}
from  \eqref{proof:prop_vw_asympotic_1}, \eqref{proof:prop_vw_asympotic_2}, and  the Dominated Convergence Theorem it follows that 
\begin{equation}\label{proof:prop_vw_asympotic_4}
\nabla \tilde u_\delta\to  \nabla \Upsilon \quad   \text{ and }
\quad \frac{\tilde u_\delta}{|x|}\to  \frac{\Upsilon}{|x|} \quad \text{ strongly in }
L^2(D_r)
\end{equation}
as $\delta \to 0^+$, for any $r>0$. 
Since $u=e^{i\Theta_0}(v+iw)$, \eqref{limit_vw_not_1/2_H1} follows from \eqref{proof:prop_vw_asympotic_4}.
Finally we can deduce \eqref{ineq_v_pointiweise_not_1/2} and \eqref{ineq_w_pointiweise_not_1/2} 
from \eqref{proof:prop_vw_asympotic_1} and  \eqref{proof:prop_vw_asympotic_2}.
\end{proof}

\begin{proposition}\label{prop_vw_asympotic_1/2}
Let $\rho=\frac12$ and $u\in H^{1,0}_0(\Omega,\C)\setminus\{0\}$ be an eigenfunction of problem \eqref{prob_Aharonov-Bohm_0} satisfying \eqref{eq:propertyP}.
Let 
$v=\mathop{\rm{Re}}(e^{-i\Theta_0} u)$ and 
$w=\mathop{\rm{Im}}(e^{-i\Theta_0} u)$. Then there exist  $m \in \mb{N}$, $\beta\in(0,+\infty)$,  and 
$\gamma\in\big[0,\frac{4\pi}{2m+1}\big)$ such that 
\begin{equation}\label{limit_v_1/2_bis}
\delta^{-(m+\frac{1}{2})} v\big(\delta\cos t,\delta\sin t\big)\to 
\beta \cos(2\pi f(t)) \cos\big((m+\tfrac 12)(\gamma+t)\big)
\end{equation}
and 
\begin{equation}\label{limit_v_1/2_tris}
\delta^{-(m+\frac{1}{2})} w\big(\delta\cos t,\delta\sin t\big)\to 
\beta \sin(2\pi f(t)) \cos\big((m+\tfrac 12)(\gamma+t)\big)
\end{equation}
as $\delta \to 0^+$ in $C^{1,\tau}\big([0,2\pi]\setminus\{\alpha^j+\pi\}_{j=1}^{k},\R\big)$ for all $\tau \in (0,1)$. 
Furthermore 
\begin{equation}\label{limit_vw_1/2_H1}
\delta^{-(m+\frac 12)} v(\delta \cdot) \to \Phi \quad \text{  and }  \quad \delta^{-(m+\frac 12)} w(\delta \cdot) \to \Psi\quad \text{as $\delta \to 0^+$},
\end{equation}
strongly in $H^1(D_r\setminus \Gamma_0)$ for all $r>0$, where, for all $\sigma\geq0$ and $t\in[0,2\pi)$, 
\begin{align*}
    &\Phi(\sigma\cos t,\sigma\sin t)=\beta \sigma^{m+\frac 12}\cos(2\pi f(t))\cos\big((m+\tfrac 12)(\gamma+t)\big),\\
&    \Psi(\sigma\cos t,\sigma\sin t)=\beta \sigma^{m+\frac 12}\sin(2\pi f(t))\cos\big((m+\tfrac 12)(\gamma+t)\big).
\end{align*}
Finally, there exists a constant $C>0$ such that
\begin{equation}\label{ineq_v_pointiweise_1/2}
|v(x)| \le C|x|^{m+\frac 12} \quad \text{ and } \quad |\nabla v(x)| \le C|x|^{m-\frac 12} \quad\text{ for all } x \in \Omega\setminus\Gamma_0,
\end{equation}
and 
\begin{equation}\label{ineq_w_pointiweise_1/2}
|w(x)| \le C|x|^{m+\frac 12} \quad \text{ and } \quad |\nabla w(x)| \le C|x|^{m-\frac 12}\quad\text{ for all } x \in \Omega\setminus\Gamma_0.
\end{equation}
\end{proposition}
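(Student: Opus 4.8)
The plan is to mirror the proof of Proposition~\ref{prop_vw_asympotic_not_1/2}, feeding the blow-up result of \cite{FFT} into the gauge identity, but with one essential modification: at $\rho=\frac12$ the relevant angular eigenspace is two-dimensional, so the blow-up limit is no longer a single angular mode, and the symmetry assumption \eqref{eq:propertyP} must be used to pin it down. As before, set $u:=e^{i\Theta_0}(v+iw)$, which by Section~\ref{subsec_equivalent_Aharonov_Bohm} is an eigenfunction of \eqref{prob_Aharonov-Bohm_0}. By the blow-up analysis of \cite[Theorem 1.3, Section 7]{FFT}, there exist a nonnegative integer $m$ and a nonzero homogeneous solution $\Upsilon$ of $(i\nabla+A_0^{1/2})^2\Upsilon=0$ of degree $m+\frac12$ (the vanishing order of $u$ at $0$) such that $\delta^{-(m+1/2)}u(\delta\,\cdot)\to\Upsilon$ in $C^{1,\tau}([0,2\pi])$. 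The difference with the case $\rho\ne\frac12$ is that the degree $m+\frac12$ is attained by the two angular momenta $m$ and $-m-1$, since $|m+\frac12|=|-m-1+\frac12|$; hence the space of homogeneous solutions of degree $m+\frac12$ is spanned by $r^{m+1/2}e^{-imt}$ and $r^{m+1/2}e^{i(m+1)t}$, and $\Upsilon(\sigma\cos t,\sigma\sin t)=\sigma^{m+1/2}\big(c_1e^{-imt}+c_2e^{i(m+1)t}\big)$ for some $(c_1,c_2)\ne(0,0)$.

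Next I would exploit \eqref{eq:propertyP}. Since $u$ is $K$-real, the function $e^{-i(\frac t2+\Lambda)}u(\delta\cos t,\delta\sin t)$ is real for every $\delta>0$, with $\Lambda=\frac\pi2-\sum_j\rho^j\alpha^j$ as in Remark~\ref{rem:propertyP}; passing to the $C^{1,\tau}$ limit forces $e^{-i(\frac t2+\Lambda)}\Upsilon(\cos t,\sin t)$ to be real as well. A direct computation gives
\begin{equation*}
e^{-i(\frac t2+\Lambda)}\big(c_1e^{-imt}+c_2e^{i(m+1)t}\big)=e^{-i\Lambda}\big(c_1e^{-i(m+\frac12)t}+c_2e^{i(m+\frac12)t}\big),
\end{equation*}
and imposing reality for all $t$ yields $c_2=e^{2i\Lambda}\overline{c_1}$ together with $c_1\ne0$ (otherwise $\Upsilon\equiv0$). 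Writing $c_1=\frac\beta2 e^{i\phi}$ with $\beta\in(0,+\infty)$, one obtains $e^{-i(\frac t2+\Lambda)}\Upsilon(\cos t,\sin t)=\beta\cos\big((m+\tfrac12)t+\Lambda-\phi\big)$.

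Finally I would transfer this back to $(v,w)$ through the gauge. By \eqref{proof:prop_vw_asympotic_2.5} with $\rho=\frac12$ and the definition of $\Lambda$, one has $\Theta_0(\delta\cos t,\delta\sin t)=\frac t2-2\pi f(t)+\Lambda$, so that $e^{-i\Theta_0}=e^{-i(\frac t2+\Lambda)}e^{2\pi i f(t)}$ on the circle of radius $\delta$. Hence
\begin{equation*}
\delta^{-(m+\frac12)}(v+iw)(\delta\cos t,\delta\sin t)\to e^{2\pi i f(t)}\,\beta\cos\big((m+\tfrac12)t+\Lambda-\phi\big),
\end{equation*}
and choosing $\gamma\in\big[0,\frac{4\pi}{2m+1}\big)$ with $(m+\frac12)\gamma\equiv\Lambda-\phi\pmod{2\pi}$ rewrites the cosine as $\cos\big((m+\frac12)(\gamma+t)\big)$. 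Taking real and imaginary parts, using that $\cos(2\pi f(t))$ and $\sin(2\pi f(t))$ are the real and imaginary parts of $e^{2\pi i f(t)}$, gives \eqref{limit_v_1/2_bis}--\eqref{limit_v_1/2_tris}. The $H^1(D_r\setminus\Gamma_0)$ convergence \eqref{limit_vw_1/2_H1} and the pointwise bounds \eqref{ineq_v_pointiweise_1/2}--\eqref{ineq_w_pointiweise_1/2} then follow from the gradient blow-up \eqref{proof:prop_vw_asympotic_2} and dominated convergence, exactly as in the proof of Proposition~\ref{prop_vw_asympotic_not_1/2}. The main obstacle is precisely the first step: unlike the case $\rho\ne\frac12$, the limit profile is genuinely two-dimensional, and the whole content of the half-integer case is that the $K$-reality \eqref{eq:propertyP} collapses this two-parameter family to the single real profile above.
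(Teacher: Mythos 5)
Your proposal is correct and follows essentially the same route as the paper: invoke the blow-up result of \cite[Theorem 1.3, Section 7]{FFT} for $u=e^{i\Theta_0}(v+iw)$, use the $K$-reality \eqref{eq:propertyP} to reduce the two-dimensional space of limit profiles to the single real family $\beta e^{i\frac t2}e^{i\Lambda}\cos\big((m+\tfrac12)(\gamma+t)\big)$, and transfer back to $(v,w)$ via the phase formula \eqref{proof:prop_vw_asympotic_2.5}, with the $H^1$ convergence and pointwise bounds handled as in Proposition \ref{prop_vw_asympotic_not_1/2}. The only difference is presentational: the paper states the reality reduction without computation in the basis $e^{i\frac t2}\cos((m+\tfrac12)t)$, $e^{i\frac t2}\sin((m+\tfrac12)t)$, while you carry it out explicitly in the equivalent exponential basis, deriving $c_2=e^{2i\Lambda}\overline{c_1}$ — a correct and slightly more detailed version of the same step.
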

\begin{proof}
Since $u:=e^{i\Theta_0}(v+iw) $ is an eigenfunction of problem \eqref{prob_Aharonov-Bohm_0} with  $\rho = \frac12$, by \cite[Theorem 1.3, Section 7]{FFT} there exist  $m \in \mb{N}$  
and  $(c_1,c_2) \in \mb{C}\setminus\{(0,0)\}$ such that, as $\delta \to 0^+$,
\begin{equation}\label{eq:conve1}
\delta^{-(m+\frac{1}{2})}u(\delta \cos t,\delta \sin t) \to e^{\frac{i}{2}t}\left(c_1\cos\left((m+\tfrac{1}{2})t\right)+c_2\sin\left((m+\tfrac{1}{2})t\right)\right)
\text{ in } C^{1,\tau}([0,2\pi],\C).
\end{equation}
Furthermore, since $u$ satisfies \eqref{eq:propertyP}, we can rewrite the right hand side of 
\eqref{eq:conve1} as 
\begin{equation*}
    \beta e^{i\frac t2}e^{i\Lambda}\cos\big((m+\tfrac 12)(\gamma+t)\big)
\end{equation*}
for some $\beta\in (0,+\infty)$ and $\gamma\in\big[0,\frac{4\pi}{2m+1}\big)$.
 
By \eqref{proof:prop_vw_asympotic_2.5} it follows that, as $\delta \to 0^+$,
\begin{align*}
\frac{v(\delta\cos t,\delta \sin t)}{\delta^{m+\frac{1}{2}}}\to 
&\mathop{\rm Re}\left(\beta e^{i2\pi f(t)}\cos\big((m+\tfrac 12)(\gamma+t)\big)
\right)=\beta \cos\big(2\pi f(t))\cos\big((m+\tfrac 12)(\gamma+t)\big)\\
\frac{w(\delta\cos t,\delta \sin t)}{\delta^{m+\frac{1}{2}}}
\to &
\mathop{\rm Im}\left(\beta e^{i2\pi f(t)}\cos\big((m+\tfrac 12)(\gamma+t)\big)
\right)=\beta \sin\big(2\pi f(t))\cos\big((m+\tfrac 12)(\gamma+t)\big)
\end{align*}
in $C^{1,\tau}([0,2\pi]\setminus \{\alpha^j+\pi\}_{j=1}^k)$ for any $\tau \in (0,1)$, thus proving \eqref{limit_v_1/2_bis} and \eqref{limit_v_1/2_tris}. Finally, \eqref{limit_vw_1/2_H1}, \eqref{ineq_v_pointiweise_1/2}, and \eqref{ineq_w_pointiweise_1/2}
 can be proved arguing as in Proposition \ref{prop_vw_asympotic_not_1/2}.
\end{proof}

\section{Properties of $\mc{E}_\e$}\label{sec_propoerties_Ee}
Let $n_0 \in \mb{N} \setminus\{0\}$ and  $u_0$ be an eigenfunction of problem \eqref{prob_Aharonov-Bohm_0} associated to the eigenvalue $\la_{0,n_0}$, such that  \eqref{def_u0} (together with \eqref{eq:propertyP} if $\rho=\frac12$) is satisfied. 
Let  $(v_0,w_0)\in \widetilde{\mc{H}}_0$ be as \eqref{def_v0_w0}. 

The linear functional $L_\e$ in \eqref{def_Le} is well-defined by the H\"older inequality.
Indeed, by \eqref{ineq_v_pointiweise_not_1/2}--\eqref{ineq_w_pointiweise_not_1/2} and \eqref{ineq_v_pointiweise_1/2}--\eqref{ineq_w_pointiweise_1/2}, 
for all $j=1,\dots,k$, we have  
\begin{equation*}
|\nabla v_0|,|\nabla w_0| \in L^p(S_\e^j) \quad \text{ for every } p \in \left[1,
\min\bigg\{\frac1\rho,\frac1{1-\rho}\bigg\}\right);
\end{equation*}
on the other hand, if $\varphi \in \mc{H}_1$, then $\gamma^j_+(\varphi) \in L^q(S_\e^j)$ for all $q \in [1,+\infty)$ and 
 $j=1,\dots,k$, by \eqref{def_traces} and boundedness of $S_\e^j$.
We now prove that $L_\e\in (\mc{H}_1\times\mc{H}_1 )^*$, being $(\mc{H}_1\times\mc{H}_1 )^*$ the dual space of $\mc{H}_1\times\mc{H}_1$, providing an estimate of the dual norm.

\begin{proposition}\label{prop_Le_continuity}
Let $(v_0,w_0)$ be as above and let $m$ be as in Proposition \ref{prop_vw_asympotic_not_1/2},  if $\rho \neq \frac12$, or as in 
Proposition \ref{prop_vw_asympotic_1/2}, if $\rho=\frac12$, with $(v,w)=(v_0,w_0)$. Then, for every $\e \in (0,1]$, the functional $L_\e$ defined in \eqref{def_Le} belongs to $(\mc{H}_1\times \mc{H}_1)^*$ and 
\begin{equation*}
\norm{L_\e}_{ (\mc{H}_1\times\mc{H}_1 )^*}=O(\e^{|m+\rho|-1+\frac{1}{p}}) \quad \text{ as } \e \to 0^+,
\end{equation*}
for every $p \in \big(1,\min\big\{\frac1\rho,\frac1{1-\rho}\big\}\big)$. In particular, $\lim_{\e\to0^+}\norm{L_\e}_{ (\mc{H}_1\times\mc{H}_1 )^*}=0$.
\end{proposition}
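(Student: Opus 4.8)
The plan is to reduce the whole statement to a Hölder estimate on each of the line integrals over $S^j_\e$ that make up $L_\e$. Since the coefficients $b_j-1$ and $d_j$ in \eqref{def_Le} are bounded and the number of poles $k$ is finite, it suffices to bound each term of the form $\int_{S^j_\e}(\nabla v_0\cdot\nu^j)\,\gamma_+^j(\varphi)\,dS$ (and the analogous terms involving $\nabla w_0$ and $\gamma_+^j(\psi)$) in modulus by $C\e^{|m+\rho|-1+\frac1p}\norm{\varphi}_{\mc{H}_1}$. Fixing $p\in\big(1,\min\{\tfrac1\rho,\tfrac1{1-\rho}\}\big)$ and letting $p'=\frac{p}{p-1}$ be its conjugate exponent, I would apply Hölder's inequality on $S^j_\e$ to split each integral as
\begin{equation*}
\Big|\int_{S^j_\e}(\nabla v_0\cdot\nu^j)\,\gamma_+^j(\varphi)\,dS\Big|\le \norm{\nabla v_0}_{L^p(S^j_\e)}\,\norm{\gamma_+^j(\varphi)}_{L^{p'}(S^j_\e)}.
\end{equation*}

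First I would estimate the factor carrying the decay. Parametrizing $S^j_\e$ by arc length, so that $|x|=s$ ranges over $[0,\e r_j]$ with $r_j=|a^j|$, and using the pointwise bound $|\nabla v_0(x)|\le C|x|^{|m+\rho|-1}$ from \eqref{ineq_v_pointiweise_not_1/2} (or \eqref{ineq_v_pointiweise_1/2} when $\rho=\frac12$), I get
\begin{equation*}
\norm{\nabla v_0}_{L^p(S^j_\e)}^p\le C^p\int_0^{\e r_j}s^{p(|m+\rho|-1)}\,ds.
\end{equation*}
The integral converges precisely because the chosen range of $p$ guarantees $p(|m+\rho|-1)>-1$: the worst case is the minimal admissible vanishing order $|m+\rho|=\min\{\rho,1-\rho\}$, for which $\frac{1}{1-|m+\rho|}=\min\{\tfrac1\rho,\tfrac1{1-\rho}\}$, matching exactly the upper bound imposed on $p$. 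Evaluating the integral yields $\norm{\nabla v_0}_{L^p(S^j_\e)}=O(\e^{|m+\rho|-1+\frac1p})$, and likewise for $\nabla w_0$. This is the only delicate point: the constraint on $p$ in the statement is dictated solely by the requirement that $\nabla v_0,\nabla w_0$ be $L^p$-integrable up to the collision point.

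For the trace factor I would invoke the continuity of $\gamma_+^j:\mc{H}_1\to L^{p'}(\Sigma^j\cap\Omega)$ (recall that $p'\in[1,+\infty)$). Since $S^j_\e\subset S^j_1\subset\Sigma^j\cap\Omega$, this gives $\norm{\gamma_+^j(\varphi)}_{L^{p'}(S^j_\e)}\le\norm{\gamma_+^j(\varphi)}_{L^{p'}(\Sigma^j\cap\Omega)}\le C\norm{\varphi}_{\mc{H}_1}$, with $C$ independent of $\e$. Combining the two bounds, summing over $j=1,\dots,k$ and over the four types of terms, and using $|b_j-1|\le2$ and $|d_j|\le1$, I obtain
\begin{equation*}
|L_\e(\varphi,\psi)|\le C\,\e^{|m+\rho|-1+\frac1p}\,\norm{(\varphi,\psi)}_{\mc{H}_1\times\mc{H}_1},
\end{equation*}
which proves that $L_\e\in(\mc{H}_1\times\mc{H}_1)^*$ together with the asserted bound on its dual norm.

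Finally, to deduce $\lim_{\e\to0^+}\norm{L_\e}_{(\mc{H}_1\times\mc{H}_1)^*}=0$, I would check that the exponent $|m+\rho|-1+\frac1p$ is strictly positive. Since $m\in\Z$ and $\rho\in(0,1)$ force $|m+\rho|\ge\min\{\rho,1-\rho\}$, while the constraint on $p$ gives $\frac1p>\max\{\rho,1-\rho\}$, we obtain $|m+\rho|-1+\frac1p>\min\{\rho,1-\rho\}+\max\{\rho,1-\rho\}-1=0$, using $\rho+(1-\rho)=1$. Hence the dual norm vanishes as $\e\to0^+$.
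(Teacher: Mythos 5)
Your proof is correct and follows essentially the same route as the paper's: H\"older's inequality on each $S^j_\e$, the pointwise gradient bounds \eqref{ineq_v_pointiweise_not_1/2}--\eqref{ineq_w_pointiweise_not_1/2} and \eqref{ineq_v_pointiweise_1/2}--\eqref{ineq_w_pointiweise_1/2} to get $\norm{\nabla v_0}_{L^p(S^j_\e)}=O(\e^{|m+\rho|-1+\frac1p})$, and the $\e$-independent continuity of the trace operators \eqref{def_traces} for the factor $\|\gamma_+^j(\varphi)\|_{L^{p'}(S^j_\e)}$. Your explicit verification that the range of $p$ ensures both the integrability $p(|m+\rho|-1)>-1$ and the strict positivity of the exponent $|m+\rho|-1+\frac1p$ (via $|m+\rho|\ge\min\{\rho,1-\rho\}$ and $\frac1p>\max\{\rho,1-\rho\}$) is a welcome elaboration of points the paper leaves implicit.
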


\begin{proof}
For every $p \in \big(1,\min\big\{\frac1\rho,\frac1{1-\rho}\big\}\big)$,   $(\varphi,\psi) \in \mc{H}_1\times\mc{H}_1$, and $\e \in (0,1]$,  by the H\"older inequality, the continuity of the trace operators in \eqref{def_traces}, \eqref{ineq_v_pointiweise_not_1/2}--\eqref{ineq_w_pointiweise_not_1/2}, \eqref{ineq_v_pointiweise_1/2}--\eqref{ineq_w_pointiweise_1/2}, and \eqref{def_Le},  we have 
\begin{align*}
|L_\e(\varphi,\psi)|&\le  \sum_{j=1}^{k}|b_j-1| \Big(\norm{\nabla v_0}_{L^p(S_\e^j)}	\|\gamma_+^j (\varphi)\|_{L^{p'}(S_\e^j)}+
\norm{\nabla w_0}_{L^p(S_\e^j)}\|\gamma_+^j (\psi)\|_{L^{p'}(S_\e^j)}\Big) \\
&\qquad+\sum_{j=1}^{k}|d_j| \Big( \norm{\nabla v_0}_{L^p(S_\e^j)}\|\gamma_+^j (\psi)\|_{L^{p'}(S_\e^j)}
+ \norm{\nabla w_0}_{L^p(S_\e^j)}\|\gamma_+^j (\varphi)\|_{L^{p'}(S_\e^j)}\Big) \\
&\le C \e^{|m+\rho|-1+\frac{1}{p}}\norm{(\varphi,\psi)}_{\mc{H}_1\times \mc{H}_1},
    \end{align*}
where $p':=\frac{p}{p-1}$, for some constant $C>0$ independent of $\e$ and $(\varphi,\psi)$.
\end{proof}

For any $\e \in (0,1]$, we   now consider  the minimization problem 
\begin{equation}\label{min_prob}
 \inf\left\{J_\e(\varphi,\psi): (\varphi,\psi)\in \mathcal H_\e\times \mathcal H_\e 
 \text{ and }(\varphi -v_0,\psi-w_0)  \in  \widetilde{\mc{H}}_\e \right\},
\end{equation}
where $J_\e$ and $\widetilde{\mc{H}}_\e$ are defined  in \eqref{def_Je} and  \eqref{def_tilde_H_e}, respectively.

\begin{proposition}\label{prop_potential}
The infimum in \eqref{min_prob} is achieved by a unique couple $(V_\e,W_\e) \in \mc{H}_\e\times\mc{H}_\e$. Furthermore, $(V_\e,W_\e)$ is a weak solution of the problem 
\begin{equation*}
\begin{cases}
-\Delta V_\e=0,  &\text{in } \Omega \setminus \Gamma_\e,\\
-\Delta W_\e=0,  &\text{in } \Omega \setminus \Gamma_\e,\\
V_\e=W_\e=0, &\text{on } \partial \Omega,\\
R^j(V_\e-v_0,W_\e-w_0)=I^j(V_\e-v_0,W_\e-w_0)=0,&\text{on }\Gamma^j_\e \text{ for all } j=1,\dots,k,\\
R^j(\nabla (V_\e-v_0)\cdot \nu^j,\nabla (W_\e-w_0)\cdot \nu^j)=0, &\text{on }\Gamma^j_\e \text{ for all } j=1,\dots,k,\\
I^j(\nabla (V_\e-v_0)\cdot \nu^j, \nabla  (W_\e-w_0)\cdot \nu^j)=0, &\text{on }\Gamma^j_\e \text{ for all } j=1,\dots,k,
\end{cases}
\end{equation*}
i.e.  $(V_\e,W_\e) \in \mc{H}_\e\times \mc{H}_\e$,  $(V_\e-v_0,W_\e-w_0) \in \widetilde{\mc{H}}_\e$, and 
\begin{equation}\label{eq_potential}
\int_{\Omega \setminus \Gamma_\e}  (\nabla V_\e \cdot \nabla \varphi+\nabla W_\e \cdot \nabla \psi)\, dx =L_\e(\varphi,\psi) \quad 
\text{for all } (\varphi,\psi) \in \widetilde{\mc{H}}_\e.
\end{equation}
\end{proposition}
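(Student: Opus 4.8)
The plan is to read \eqref{min_prob} as the minimization of a strictly convex, coercive quadratic functional over a nonempty closed affine subspace of the Hilbert space $\mc{H}_\e\times\mc{H}_\e$, and to produce the minimizer directly via the Riesz representation theorem, thereby obtaining existence, uniqueness and the Euler--Lagrange identity \eqref{eq_potential} in one stroke. First I would collect the functional-analytic preliminaries. The space $\widetilde{\mc{H}}_\e$ of \eqref{def_tilde_H_e} is closed, being the intersection of the kernels of the continuous trace operators $R^j,I^j$, so it is a Hilbert space for the product scalar product induced by $(\cdot,\cdot)_{\mc{H}_\e}$, which is a genuine inner product thanks to the Poincaré inequality \eqref{prop:appendix3_2}. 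Since $\Gamma_0\subset\Gamma_\e$, the restriction of $(v_0,w_0)\in\widetilde{\mc{H}}_0$ to $\Omega\setminus\Gamma_\e$ lies in $\mc{H}_\e\times\mc{H}_\e$, so the constraint set $\{(\varphi,\psi):(\varphi-v_0,\psi-w_0)\in\widetilde{\mc{H}}_\e\}$ is a nonempty closed affine subspace (it contains $(v_0,w_0)$). Finally, $L_\e$ is bounded on $\mc{H}_\e\times\mc{H}_\e$: by Proposition \ref{prop_Le_continuity} it is bounded on $\mc{H}_1\times\mc{H}_1$, the restriction map $\mc{H}_\e\hookrightarrow\mc{H}_1$ is continuous (integration over the smaller set $\Omega\setminus\Gamma_1$), and $L_\e$ only involves traces on $S_\e^j\subset S_1^j$.

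Second, I would set $(\tilde V,\tilde W):=(V_\e-v_0,W_\e-w_0)$ and seek it in $\widetilde{\mc{H}}_\e$. Rewriting \eqref{eq_potential}, the identity to be solved reads, for all $(\varphi,\psi)\in\widetilde{\mc{H}}_\e$,
\begin{equation*}
\big((\tilde V,\tilde W),(\varphi,\psi)\big)_{\mc{H}_\e\times\mc{H}_\e}=L_\e(\varphi,\psi)-\int_{\Omega\setminus\Gamma_\e}(\nabla v_0\cdot\nabla\varphi+\nabla w_0\cdot\nabla\psi)\,dx.
\end{equation*}
The right-hand side is a bounded linear functional on the Hilbert space $\widetilde{\mc{H}}_\e$, by the boundedness of $L_\e$ and the Cauchy--Schwarz inequality for the second term (using $(v_0,w_0)\in\mc{H}_\e\times\mc{H}_\e$). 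The Riesz representation theorem then yields a unique $(\tilde V,\tilde W)\in\widetilde{\mc{H}}_\e$ satisfying it, and $(V_\e,W_\e):=(v_0,w_0)+(\tilde V,\tilde W)$ is the unique element of $\mc{H}_\e\times\mc{H}_\e$ with $(V_\e-v_0,W_\e-w_0)\in\widetilde{\mc{H}}_\e$ solving \eqref{eq_potential}.

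Third, I would identify this solution of \eqref{eq_potential} with the minimizer in \eqref{min_prob} by completing the square. Writing an arbitrary competitor as $(V_\e,W_\e)+(\phi,\chi)$ with $(\phi,\chi)\in\widetilde{\mc{H}}_\e$ and expanding \eqref{def_Je},
\begin{equation*}
J_\e(V_\e+\phi,W_\e+\chi)=J_\e(V_\e,W_\e)+\Big[\int_{\Omega\setminus\Gamma_\e}(\nabla V_\e\cdot\nabla\phi+\nabla W_\e\cdot\nabla\chi)\,dx-L_\e(\phi,\chi)\Big]+\tfrac12\|(\phi,\chi)\|_{\mc{H}_\e\times\mc{H}_\e}^2.
\end{equation*}
The bracketed term vanishes by \eqref{eq_potential}, whence $J_\e(V_\e+\phi,W_\e+\chi)=J_\e(V_\e,W_\e)+\tfrac12\|(\phi,\chi)\|_{\mc{H}_\e\times\mc{H}_\e}^2\ge J_\e(V_\e,W_\e)$, with equality if and only if $(\phi,\chi)=(0,0)$. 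This simultaneously shows that the infimum in \eqref{min_prob} is attained and that the minimizer is unique and coincides with the Riesz solution.

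Finally, the boundary value problem displayed in the statement is the strong form of \eqref{eq_potential}: testing with pairs supported in $\Omega\setminus\Gamma_\e$, on which $L_\e$ and the relevant traces vanish, gives $-\Delta V_\e=-\Delta W_\e=0$ there; the Dirichlet condition is built into $\mc{H}_\e$; the conditions $R^j(V_\e-v_0,W_\e-w_0)=I^j(V_\e-v_0,W_\e-w_0)=0$ are precisely the membership $(V_\e-v_0,W_\e-w_0)\in\widetilde{\mc{H}}_\e$; and the coupled transmission conditions on the normal derivatives arise as natural conditions when integrating \eqref{eq_potential} by parts across each $\Gamma_\e^j$. I expect this last identification to be the only delicate point, since it requires tracking the boundary contributions on the two sides of each crack and matching them, through the matrix entries $b_j,d_j$, against the terms defining $L_\e$; this can be carried out as in the half-integer analysis of \cite{FNOS_multipole}. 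Everything else reduces to standard Hilbert-space facts once the boundedness of $L_\e$ from Proposition \ref{prop_Le_continuity} is in hand.
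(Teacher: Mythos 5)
Your proof is correct, but it organizes the argument in a genuinely different order from the paper. The paper proceeds by the direct method: using Proposition \ref{prop_Le_continuity} it observes that $J_\e$ is convex, continuous and coercive on the closed convex set $(v_0,w_0)+\widetilde{\mc{H}}_\e$, obtains a minimizer, notes that minimality yields \eqref{eq_potential}, and then proves uniqueness separately at the level of \eqref{eq_potential}: given two solutions, their difference lies in $\widetilde{\mc{H}}_\e$, testing the difference of the two identities with it gives vanishing gradients, and the Poincar\'e inequality \eqref{prop:appendix3_2} forces equality. You reverse this: you solve the Euler--Lagrange identity first, via Riesz representation in the Hilbert space $\widetilde{\mc{H}}_\e$ applied to the shifted unknown $(V_\e-v_0,W_\e-w_0)$, and then recover attainment and uniqueness of the minimum by completing the square. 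Both routes rest on the same two ingredients --- the Poincar\'e inequality \eqref{prop:appendix3_2}, which makes $\norm{\cdot}_{\mc{H}_\e\times\mc{H}_\e}$ a complete Hilbert norm on $\widetilde{\mc{H}}_\e$, and the boundedness of $L_\e$ from Proposition \ref{prop_Le_continuity}, correctly transported to $\mc{H}_\e\times\mc{H}_\e$ through the isometric inclusion $\mc{H}_\e\subset\mc{H}_1$, an identification the paper itself uses freely --- but yours is slightly more self-contained: the completed square shows directly that the Riesz solution minimizes strictly, so no weak compactness or lower semicontinuity is invoked, and uniqueness of the minimizer and of the solution of \eqref{eq_potential} come out simultaneously (the paper's separate uniqueness test is, in effect, the uniqueness half of your Riesz step). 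Your closing paragraph on the strong form is more than the paper asks for: the displayed boundary value problem is interpreted there precisely through the membership $(V_\e-v_0,W_\e-w_0)\in\widetilde{\mc{H}}_\e$ together with \eqref{eq_potential}, so the ``delicate'' transmission-condition bookkeeping you defer to the half-integer analysis is not actually needed to establish the proposition as stated, though your sketch of it (harmonicity from compactly supported test pairs, jump conditions from membership, normal-derivative conditions as natural conditions) is sound.
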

\begin{proof}
In view of Proposition \ref{prop_Le_continuity}, the functional $J_\e$  is convex, continuous, and coercive on  the closed convex set
$(v_0,w_0)+\widetilde{\mc{H}}_\e:=\{(\varphi,\psi) \in \mc{H}_\e \times \mc{H}_\e: (\varphi-v_0,\psi-w_0) \in \widetilde{\mc{H}}_\e \}$. Then, there exists $(V_\e,W_\e) \in 
(v_0,w_0)+\widetilde{\mc{H}}_\e$ attaining the infimum in
\eqref{min_prob}, and hence satisfying \eqref{eq_potential}. 

Let $(V_{\e,1},W_{\e,1})$ and $(V_{\e,2},W_{\e,2})$ both satisfy \eqref{eq_potential}, so that
\begin{equation}\label{proof:prop_potential_1}
\int_{\Omega \setminus \Gamma_\e}  \big(\nabla( V_{\e,1}-V_{\e,2} )\cdot \nabla \varphi+\nabla (W_{\e,1}-W_{\e,2}) \cdot \nabla \psi\big)\, dx=0\quad\text{for every $(\varphi,\psi) \in \widetilde{\mc{H}}_\e$}.
\end{equation}
Since  $(V_{\e,1}-V_{\e,2},W_{\e,1}-W_{\e,2}) \in \widetilde{\mc{H}}_\e$, we may test \eqref{proof:prop_potential_1} 
with $(V_{\e,1}-V_{\e,2},W_{\e,1}-W_{\e,2})$ and conclude that $\nabla (V_{\e,1}-V_{\e,2})=\nabla(W_{\e,1}-W_{\e,2})=0$ in $\Omega\setminus\Gamma_\e$.
From \eqref{prop:appendix3_2} we deduce that $V_{\e,1}=V_{\e,2}$ and $W_{\e,1}=W_{\e,2}$, thus proving the uniqueness of the minimizer.
\end{proof}

For every $r>0$, let
\begin{equation}\label{def_eta_r}
\eta_r(x):=\eta\left(\frac{x}{r}\right),
\end{equation}
with $\eta$ as in \eqref{def_eta}.

\begin{proposition}\label{prop_Ee_lower_upper_estimates}
Let  $m$ be as in Proposition \ref{prop_vw_asympotic_not_1/2} if $\rho \neq \frac12$ or as in 
Proposition \ref{prop_vw_asympotic_1/2} if $\rho=\frac12$, with $(v,w)=(v_0,w_0)$. 
Let $\mathcal E_\e$ be defined  in \eqref{def_Ee}.
Then there exist   $C_1>0$ and, for every $p \in \big(1,\min\big\{\frac1\rho,\frac1{1-\rho}\big\}\big)$,
  $C_2=C_2(p)>0$, such that
\begin{equation}\label{ineq_Ee_upper_lower_bounds}
\mc{E}_\e \le C_1 \e^{2|m+\rho|} \quad  \text{ and } \quad \mc{E}_\e \ge - C_2 \e^{2|m+\rho|-2+\frac{2}{p}} \quad \text{for all $\e \in (0,1]$}.
\end{equation}
 In  particular, $\lim_{\e\to0^+}\mc{E}_\e= 0$.
\end{proposition}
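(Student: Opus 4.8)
The plan is to prove the two one-sided bounds separately: the upper bound by evaluating $J_\e$ on an explicit competitor built from the limit eigenfunction, and the lower bound by combining the definition $\mathcal E_\e=J_\e(V_\e,W_\e)$ with the decay of $\|L_\e\|$ established in Proposition \ref{prop_Le_continuity}, via a completion-of-squares argument.

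For the upper bound, I would exploit the minimality in \eqref{def_VeWe}--\eqref{def_Ee}, namely $\mathcal E_\e\le J_\e(\varphi,\psi)$ for every $(\varphi,\psi)\in\mathcal H_\e\times\mathcal H_\e$ with $(\varphi-v_0,\psi-w_0)\in\widetilde{\mathcal H}_\e$, and test it against $(\varphi,\psi)=(\eta_\e v_0,\eta_\e w_0)$, where $\eta_\e(x):=\eta(x/\e)$ is the cut-off in \eqref{def_eta_r}, so that $\eta_\e\equiv1$ on $D_\e$ and $\eta_\e\equiv0$ outside $D_{2\e}$. Admissibility is verified by splitting $\Gamma_\e^j=\Gamma_0^j\cup S_\e^j$: since $r_j<1$ we have $S_\e^j\subset D_{\e r_j}\subset D_\e$, where $\eta_\e\equiv1$, so the difference $\big((\eta_\e-1)v_0,(\eta_\e-1)w_0\big)$ vanishes on $S_\e^j$; on $\Gamma_0^j$ the factor $\eta_\e-1$ is radial, hence continuous across the crack, and can be pulled out of the operators in \eqref{def_R}--\eqref{def_I}, giving $R^j\big((\eta_\e-1)v_0,(\eta_\e-1)w_0\big)=(\eta_\e-1)R^j(v_0,w_0)=0$ and likewise for $I^j$, because $(v_0,w_0)\in\widetilde{\mathcal H}_0$.

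I would then estimate $J_\e(\eta_\e v_0,\eta_\e w_0)$ directly. The Dirichlet part is controlled by the pointwise bounds \eqref{ineq_v_pointiweise_not_1/2}--\eqref{ineq_w_pointiweise_not_1/2} (respectively \eqref{ineq_v_pointiweise_1/2}--\eqref{ineq_w_pointiweise_1/2} when $\rho=\frac12$): expanding $\nabla(\eta_\e v_0)=\eta_\e\nabla v_0+v_0\nabla\eta_\e$, the two contributions are $\int_{D_{2\e}}|x|^{2|m+\rho|-2}\,dx$ and $\e^{-2}\int_{D_{2\e}}|x|^{2|m+\rho|}\,dx$, both $O(\e^{2|m+\rho|})$. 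For the linear part, the key observation is that $\eta_\e\equiv1$ on each $S_\e^j$, whence $L_\e(\eta_\e v_0,\eta_\e w_0)=L_\e(v_0,w_0)$, and this quantity is bounded directly through Hölder's inequality and the same pointwise estimates by $\int_0^{\e r_j}s^{2|m+\rho|-1}\,ds=O(\e^{2|m+\rho|})$. Summing yields $\mathcal E_\e\le C_1\e^{2|m+\rho|}$. For the lower bound, I would write $\mathcal E_\e=\frac12\|(V_\e,W_\e)\|_{\mathcal H_\e\times\mathcal H_\e}^2-L_\e(V_\e,W_\e)$ from \eqref{def_Je}, and, arguing exactly as in Proposition \ref{prop_Le_continuity} (Hölder plus uniform continuity of the traces $\gamma_\pm^j$ on $\mathcal H_\e$), obtain $|L_\e(V_\e,W_\e)|\le C\e^{|m+\rho|-1+1/p}\|(V_\e,W_\e)\|_{\mathcal H_\e\times\mathcal H_\e}$ for every $p\in\big(1,\min\{1/\rho,1/(1-\rho)\}\big)$. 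Setting $X:=\|(V_\e,W_\e)\|_{\mathcal H_\e\times\mathcal H_\e}$ and $A:=\e^{|m+\rho|-1+1/p}$, completing the square gives $\mathcal E_\e\ge\frac12X^2-CAX\ge-\frac{C^2}{2}A^2=-C_2\e^{2|m+\rho|-2+2/p}$.

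Finally, the assertion $\mathcal E_\e\to0$ follows since the upper bound vanishes ($2|m+\rho|>0$), while the lower-bound exponent $2|m+\rho|-2+2/p\to2|m+\rho|>0$ as $p\to1^+$, so any admissible $p$ close to $1$ makes it positive. The only genuinely delicate point is the upper bound: estimating $L_\e(\eta_\e v_0,\eta_\e w_0)$ through its dual norm $O(\e^{|m+\rho|-1+1/p})$ would be too lossy, as it exceeds $\e^{2|m+\rho|}$; it is therefore essential to use the identity $L_\e(\eta_\e v_0,\eta_\e w_0)=L_\e(v_0,w_0)$ and bound the latter \emph{directly} on the segments $S_\e^j$. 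The bookkeeping needed to confirm admissibility of the competitor on $\Gamma_0^j$ versus $S_\e^j$ is the other step requiring care.
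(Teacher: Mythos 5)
Your proof is correct and follows essentially the same route as the paper: the upper bound by testing minimality against the competitor $(\eta_\e v_0,\eta_\e w_0)$ and estimating both the Dirichlet energy and the boundary term directly on the segments $S_\e^j$ via the pointwise bounds of Propositions \ref{prop_vw_asympotic_not_1/2} and \ref{prop_vw_asympotic_1/2}, and the lower bound by combining $\mc{E}_\e=\frac12\norm{(V_\e,W_\e)}^2_{\mc{H}_\e\times\mc{H}_\e}-L_\e(V_\e,W_\e)$ with the dual-norm estimate of Proposition \ref{prop_Le_continuity} and a Young-type absorption, which is exactly the paper's computation in slightly different notation. Your extra checks (admissibility of the competitor on $\Gamma_0^j$ versus $S_\e^j$, and the observation that the dual norm of $L_\e$ is too lossy for the upper bound) are details the paper leaves implicit, but the argument is the same.
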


\begin{proof}
Let $\eta_\e$ be as in \eqref{def_eta_r} with $r=\e$. Then, since $(\eta_\e v_0, \eta_\e w_0)-(v_0,w_0) \in \widetilde{\mc{H}}_\e$, 
\begin{align}\label{proof:prop_Ee_lower_upper_estimates_1}
J_\e(V_\e,&W_\e) \le J_\e(\eta_\e v_0, \eta_\e w_0) \\
&\notag
\le \frac12 \int_{\Omega \setminus \Gamma\e}(|\nabla(\eta_\e v_0)|^2+|\nabla(\eta_\e w_0)|^2  )\, dx\\
&\notag \qquad+\sum_{j=1}^k \int_{S^j_\e}[ |b_j-1| (|\nabla v_0| |v_0| + |\nabla w_0| |w_0|)+|d_j|(|\nabla v_0| |w_0|+|\nabla w_0| |v_0|)] \, dS\\
&\notag\le  \int_{\Omega\cap D_{2\e}}|\nabla \eta_\e|^2(|v_0|^2+|w_0|^2 )\, dx
+\int_{(\Omega\cap D_{2\e})\setminus \Gamma_\e} (|\nabla v_0|^2+|\nabla w_0|^2) \, dx +C \e^{2|m+\rho|} \\
&\notag\le C_1\e^{2|m+\rho|}
\end{align}
for some positive constants $C>0$ and $C_1>0$ independent of $\e$, in view of \eqref{def_Le}, \eqref{def_Je}, \eqref{ineq_v_pointiweise_not_1/2}--\eqref{ineq_w_pointiweise_not_1/2}, and \eqref{ineq_v_pointiweise_1/2}--\eqref{ineq_w_pointiweise_1/2}. The first estimate in \eqref{ineq_Ee_upper_lower_bounds}  follows from \eqref{proof:prop_Ee_lower_upper_estimates_1}.

On the other hand, by \eqref{def_Je} and \eqref{def_Ee} 
\begin{align*}
\norm{(V_\e,W_\e)}_{\mc{H}_1\times \mc{H}_1}^2&=\norm{(V_\e,W_\e)}_{\mc{H}_\e\times \mc{H}_\e}^2= 2\mc{E}_\e +2L_\e(V_\e,W_\e) 
\le 2\mc{E}_\e +2|L_\e(V_\e,W_\e)| \\
& \le   2\mc{E}_\e +2 \norm{L_\e}_{(\mc{H}_1\times \mc{H}_1)^*} \norm{(V_\e,W_\e)}_{\mc{H}_1\times \mc{H}_1} \\
&\le  2\mc{E}_\e +2 \norm{L_\e}^2_{(\mc{H}_1\times \mc{H}_1)^*}+\frac{1}{2}  \norm{(V_\e,W_\e)}^2_{\mc{H}_1\times \mc{H}_1},
\end{align*}
thus implying that
\begin{equation}\label{proof:prop_Ee_lower_upper_estimates_3}
\mc{E}_\e +\norm{L_\e}_{(\mc{H}_1\times \mc{H}_1)^*}^2 \ge \frac{1}{4}  \norm{(V_\e,W_\e)}^2_{\mc{H}_1\times \mc{H}_1}\ge 0. 
\end{equation}
The second estimate in   \eqref{ineq_Ee_upper_lower_bounds} follows from Proposition \ref{prop_Le_continuity} and \eqref{proof:prop_Ee_lower_upper_estimates_3}.
\end{proof}

\begin{proposition}\label{prop_VWe_to_0}
We have  $(V_\e,W_\e) \to 0$ as $\e \to 0^+$ strongly in $\mc{H}_1\times\mc{H}_1$.
\end{proposition}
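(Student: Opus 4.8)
The plan is to read the conclusion directly off the coercivity inequality \eqref{proof:prop_Ee_lower_upper_estimates_3} already obtained in the proof of Proposition \ref{prop_Ee_lower_upper_estimates}, namely
\begin{equation*}
\tfrac14 \norm{(V_\e,W_\e)}^2_{\mc{H}_1\times\mc{H}_1}\le \mc{E}_\e+\norm{L_\e}^2_{(\mc{H}_1\times\mc{H}_1)^*},
\end{equation*}
valid for every $\e\in(0,1]$. The left-hand side is meaningful because $\Gamma_\e\subset\Gamma_1$ differ only by a Lebesgue-null set, so that $(V_\e,W_\e)\in\mc{H}_\e\times\mc{H}_\e$ restricts to an element of $\mc{H}_1\times\mc{H}_1$ with the same gradient norm; this is exactly the identification $\norm{(V_\e,W_\e)}_{\mc{H}_1\times\mc{H}_1}=\norm{(V_\e,W_\e)}_{\mc{H}_\e\times\mc{H}_\e}$ used earlier. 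Granting this inequality, it suffices to show that its right-hand side tends to $0$, which immediately forces $\norm{(V_\e,W_\e)}_{\mc{H}_1\times\mc{H}_1}\to0$, i.e.\ the asserted strong convergence.

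For the two terms on the right I would simply invoke the decay estimates already in hand. By Proposition \ref{prop_Ee_lower_upper_estimates} one has $\lim_{\e\to0^+}\mc{E}_\e=0$ (the bound $\mc{E}_\e\le C_1\e^{2|m+\rho|}$ vanishes since $|m+\rho|>0$, and the lower bound $\mc{E}_\e\ge -C_2\e^{2|m+\rho|-2+2/p}$ vanishes because, for $p$ in the admissible range, the exponent $2|m+\rho|-2+\tfrac2p$ is positive, as one checks separately in the two borderline cases $|m+\rho|=\rho$ and $|m+\rho|=1-\rho$). By Proposition \ref{prop_Le_continuity} one has $\lim_{\e\to0^+}\norm{L_\e}_{(\mc{H}_1\times\mc{H}_1)^*}=0$. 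Hence the right-hand side vanishes in the limit and the proof is complete.

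Should a self-contained argument be preferred, the displayed inequality is itself elementary: by the definitions \eqref{def_Je}--\eqref{def_Ee} one has $\norm{(V_\e,W_\e)}^2_{\mc{H}_\e\times\mc{H}_\e}=2\mc{E}_\e+2L_\e(V_\e,W_\e)$, and estimating $|L_\e(V_\e,W_\e)|\le\norm{L_\e}_{(\mc{H}_1\times\mc{H}_1)^*}\norm{(V_\e,W_\e)}_{\mc{H}_1\times\mc{H}_1}$ followed by Young's inequality to absorb the norm term yields the stated bound. In summary, there is no genuine obstacle: the proposition is a direct corollary of the quantitative estimates for $\mc{E}_\e$ and $\norm{L_\e}$ proved above, and the only point requiring a line of justification is the harmless identification of the $\mc{H}_\e\times\mc{H}_\e$ and $\mc{H}_1\times\mc{H}_1$ norms on $(V_\e,W_\e)$.
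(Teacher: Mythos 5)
Your proof is correct and takes essentially the same route as the paper, which likewise deduces the claim directly from the coercivity inequality \eqref{proof:prop_Ee_lower_upper_estimates_3} combined with $\lim_{\e\to0^+}\mc{E}_\e=0$ (Proposition \ref{prop_Ee_lower_upper_estimates}) and $\lim_{\e\to0^+}\norm{L_\e}_{(\mc{H}_1\times\mc{H}_1)^*}=0$ (Proposition \ref{prop_Le_continuity}). Your additional remarks---the identification $\norm{(V_\e,W_\e)}_{\mc{H}_1\times\mc{H}_1}=\norm{(V_\e,W_\e)}_{\mc{H}_\e\times\mc{H}_\e}$ (valid since $\Gamma_\e\subset\Gamma_1$ up to a null set) and the positivity of the exponent $2|m+\rho|-2+\tfrac2p$ for $p$ in the admissible range---are accurate and merely make explicit what the paper leaves implicit.
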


\begin{proof}
From Proposition \ref{prop_Le_continuity} and Proposition \ref{prop_Ee_lower_upper_estimates} it follows that
$\lim_{\e\to 0^+}\norm{L_\e}_{(\mc{H}_1\times\mc{H}_1)^*}=0$ and $\lim_{\e\to 0^+}\mc{E}_\e=0$. Hence the claim follows from \eqref{proof:prop_Ee_lower_upper_estimates_3}.
\end{proof}

\begin{proposition}\label{prop_Ee_o_VWe}
We have $\mc{E}_\e=o(\norm{(V_\e,W_\e)}_{\mc{H}_\e\times \mc{H}_\e})$ as $\e \to 0^+$.
\end{proposition}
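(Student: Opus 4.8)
The plan is to bound $\mc{E}_\e$ directly in terms of $\norm{(V_\e,W_\e)}_{\mc{H}_\e\times \mc{H}_\e}$, exploiting that $\mc{E}_\e=J_\e(V_\e,W_\e)$ is an explicit \emph{quadratic-minus-linear} expression. First I would record, straight from \eqref{def_Je}, \eqref{def_Ee}, and the identity $\norm{(\varphi,\psi)}_{\mc{H}_\e\times \mc{H}_\e}^2=\int_{\Omega\setminus\Gamma_\e}(|\nabla\varphi|^2+|\nabla\psi|^2)\,dx$, that
\[
\mc{E}_\e=\tfrac12\norm{(V_\e,W_\e)}_{\mc{H}_\e\times \mc{H}_\e}^2-L_\e(V_\e,W_\e).
\]

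Next I would estimate the linear term. Since $\Gamma_\e\subset\Gamma_1$ differ by a set of measure zero, the restriction of $(V_\e,W_\e)$ to $\Omega\setminus\Gamma_1$ lies in $\mc{H}_1\times \mc{H}_1$ with $\norm{(V_\e,W_\e)}_{\mc{H}_1\times \mc{H}_1}=\norm{(V_\e,W_\e)}_{\mc{H}_\e\times \mc{H}_\e}$, as already used in the proof of Proposition \ref{prop_Ee_lower_upper_estimates}. Hence, by the continuity of $L_\e$ established in Proposition \ref{prop_Le_continuity},
\[
|L_\e(V_\e,W_\e)|\le\norm{L_\e}_{(\mc{H}_1\times \mc{H}_1)^*}\,\norm{(V_\e,W_\e)}_{\mc{H}_\e\times \mc{H}_\e}.
\]
Combining the two displays and factoring out one power of the norm yields
\[
|\mc{E}_\e|\le\Big(\tfrac12\norm{(V_\e,W_\e)}_{\mc{H}_\e\times \mc{H}_\e}+\norm{L_\e}_{(\mc{H}_1\times \mc{H}_1)^*}\Big)\norm{(V_\e,W_\e)}_{\mc{H}_\e\times \mc{H}_\e}.
\]
The coefficient in parentheses tends to $0$ as $\e\to0^+$: the first summand by Proposition \ref{prop_VWe_to_0}, the second by Proposition \ref{prop_Le_continuity}. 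This is exactly the assertion $\mc{E}_\e=o(\norm{(V_\e,W_\e)}_{\mc{H}_\e\times \mc{H}_\e})$, the bound holding trivially also at any $\e$ for which $(V_\e,W_\e)=(0,0)$.

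I do not expect a genuine obstacle here: the statement is a short consequence of the two vanishing facts already in hand, namely $\norm{(V_\e,W_\e)}_{\mc{H}_\e\times \mc{H}_\e}\to0$ and $\norm{L_\e}_{(\mc{H}_1\times \mc{H}_1)^*}\to0$, which are precisely the two quantities that control the ratio $|\mc{E}_\e|/\norm{(V_\e,W_\e)}_{\mc{H}_\e\times \mc{H}_\e}$. The only minor point requiring care is the identification of the $\mc{H}_\e$- and $\mc{H}_1$-norms of $(V_\e,W_\e)$, so that the dual estimate for $L_\e$ from Proposition \ref{prop_Le_continuity} may legitimately be applied to a function defined on $\Omega\setminus\Gamma_\e$.
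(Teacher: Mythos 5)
Your proposal is correct and coincides with the paper's own proof: the authors likewise bound $|\mc{E}_\e|\le\frac12\norm{(V_\e,W_\e)}^2_{\mc{H}_\e\times \mc{H}_\e}+\norm{L_\e}_{(\mc{H}_1\times \mc{H}_1)^*}\norm{(V_\e,W_\e)}_{\mc{H}_\e\times \mc{H}_\e}$ directly from \eqref{def_Ee} and \eqref{def_Je} and then invoke Propositions \ref{prop_Le_continuity} and \ref{prop_VWe_to_0}. Your additional remark identifying the $\mc{H}_\e$- and $\mc{H}_1$-norms of $(V_\e,W_\e)$ is a legitimate point, already used implicitly in the paper (cf.\ the first line of the chain of estimates in the proof of Proposition \ref{prop_Ee_lower_upper_estimates}).
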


\begin{proof}
By \eqref{def_Ee} and \eqref{def_Je}
\begin{equation*}
|\mc{E}_\e| \le \frac{1}{2}\norm{(V_\e,W_\e)}^2_{\mc{H}_\e\times \mc{H}_\e}+
\norm{L_\e}_{(\mc{H}_1\times \mc{H}_1)^*}\norm{(V_\e,W_\e)}_{\mc{H}_\e\times \mc{H}_\e},
\end{equation*}
hence the conclusion follows from Propositions \ref{prop_Le_continuity} and \ref{prop_VWe_to_0}.
\end{proof}

\begin{proposition}\label{prop_norm_L2_norm_nabla}
We have 
\begin{equation*}
 \int_{\Omega}( V_\e^2+W_\e^2) \, dx  =o(\norm{(V_\e,W_\e)}^2_{\mc{H}_\e \times \mc{H}_\e}) \text{ as } \e \to 0^+. 
\end{equation*}
\end{proposition}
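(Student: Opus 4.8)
The plan is to argue by contradiction through a normalization (blow-up) in the energy norm. Suppose the conclusion fails. Then there exist $\delta>0$ and a sequence $\e_n\to0^+$ such that, writing $d_n:=\norm{(V_{\e_n},W_{\e_n})}_{\mc{H}_{\e_n}\times\mc{H}_{\e_n}}>0$, one has $\int_\Omega(V_{\e_n}^2+W_{\e_n}^2)\,dx\ge\delta\,d_n^2$. I would then set $(\hat V_n,\hat W_n):=d_n^{-1}(V_{\e_n},W_{\e_n})$, so that
\[
\int_{\Omega\setminus\Gamma_{\e_n}}(|\nabla\hat V_n|^2+|\nabla\hat W_n|^2)\,dx=1
\qquad\text{and}\qquad
\int_\Omega(\hat V_n^2+\hat W_n^2)\,dx\ge\delta.
\]
Since $\Gamma_{\e_n}\subset\Gamma_1$, the pairs $(\hat V_n,\hat W_n)$ are bounded in $\mc{H}_1\times\mc{H}_1$; hence, up to a subsequence, they converge weakly in $\mc{H}_1\times\mc{H}_1$ to some $(\hat V,\hat W)$ and, by the compact embedding $\mc{H}_1\hookrightarrow L^2(\Omega)$, strongly in $L^2(\Omega)\times L^2(\Omega)$. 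In particular $\int_\Omega(\hat V^2+\hat W^2)\,dx\ge\delta$, so $(\hat V,\hat W)\neq(0,0)$.

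The second step is to identify the limiting equation. Dividing \eqref{eq_potential} by $d_n$ gives, for every $(\varphi,\psi)\in\widetilde{\mc H}_{\e_n}$,
\[
\int_{\Omega\setminus\Gamma_{\e_n}}(\nabla\hat V_n\cdot\nabla\varphi+\nabla\hat W_n\cdot\nabla\psi)\,dx=\tfrac{1}{d_n}L_{\e_n}(\varphi,\psi).
\]
I would test this with pairs $(\varphi,\psi)\in\widetilde{\mc H}_0$ that vanish in a fixed neighbourhood $D_{r_0}$ of the origin. For $\e_n<r_0$ such pairs belong to $\widetilde{\mc H}_{\e_n}$ (the extra conditions on $S_{\e_n}^j\subset D_{\e_n}$ hold trivially since the traces vanish there) and, crucially, $L_{\e_n}(\varphi,\psi)=0$, because the integrals in \eqref{def_Le} are supported on $S_{\e_n}^j$. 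Passing to the weak limit (the cracks $\Gamma_{\e_n}$ coincide with $\Gamma_0$ on the support of $\nabla\varphi,\nabla\psi$) yields $\int_{\Omega\setminus\Gamma_0}(\nabla\hat V\cdot\nabla\varphi+\nabla\hat W\cdot\nabla\psi)\,dx=0$ for all $(\varphi,\psi)\in\widetilde{\mc H}_0$ vanishing near $0$.

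The third step is to promote this to a contradiction. First I would check that $(\hat V,\hat W)\in\widetilde{\mc H}_0$: the homogeneous jump relations $R^j(\hat V_n,\hat W_n)=I^j(\hat V_n,\hat W_n)=0$ hold on $\Gamma_0^j$ (since $(V_{\e_n}-v_0,W_{\e_n}-w_0)\in\widetilde{\mc H}_{\e_n}$ and $(v_0,w_0)\in\widetilde{\mc H}_0$ give $R^j(V_{\e_n},W_{\e_n})=R^j(v_0,w_0)=0$ there), and they pass to the limit thanks to the compactness of the trace operators $\gamma_\pm^j$ on $\mc H_1$; moreover, since $\hat V_n,\hat W_n$ do not jump across $\{ta^j:t>\e_n\}$, the matching of the one-sided traces survives in the limit on the whole positive ray, so $(\hat V,\hat W)$ does not jump across $S^j$ and therefore lies in $\mc H_0\times\mc H_0$. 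Finally, the identity above extends to all test functions in $\widetilde{\mc H}_0$ by density of the pairs vanishing near the origin: this is exactly the density built into the definition of $H^{1,0}_0(\Omega,\C)$ as a completion, equivalently the removability of the point $0$, which is guaranteed by the magnetic Hardy inequality forcing $\hat V/|\cdot|,\hat W/|\cdot|\in L^2$. Testing the extended identity with $(\hat V,\hat W)$ itself and invoking the Poincaré inequality \eqref{prop:appendix3_2} then yields $(\hat V,\hat W)=(0,0)$ — equivalently, $0$ would be an eigenvalue of \eqref{prob_eigenvalue_gauged} with $\e=0$, contradicting the positivity of all $\la_{0,n}$. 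This contradicts $(\hat V,\hat W)\neq(0,0)$ and proves the claim.

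I expect the main obstacle to be the control of the moving slits $S_{\e_n}^j$ when identifying the limit: one must show that the possible jumps of $\hat V_n,\hat W_n$ across the shrinking segments disappear in the limit, so that $(\hat V,\hat W)$ gains $H^1$-regularity across the positive rays and genuinely belongs to $\widetilde{\mc H}_0$. Combined with the density (removability) of the origin — where the magnetic Hardy inequality is the essential input — this is the delicate point, whereas the compactness and weak-limit passages are routine.
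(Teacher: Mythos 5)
Your proof is correct and follows essentially the same route as the paper's: a contradiction-by-normalization argument (you normalize in the energy norm where the paper normalizes in $L^2$, which is equivalent under the negated statement), testing \eqref{eq_potential} with pairs of $\widetilde{\mc{H}}_0$ vanishing near the origin so that $L_{\e_n}$ vanishes, identification of the weak limit as a nontrivial element of $\widetilde{\mc{H}}_0$, and the conclusion via density of such test pairs together with the Poincar\'e inequality \eqref{prop:appendix3_2}. The only difference is presentational: the paper delegates the two delicate points — that the weak limit belongs to $\mc{H}_0$ and that $\widetilde{\mc{H}}_{0,0}$ is dense in $\widetilde{\mc{H}}_0$ — to \cite[Proposition 3.3]{FNOS_multipole} and \cite[Lemma 3.4]{FNOS_multipole}, whereas you sketch them directly, correctly identifying the Hardy-inequality/removability mechanism (valid here since $\rho\in(0,1)$) that underlies the density step.
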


\begin{proof}
We argue by contradiction, assuming that there exist a positive constant $C>0$ and a sequence $\{\e_n\}_{n \in \mathbb{N}}$ 
such that $\lim_{n \to \infty}\e_n=0$, $(V_{\e_n},W_{\e_n})\not\equiv(0,0)$, and
\begin{equation}\label{proof:norm_L2_norm_nabla_1}
\int_{\Omega}(V_{\e_n}^2+W_{\e_n}^2) \, dx \ge C\int_{\Omega \setminus \Gamma_{\e_n}}( |\nabla V_{\e_n}|^2 +|\nabla W_{\e_n}|^2)\, dx.
\end{equation}
Letting, for every  $n \in \mathbb{N}$,
\begin{equation*}
Y_n:=\frac{V_{\e_n}}{\norm{(V_{\e_n},W_{\e_n})}_{L^2(\Omega)\times L^2(\Omega)}} \quad  \text { and } \quad 
Z_n:=\frac{W_{\e_n}}{\norm{(V_{\e_n},W_{\e_n})}_{L^2(\Omega)\times L^2(\Omega)}},
\end{equation*} 
we have
\begin{equation}\label{eq:normalization}
\int_{\Omega}(Y_n^2+Z_n^2)\, dx =1
\end{equation}
and, by \eqref{proof:norm_L2_norm_nabla_1}, $\{Y_n\}_{n\in\mb{N}}$ and $\{Z_n\}_{n\in\mb{N}}$ are bounded in $\mc{H}_1$.
Hence there exist $Y \in \mc{H}_1$ and $Z\in \mc{H}_1$  such that $Y_n \rightharpoonup Y$ and $Z_n  \rightharpoonup  Z$ weakly in $\mc{H}_1$ as $n \to \infty$, up to a subsequence.
Since $Y_n,Z_n \in \mc{H}_{\e_n}$ for every $n \in \mathbb{N}$,
by \cite[Proposition 3.3]{FNOS_multipole} we conclude that  $Y,Z \in \mc{H}_0$.
Furthermore, in view of  
\eqref{eq:normalization} and the compactness of the natural embedding 
\begin{equation}\label{eq:compact-embedding}
    \mc{H}_1 \times \mc{H}_1 
\hookrightarrow L^2(\Omega) \times L^2(\Omega),
\end{equation}
see  \cite[Remark 3.1]{FNOS_multipole}, we have
\begin{equation}\label{proof:norm_L2_norm_nabla_1.5}
\int_{\Omega}(Y^2+Z^2)\, dx =1.
\end{equation}
Since 
\begin{equation*}
\bigg(Y_n -\frac{v_0}{\norm{(V_{\e_n}W_{\e_n})}_{L^2(\Omega)\times L^2(\Omega)}},
Z_n -\frac{w_0}{\norm{(V_{\e_n}W_{\e_n})}_{L^2(\Omega)\times L^2(\Omega)}}\bigg) \in \widetilde{\mc{H}}_{\e_n}
\quad\text{for all }n\in\N,
\end{equation*}
we have  $R^j(Y_n,Z_n)=I^j(Y_n,Z_n)=0$ on $\Gamma_0^j$ for every $j=1,\dots, k$  and  $n \in \mathbb{N}$. Then, by continuity of the operators in \eqref{def_traces}, we conclude that $R^j(Y,Z)=I^j(Y,Z)=0$ on $\Gamma_0^j$ for every $j=1,\dots, k$, i.e. $(Y,Z) \in \widetilde{\mc{H}}_0$.

Let $(\varphi,\psi) \in \widetilde{\mc{H}}_{0,0}$, where 
\begin{equation}\label{def_tilde_H_00}
\widetilde{\mc{H}}_{0,0}:=\{(\varphi,\psi) \in \widetilde{\mc{H}}_0:\varphi\equiv 0 \text{ and } \psi\equiv 0 \text{ in a neighbourhood of } 0\}.
\end{equation}
If $n$ is large enough, then $(\varphi,\psi) \in \widetilde{\mc{H}}_{\e_n}$ and $L_{\e_n}(\varphi,\psi)=0$. Hence, testing \eqref{eq_potential} with 
$(\varphi,\psi)$ yields 
\begin{equation*}
\int_{\Omega \setminus \Gamma_{\e_n}} (\nabla Y_n \cdot \nabla \varphi +\nabla Z_n \cdot \nabla \psi) \, dx =
\|(V_{\e_n},W_{\e_n})\|^{-1}_{L^2(\Omega)\times L^2(\Omega)}
L_{\e_n}(\varphi,\psi)=0.
\end{equation*}
Passing to the limit as $n \to \infty$ we conclude that 
\begin{equation}\label{proof:norm_L2_norm_nabla_2}
\int_{\Omega \setminus \Gamma_0} (\nabla Y \cdot \nabla \varphi +\nabla Z \cdot \nabla \psi) \, dx   =0
\end{equation}
for every $(\varphi,\psi) \in \widetilde{\mc{H}}_{0,0}$. 
Arguing as \cite[Lemma 3.4]{FNOS_multipole}, one can prove that the space  $\widetilde{\mc{H}}_{0,0}$ defined in \eqref{def_tilde_H_00} is dense in $\widetilde{\mc{H}}_0$. Hence
 \eqref{proof:norm_L2_norm_nabla_2} actually holds for all $(\varphi,\psi) \in \widetilde{\mc{H}}_0$. Then we may test \eqref{proof:norm_L2_norm_nabla_2} with $(Y,Z)$, thus obtaining  $Y=Z=0$ in view of \eqref{prop:appendix3_2}, and contradicting \eqref{proof:norm_L2_norm_nabla_1.5}.
\end{proof}

\section{Asymptotic expansion of the eigenvalue variation}\label{sec_asymptotic_eigenvalue_variation}
For every $\e\in [0,1]$, we define  the following bilinear form on the space $\widetilde{\mc{H}}_\e$ introduced in \eqref{def_tilde_H_e}:
\begin{equation}\label{def_qe}
q_\e:\widetilde{\mc{H}}_\e \times \widetilde{\mc{H}}_\e  \to \R, \quad 
q_\e\big((\varphi_1,\psi_1),(\varphi_2,\psi_2)\big):=\int_{\Omega \setminus \Gamma_\e}(\nabla\varphi_1 \cdot \nabla\varphi_2+\nabla\psi_1\cdot\nabla\psi_2 )\, dx,
\end{equation}
i.e. $q_\e$ is the scalar product associated to the norm \eqref{def_norm_HexHe}. For the sake of simplicity, we still denote with $q_\e$ the associated quadratic form
\begin{equation*}
q_\e:\widetilde{\mc{H}}_\e\to [0,+\infty), \quad 
q_\e(\varphi,\psi):=\int_{\Omega \setminus \Gamma_\e}(|\nabla\varphi|^2+|\nabla\psi|^2 )\, dx
=\|(\varphi,\psi)\|^2_{\mathcal H_\e\times \mathcal H_\e}.
\end{equation*}
By the Riesz Representation Theorem,  for every $\e\in[0,1]$ there exists a linear and continuous operator  
$\mathcal{F}_\e:\widetilde{\mc{H}}_\e \to \widetilde{\mc{H}}_\e$ such that, for every $(\varphi_1,\psi_1),(\varphi_2,\psi_2)\in \widetilde{\mathcal H}_\e$ 
\begin{equation}\label{def_Fe}
 q_\e(\mathcal{F}_\e(\varphi_1,\psi_1),(\varphi_2,\psi_2))=\big((\varphi_1,\psi_1),(\varphi_2,\psi_2)\big)_{L^2(\Omega)\times L^2(\Omega)}.
\end{equation}
Abstract spectral theory, see e.g. \cite{H_spectral}, together with compactness of the embedding \eqref{eq:compact-embedding}, yield the following  preliminary result,
see also \cite[Proposition 5.1]{FNOS_multipole}.
\begin{proposition}\label{prop_spectral}
Let $\e\in[0,1]$ and $\mathcal{F}_\e$ be as in \eqref{def_Fe}. 
 Then 
\begin{enumerate}[\rm (i)]
\item $\mathcal{F}_\e$ is symmetric, compact and non-negative; in particular $0$ belongs to its spectrum $\sigma(\mathcal{F}_\e)$.
\item $\sigma(\mathcal{F}_\e)\setminus\{0\}=\{\mu_{n,\e}\}_{n\in\N\setminus\{0\}}$, where $\mu_{n,\e}=1/\la_{\e,n}$ for any $n \in \mb{N}\setminus\{0\}$.
\item For any $\mu \in \R$ and $(\varphi,\psi) \in  \widetilde{\mc{H}}_\e$
\begin{equation*}
\left(\mathop{\rm{dist}}(\mu,\sigma(\mathcal{F}_\e)\right)^2 \le \frac{q_\e(\mathcal{F}_\e(\varphi,\psi)-\mu (\varphi,\psi))}{q_\e(\varphi,\psi)}.
\end{equation*}
\end{enumerate}
\end{proposition}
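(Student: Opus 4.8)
The plan is to treat $\mc{F}_\e$ as the resolvent-type operator canonically associated to the weak eigenvalue problem \eqref{eq_multipole_gauged_e} and to invoke the spectral theorem for compact self-adjoint operators, following the scheme of \cite[Proposition 5.1]{FNOS_multipole}. First I would establish the algebraic properties in (i) directly from the defining identity \eqref{def_Fe}. Symmetry of $\mc{F}_\e$ with respect to $q_\e$ follows because the right-hand side of \eqref{def_Fe} is symmetric in $(\varphi_1,\psi_1)$ and $(\varphi_2,\psi_2)$ (the functions being real-valued) and $q_\e$ is itself a scalar product; non-negativity is immediate upon choosing $(\varphi_2,\psi_2)=(\varphi_1,\psi_1)$, since the right-hand side then equals $\|(\varphi_1,\psi_1)\|^2_{L^2(\Omega)\times L^2(\Omega)}\ge 0$. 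Boundedness of $\mc{F}_\e$ follows by combining \eqref{def_Fe} with the Cauchy--Schwarz inequality and the continuity of the embedding $\widetilde{\mc{H}}_\e\hookrightarrow L^2(\Omega)\times L^2(\Omega)$.

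For compactness I would exploit the compact embedding \eqref{eq:compact-embedding}. Given a bounded sequence $\{(\varphi_n,\psi_n)\}$ in $\widetilde{\mc{H}}_\e$, a subsequence converges strongly in $L^2(\Omega)\times L^2(\Omega)$; testing \eqref{def_Fe} with $\mc{F}_\e(\varphi_n-\varphi_m,\psi_n-\psi_m)$ expresses $\|\mc{F}_\e(\varphi_n,\psi_n)-\mc{F}_\e(\varphi_m,\psi_m)\|^2_{\mc{H}_\e\times\mc{H}_\e}$ as an $L^2$-pairing, which by Cauchy--Schwarz and the continuity of $\mc{F}_\e$ is controlled by $C\|(\varphi_n-\varphi_m,\psi_n-\psi_m)\|_{L^2(\Omega)\times L^2(\Omega)}$; the $L^2$-convergence then forces the image sequence to be Cauchy in $\widetilde{\mc{H}}_\e$. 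Since $\widetilde{\mc{H}}_\e$ is infinite-dimensional, a compact operator has $0$ in its spectrum, completing (i).

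To prove (ii) I would establish the reciprocal correspondence between eigenpairs. Writing $\mc{F}_\e(\varphi,\psi)=\mu(\varphi,\psi)$ with $\mu\ne 0$ through \eqref{def_Fe} and dividing by $\mu$ shows that this is equivalent to $(\varphi,\psi)$ solving the weak formulation \eqref{eq_multipole_gauged_e} with $\la=1/\mu$; thus the nonzero eigenvalues of $\mc{F}_\e$ correspond bijectively to the eigenvalues of \eqref{prob_eigenvalue_gauged}, which are exactly the $\{\la_{\e,n}\}$, yielding $\sigma(\mc{F}_\e)\setminus\{0\}=\{1/\la_{\e,n}\}_{n\ge 1}$. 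I would also record that $\mc{F}_\e$ is injective: if $\mc{F}_\e(\varphi,\psi)=0$, testing \eqref{def_Fe} with $(\varphi,\psi)$ forces $\|(\varphi,\psi)\|_{L^2(\Omega)\times L^2(\Omega)}=0$, hence $(\varphi,\psi)=0$ by the Poincar\'e inequality \eqref{prop:appendix3_2}; so $0$ lies in the spectrum without being an eigenvalue, and compactness guarantees that the positive eigenvalues $1/\la_{\e,n}$ accumulate only at $0$, consistently with the divergence of $\{\la_{\e,n}\}$.

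Finally, (iii) is a direct consequence of the spectral theorem for bounded self-adjoint operators: diagonalizing $\mc{F}_\e$ through its spectral measure $E_\lambda$ gives $q_\e(\mc{F}_\e(\varphi,\psi)-\mu(\varphi,\psi))=\int_{\sigma(\mc{F}_\e)}(\lambda-\mu)^2\,d\|E_\lambda(\varphi,\psi)\|^2\ge(\mathop{\rm{dist}}(\mu,\sigma(\mc{F}_\e)))^2\,q_\e(\varphi,\psi)$, since $(\lambda-\mu)^2\ge(\mathop{\rm{dist}}(\mu,\sigma(\mc{F}_\e)))^2$ for every $\lambda$ in the spectrum; dividing by $q_\e(\varphi,\psi)$ gives the inequality. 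I do not expect any serious obstacle, since the whole statement is abstract spectral theory applied to the scalar product $q_\e$ and the compact embedding \eqref{eq:compact-embedding}; the only points requiring care are ensuring the compactness estimate correctly combines the continuity of $\mc{F}_\e$ with the compact embedding, and cleanly matching \eqref{eq_multipole_gauged_e} with the eigenvalue equation for $\mc{F}_\e$ to pin down the reciprocal relation $\mu_{n,\e}=1/\la_{\e,n}$.
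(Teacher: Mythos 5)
Your proposal is correct and takes essentially the same approach as the paper: the paper proves Proposition \ref{prop_spectral} by simply invoking abstract spectral theory for compact, symmetric, non-negative operators together with the compact embedding \eqref{eq:compact-embedding} (citing \cite{H_spectral} and \cite[Proposition 5.1]{FNOS_multipole}), and your argument is the standard fleshing-out of exactly those ingredients. All your steps check out — symmetry, non-negativity and boundedness read off from \eqref{def_Fe}, compactness via the estimate $\norm{\mc{F}_\e(\varphi,\psi)}_{\mc{H}_\e\times\mc{H}_\e}\le C\norm{(\varphi,\psi)}_{L^2(\Omega)\times L^2(\Omega)}$ combined with the compact embedding, the reciprocal correspondence between nonzero eigenvalues of $\mc{F}_\e$ and the weak formulation \eqref{eq_multipole_gauged_e} with $\la=1/\mu$, and the spectral-measure inequality for (iii).
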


For some fixed $n_0 \in \mb{N}\setminus \{0\}$, let $u_0$ and  $(v_0,w_0)$ 
be as in \eqref{def_u0} and \eqref{def_v0_w0}, respectively (with the further assumption \eqref{eq:propertyP} if $\rho=\frac12$).  
 In  order to obtain an asymptotic expansion of the eigenvalue variation, we make the additional assumption that 
 \begin{equation*}
    \la_0:=\la_{0,n_0}\quad\text{satisfies \eqref{hp_la0_simple}}.
\end{equation*}
Therefore, by \eqref{limit_lae_la0}, also the eigenvalue $\la_{\e,n_0}$ is simple (as an eigenvalue of \eqref{prob_Aharonov-Bohm_multipole}, double as an eigenvalue of \eqref{prob_eigenvalue_gauged}) if $\e$ is small enough.  To simplify the notations, we will write from now on 
\begin{equation*}
\la_\e:=\la_{\e,n_0}.
\end{equation*} 
For $\e$ small, let $u_\e$ be an eigenfunction of problem \eqref{prob_Aharonov-Bohm_multipole} associated to $\la_\e$ and 
\begin{equation}\label{def_ve_we}
v_\e:=\mathop{\rm{Re}}(e^{-i\Theta_\e(x)} u_\e), \quad  w_\e:= \mathop{\rm{Im}}(e^{-i\Theta_\e(x)} u_\e).
\end{equation}
As observed in Section \ref{sec_preliminaries}, $(v_\e,w_\e) \in \widetilde{\mc{H}}_\e$  solves \eqref{prob_eigenvalue_gauged} in the weak sense \eqref{eq_multipole_gauged_e} with $\lambda=\lambda_\e$.
Furthermore, we choose $u_\e$ in the only possible way such that \eqref{hp_ue_remormalised} holds,  and consequently $(v_\e,w_\e)$ in the only possible way such that
\begin{equation}\label{hp_ve_we}
\int_{\Omega} (v_\e^2+w_\e^2) \, dx =1,\quad  \int_{\Omega} (w_\e v_0-v_\e w_0) \,dx =0 \quad \text{ and } \quad \int_{\Omega} (v_\e v_0+w_\e w_0) \,dx >0,
\end{equation}
see Remark \ref{rermark_u_vw}.

\begin{lemma}\label{remark_spect_stability_eigenfunctions}
If $u_\e$ is chosen as in \eqref{hp_ue_remormalised}, so that $(v_\e,w_\e)$ is normalized as in  \eqref{hp_ve_we}, then 
\begin{equation}\label{eq_limit_vewe_v0wo}
 v_\e \to v_0 \quad \text{and}\quad   w_\e \to w_0 \quad \text{ strongly  in } \mc{H}_1  \text{ as } \e \to 0^+.
\end{equation}
\end{lemma}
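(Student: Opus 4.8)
The plan is to prove the spectral stability of eigenfunctions by a compactness argument: I establish a uniform bound, extract a weak limit, identify it as $(v_0,w_0)$ using simplicity together with the normalization \eqref{hp_ve_we}, and finally upgrade weak convergence to strong convergence via convergence of norms. First I would test the weak formulation \eqref{eq_multipole_gauged_e} (with $\lambda=\la_\e$) against $(v_\e,w_\e)$ itself, which gives
\[
\norm{(v_\e,w_\e)}^2_{\mc{H}_\e\times\mc{H}_\e}=\la_\e\int_\Omega(v_\e^2+w_\e^2)\,dx=\la_\e .
\]
Since $\la_\e\to\la_0$ by \eqref{limit_lae_la0}, and since $\Gamma_\e\subset\Gamma_1$ forces the $\mc{H}_\e$- and $\mc{H}_1$-norms to coincide on $\mc{H}_\e$, the family $(v_\e,w_\e)$ is bounded in $\mc{H}_1\times\mc{H}_1$. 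Along a subsequence $\e_n\to0^+$ I then obtain $(v_{\e_n},w_{\e_n})\rightharpoonup(v_*,w_*)$ weakly in $\mc{H}_1\times\mc{H}_1$ and, by the compact embedding \eqref{eq:compact-embedding}, strongly in $L^2(\Omega)\times L^2(\Omega)$; passing the first normalization in \eqref{hp_ve_we} to the limit yields $\int_\Omega(v_*^2+w_*^2)\,dx=1$, so $(v_*,w_*)\neq(0,0)$.

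Next, mirroring the argument in the proof of Proposition \ref{prop_norm_L2_norm_nabla}, I would identify $(v_*,w_*)$ as an eigenfunction of the limit gauged problem. By \cite[Proposition 3.3]{FNOS_multipole} one has $v_*,w_*\in\mc{H}_0$, and the continuity of the trace operators \eqref{def_traces} applied to the identities $R^j(v_{\e_n},w_{\e_n})=I^j(v_{\e_n},w_{\e_n})=0$ on $\Gamma_0^j\subset\Gamma_{\e_n}^j$ shows that $(v_*,w_*)\in\widetilde{\mc{H}}_0$. For any $(\varphi,\psi)\in\widetilde{\mc{H}}_{0,0}$ (defined in \eqref{def_tilde_H_00}), which lies in $\widetilde{\mc{H}}_{\e_n}$ once $\e_n$ is small because it vanishes near $0$, I pass to the limit in \eqref{eq_multipole_gauged_e} using weak convergence of the gradients, strong $L^2$-convergence, and $\la_{\e_n}\to\la_0$, obtaining
\[
\int_{\Omega\setminus\Gamma_0}(\nabla v_*\cdot\nabla\varphi+\nabla w_*\cdot\nabla\psi)\,dx=\la_0\int_\Omega(v_*\varphi+w_*\psi)\,dx .
\]
By the density of $\widetilde{\mc{H}}_{0,0}$ in $\widetilde{\mc{H}}_0$ (as in \cite[Lemma 3.4]{FNOS_multipole}) this identity extends to all $(\varphi,\psi)\in\widetilde{\mc{H}}_0$, so $(v_*,w_*)$ solves \eqref{eq_multipole_gauged_0} with $\lambda=\la_0$.

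Now I would invoke the simplicity hypothesis \eqref{hp_la0_simple}: as an eigenvalue of the gauged problem $\la_0$ has a two-dimensional eigenspace spanned by the $L^2$-orthonormal pair $(v_0,w_0)$ and $(-w_0,v_0)$, so I may write $(v_*,w_*)=a(v_0,w_0)+b(-w_0,v_0)$. Passing the remaining two conditions of \eqref{hp_ve_we} to the limit and using $\int_\Omega(v_0^2+w_0^2)\,dx=1$ gives $a^2+b^2=1$, $\int_\Omega(w_*v_0-v_*w_0)\,dx=b=0$, and $\int_\Omega(v_*v_0+w_*w_0)\,dx=a\ge0$; hence $a=1$, $b=0$ and $(v_*,w_*)=(v_0,w_0)$. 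Because this limit is independent of the chosen subsequence, the whole family converges weakly to $(v_0,w_0)$. The strong convergence \eqref{eq_limit_vewe_v0wo} then follows in the Hilbert space $\mc{H}_1\times\mc{H}_1$ from weak convergence together with the convergence of norms $\norm{(v_\e,w_\e)}^2_{\mc{H}_1\times\mc{H}_1}=\la_\e\to\la_0=\norm{(v_0,w_0)}^2_{\mc{H}_1\times\mc{H}_1}$.

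The main obstacle I anticipate is the identification step: guaranteeing that the weak limit is a genuine, nonzero eigenfunction of the limit problem at $\la_0$ (which rests on the passage to the limit in the variational identity and the density of $\widetilde{\mc{H}}_{0,0}$), and then pinning down the coefficients $a,b$ from the normalization \eqref{hp_ve_we}. It is precisely here that simplicity of $\la_0$ is indispensable, since without it the weak limit could be any nonzero element of a higher-dimensional eigenspace and the conditions \eqref{hp_ve_we} would not single out $(v_0,w_0)$.
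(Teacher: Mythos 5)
Your proof is correct and follows essentially the same route as the paper's: testing \eqref{eq_multipole_gauged_e} with $(v_\e,w_\e)$ to get the bound $\norm{(v_\e,w_\e)}^2_{\mc{H}_\e\times\mc{H}_\e}=\la_\e$, identifying the weak limit as an eigenfunction via the density of $\widetilde{\mc{H}}_{0,0}$ in $\widetilde{\mc{H}}_0$, pinning down the coefficients $a,b$ from the normalization \eqref{hp_ve_we}, and upgrading to strong convergence through the norm convergence $\la_\e\to\la_0$ together with the Urysohn subsequence principle. No gaps to report.
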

\begin{proof}Indeed, testing \eqref{eq_multipole_gauged_e} with $(v_\e,w_\e)$ we obtain  $\norm{(v_\e,w_\e)}^2_{\mc{H}_\e\times \mc{H}_\e}=\la_\e$ 
by \eqref{hp_ve_we}.  It follows that,  thanks to $\eqref{limit_lae_la0}$, $\{(v_\e,w_\e)\}_{\e \in (0,1]}$ is bounded in $\mc{H}_1\times \mc{H}_1$. Hence,  there exist a sequence $\{\e_n\}_{n \in \mb{N}}\subset (0,1)$ and $(v,w) \in \mc{H}_1\times \mc{H}_1$ such that $v_{\e_n} \rightharpoonup v$ and 
$w_{\e_n} \rightharpoonup w$ weakly in  $\mc{H}_1$ as $n \to \infty$. By \cite[Proposition 3.3]{FNOS_multipole}, $v,w \in \mc{H}_0$, while  
$\int_\Omega (v^2+w^2)\,dx=1$ by \cite[Remark 3.1]{FNOS_multipole}  and \eqref{hp_ve_we}. 
We have  that $R^j(v_{\e_n},w_{\e_n})=I^j(v_{\e_n},w_{\e_n})=0$ on $\Gamma_0^j$ for every $j=1,\dots, k$  and  $n \in \mathbb{N}$. Then,  by the continuity of the trace operators in \eqref{def_traces}, $R^j(v,w)=I^j(v,w)=0$ on $\Gamma_0^j$ for every $j=1,\dots, k$, so that  $(v,w) \in \mc{\widetilde{H}}_0$. 
If $ (\varphi,\psi) \in  \mc{\widetilde{H}}_{0,0}$, see \eqref{def_tilde_H_00}, then $(\varphi,\psi)\in \mc{\widetilde{H}}_{\e_n}$ for any $n$ large enough. Hence  we may test \eqref{eq_multipole_gauged_e} with $(\varphi,\psi)$, thus obtaining 
\begin{equation*}
\int_{\Omega\setminus \Gamma_1}(\nabla v_{\e_n}\cdot\nabla \varphi+\nabla w_{\e_n}\cdot \nabla \psi)\, dx 
=\la_{\e_n} \int_{\Omega}( v_{\e_n}\varphi+w_{\e_n}\psi) \, dx,
\end{equation*}
for any sufficiently large $n \in \mathbb{N}$. Passing to the limit as $n \to \infty$, thanks to \eqref{limit_lae_la0} and the fact that $v_{\e_n} \rightharpoonup v$ and 
$w_{\e_n} \rightharpoonup w$ weakly in  $\mc{H}_1$, we conclude that 
\begin{equation*}
\int_{\Omega\setminus \Gamma_1}(\nabla v\cdot\nabla \varphi+\nabla w\cdot \nabla \psi)\, dx 
=\la_{0} \int_{\Omega}( v\varphi+w\psi) \, dx,
\end{equation*}
for every $ (\varphi,\psi) \in  \mc{\widetilde{H}}_{0,0}$. In view of 
the density of $\widetilde{\mc{H}}_{0,0}$ in $\widetilde{\mc{H}}_0$, see
\cite[Lemma 3.4]{FNOS_multipole}, the above identity is actually satisfied by  any $(\varphi,\psi) \in  \mc{\widetilde{H}}_{0}$. Hence  $(v,w)$ is an eigenfunction of \eqref{prob_eigenvalue_gauged} with $\e=0$ associated to the eigenvalue $\la_0$. Since 
the eigenspace of \eqref{prob_eigenvalue_gauged} with $\e=0$ associated to $\lambda_0$ is generated by 
$(v_0,w_0)$ and $(-w_0,v_0)$, there exist $a,b\in \R$ such that $v=av_0-bw_0$ and $w=aw_0+bv_0$. Passing to the limit in the second condition in \eqref{hp_ve_we} yields 
\begin{equation*}
  0=\int_{\Omega} (w v_0-v w_0) \,dx =\int_{\Omega} ((aw_0+bv_0) v_0-(av_0-bw_0) w_0) \,dx
  =b\int_\Omega (v_0^2+w_0^2)\,dx=b,
  \end{equation*}
so that $(v,w)=a(v_0,w_0)$. On the other hand,  since $\int_\Omega (v^2+w^2)\,dx=1$, either $(v,w)=(v_0,w_0)$ or $(v,w)=-(v_0,w_0)$. Since $\lim_{n\to \infty}  \int_{\Omega} (v_{\e_n} v_0+w_{\e_n} w_0) \,dx = 
\int_{\Omega} (v v_0+w w_0) \,dx \geq0$  in view of  \eqref{hp_ve_we}, it must  necessarily be  $(v,w)=(v_0,w_0)$.
Finally by \eqref{limit_lae_la0}
\begin{equation*}
\norm{(v_\e,w_\e)}^2_{\mc{H}_1\times \mc{H}_1}=\la_\e \to \la_0=\norm{(v_0,w_0)}^2_{\mc{H}_1\times \mc{H}_1}, \quad \text{ as } n \to \infty
\end{equation*}
thus $v_{\e_n} \to v$ and $w_{\e_n} \to w$ strongly in  $\mc{H}_1$ as $n \to \infty$. Since the limit is independent from the sequence $\{\e_n\}_{n \in \mathbb{N}}$, by the Urysonh Subsequence Principle, we conclude that  $v_{\e} \to v$ and $w_{\e} \to w$ strongly in  $\mc{H}_1$ as $\e \to 0^+$.
\end{proof}

We denote by $\Pi_\e$ the orthogonal projection onto the 
eigenspace of \eqref{prob_eigenvalue_gauged} associated to the eigenvalue $\lambda_\e$, which is the linear space spanned by $(v_\e,w_\e)$ and $(-w_\e,v_\e)$, i.e. 
\begin{align}\label{def_Pie}
\Pi_\e:\   L^2(\Omega)\times L^2(\Omega) &\to \widetilde{\mc{H}}_\e,\\ 
\notag(\varphi,\psi)&\mapsto 
\bigg(\int_\Omega(\varphi v_\e+\psi w_\e)\,dx\bigg)(v_\e,w_\e)
+\bigg(\int_\Omega(\psi v_\e-\varphi w_\e)\,dx\bigg)(-w_\e,v_\e).
\end{align}
Theorem  \ref{theorem_asymptotic_not_precise} is the first claim of the following result, which is obtained using an approach developed in \cite{AFHL}, see also \cite{FNOS_multipole}.

\begin{theorem}\label{theorem_asymptotic_with_eigenfunction}
Suppose that \eqref{hp_la0_simple} and \eqref{hp_cirulation}  hold. Then 
\begin{equation}\label{eq_asymptotic_eignevalues}
\la_\e-\la_0=2 (\mc{E}_\e+L_\e(v_0,w_0))+o\big(\norm{(V_\e,W_\e)}_{\mc{H}_\e\times \mc{H}_\e}^2\big) \quad \text{ as } \e \to 0^+,
\end{equation}
with $V_\e,W_\e$ being as in Proposition \ref{prop_potential}. Moreover, as $\e\to0^+$,
\begin{align}
&\norm{(v_0,w_0)-(V_\e,W_\e)-\Pi_\e(v_0-V_\e,w_0-W_\e)}_{\mc{H}_\e\times\mc{H}_\e}=o(\norm{(V_\e,W_\e)}_{\mc{H}_\e\times \mc{H}_\e}),
\label{eq_v0w0_VeWe_Pi}\\[5 pt]
&\norm{(v_0,w_0)-\Pi_\e(v_0-V_\e,w_0-W_\e)}_{L^2(\Omega)\times L^2(\Omega)}=o(\norm{(V_\e,W_\e)}_{\mc{H}_\e\times \mc{H}_\e}),
\label{eq_v0w0_Pi}\\[5 pt]
&\norm{\Pi_\e(v_0-V_\e,w_0-W_\e)}_{L^2(\Omega)\times L^2(\Omega)}^2=1+o(\norm{(V_\e,W_\e)}_{\mc{H}_\e\times \mc{H}_\e}).
\label{eq_Pi(v0w0_VeWe)}
\end{align}
\end{theorem}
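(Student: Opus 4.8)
The plan is to reduce the entire statement to the single element $U_\e:=(v_0-V_\e,w_0-W_\e)$, which lies in $\widetilde{\mc H}_\e$ since $(V_\e-v_0,W_\e-w_0)\in\widetilde{\mc H}_\e$ by \eqref{def_VeWe}. First I would prove the fundamental identity
\begin{equation*}
q_\e\big(U_\e,(\varphi,\psi)\big)=\la_0\,\big((v_0,w_0),(\varphi,\psi)\big)_{L^2(\Omega)\times L^2(\Omega)}\qquad\text{for all }(\varphi,\psi)\in\widetilde{\mc H}_\e,
\end{equation*}
with $q_\e$ as in \eqref{def_qe}. To obtain it, integrate $\int_{\Omega\setminus\Gamma_\e}(\nabla v_0\cdot\nabla\varphi+\nabla w_0\cdot\nabla\psi)\,dx$ by parts: since $-\Delta v_0=\la_0v_0$ and $-\Delta w_0=\la_0w_0$ on $\Omega\setminus\Gamma_0$, while $(v_0,w_0)$ is smooth across each segment $S_\e^j$ and $(\varphi,\psi)$ jumps there, the crack integrals on $\Gamma_0$ cancel by the conditions defining $\widetilde{\mc H}_\e$, and those on $S_\e^j$, after inserting $\gamma_-^j(\varphi)=b_j\gamma_+^j(\varphi)-d_j\gamma_+^j(\psi)$ and $\gamma_-^j(\psi)=d_j\gamma_+^j(\varphi)+b_j\gamma_+^j(\psi)$, collapse to exactly $L_\e(\varphi,\psi)$; subtracting \eqref{eq_potential} then cancels $L_\e$ and gives the identity. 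Testing it with $U_\e$ and using $\norm{(v_0,w_0)}_{L^2(\Omega)\times L^2(\Omega)}=1$ yields $\norm{U_\e}^2_{\mc H_\e\times\mc H_\e}=\la_0(1-D)$, where $D:=\big((v_0,w_0),(V_\e,W_\e)\big)_{L^2(\Omega)\times L^2(\Omega)}$; expanding the same norm directly and using $\int_{\Omega\setminus\Gamma_\e}(|\nabla v_0|^2+|\nabla w_0|^2)\,dx=\la_0$ produces the exact relation $\la_0D=2q_\e\big((V_\e,W_\e),(v_0,w_0)\big)-\norm{(V_\e,W_\e)}^2_{\mc H_\e\times\mc H_\e}$. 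On the other hand, testing \eqref{eq_potential} with $U_\e$ shows $\mc E_\e+L_\e(v_0,w_0)=q_\e\big((V_\e,W_\e),(v_0,w_0)\big)-\tfrac12\norm{(V_\e,W_\e)}^2_{\mc H_\e\times\mc H_\e}$, so that $\la_0D=2\big(\mc E_\e+L_\e(v_0,w_0)\big)$.

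Next I would establish \eqref{eq_v0w0_VeWe_Pi}. The fundamental identity means precisely $U_\e=\la_0\mc F_\e(v_0,w_0)$, where $\mc F_\e$ from \eqref{def_Fe} is extended to $L^2(\Omega)\times L^2(\Omega)$ through the same defining relation; hence $\mc F_\e U_\e-\tfrac1{\la_0}U_\e=-\mc F_\e(V_\e,W_\e)$, and by \eqref{def_Fe}, the Cauchy--Schwarz inequality and the compact embedding \eqref{eq:compact-embedding} one gets $\norm{\mc F_\e(V_\e,W_\e)}_{\mc H_\e\times\mc H_\e}\le C\norm{(V_\e,W_\e)}_{L^2(\Omega)\times L^2(\Omega)}$. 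Since $\la_0=\la_{0,n_0}$ is simple and $\la_{\e,n}\to\la_{0,n}$, for $\e$ small the number $1/\la_0$ is at distance at least some $g_0>0$ from every $\mu_{n,\e}$ with $n\ne n_0$, so $\mc F_\e-\tfrac1{\la_0}$ is invertible with norm $\le 1/g_0$ on the range of $I-\Pi_\e$. Applying $I-\Pi_\e$ to $\mc F_\e U_\e-\tfrac1{\la_0}U_\e$ and using that $\Pi_\e$ commutes with $\mc F_\e$ gives
\begin{equation*}
\norm{U_\e-\Pi_\e U_\e}_{\mc H_\e\times\mc H_\e}\le\tfrac{1}{g_0}\norm{\mc F_\e(V_\e,W_\e)}_{\mc H_\e\times\mc H_\e}\le\tfrac{C}{g_0}\norm{(V_\e,W_\e)}_{L^2(\Omega)\times L^2(\Omega)},
\end{equation*}
which is $o(\norm{(V_\e,W_\e)}_{\mc H_\e\times\mc H_\e})$ by Proposition \ref{prop_norm_L2_norm_nabla}; this is \eqref{eq_v0w0_VeWe_Pi}. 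Then \eqref{eq_v0w0_Pi} follows from $(v_0,w_0)-\Pi_\e U_\e=(V_\e,W_\e)+(U_\e-\Pi_\e U_\e)$ together with Proposition \ref{prop_norm_L2_norm_nabla}, and \eqref{eq_Pi(v0w0_VeWe)} is immediate from \eqref{eq_v0w0_Pi} and $\norm{(v_0,w_0)}_{L^2(\Omega)\times L^2(\Omega)}=1$.

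To conclude with \eqref{eq_asymptotic_eignevalues}, decompose $U_\e=\Pi_\e U_\e+(U_\e-\Pi_\e U_\e)$ $q_\e$-orthogonally; as $\Pi_\e U_\e$ lies in the $\la_\e$-eigenspace, $\norm{U_\e}^2_{\mc H_\e\times\mc H_\e}=\la_\e\norm{\Pi_\e U_\e}^2_{L^2(\Omega)\times L^2(\Omega)}+\norm{U_\e-\Pi_\e U_\e}^2_{\mc H_\e\times\mc H_\e}$. Comparing with $\norm{U_\e}^2_{\mc H_\e\times\mc H_\e}=\la_0(1-D)$ and inserting $\norm{\Pi_\e U_\e}^2_{L^2(\Omega)\times L^2(\Omega)}=\norm{U_\e}^2_{L^2(\Omega)\times L^2(\Omega)}-\norm{U_\e-\Pi_\e U_\e}^2_{L^2(\Omega)\times L^2(\Omega)}=1-2D+o(\norm{(V_\e,W_\e)}^2_{\mc H_\e\times\mc H_\e})$ (by \eqref{eq_v0w0_VeWe_Pi} and Proposition \ref{prop_norm_L2_norm_nabla}) gives $\la_\e-\la_0=(2\la_\e-\la_0)D+o(\norm{(V_\e,W_\e)}^2_{\mc H_\e\times\mc H_\e})$. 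The crude estimate of Proposition \ref{prop_spectral}(iii) with $\mu=1/\la_0$ and test function $U_\e$ yields $|\la_\e-\la_0|=o(\norm{(V_\e,W_\e)}_{\mc H_\e\times\mc H_\e})$, and since $|D|\le\norm{(V_\e,W_\e)}_{L^2(\Omega)\times L^2(\Omega)}=o(\norm{(V_\e,W_\e)}_{\mc H_\e\times\mc H_\e})$, one has $(2\la_\e-\la_0)D=\la_0D+o(\norm{(V_\e,W_\e)}^2_{\mc H_\e\times\mc H_\e})=2\big(\mc E_\e+L_\e(v_0,w_0)\big)+o(\norm{(V_\e,W_\e)}^2_{\mc H_\e\times\mc H_\e})$, which is \eqref{eq_asymptotic_eignevalues}.

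I expect the decisive difficulty to be the sharpness in \eqref{eq_v0w0_VeWe_Pi}: the residual $\mc F_\e U_\e-\tfrac1{\la_0}U_\e$ is only controlled in the weak $L^2(\Omega)\times L^2(\Omega)$ norm, so the argument rests entirely on promoting the gain $\norm{(V_\e,W_\e)}_{L^2(\Omega)\times L^2(\Omega)}=o(\norm{(V_\e,W_\e)}_{\mc H_\e\times\mc H_\e})$ of Proposition \ref{prop_norm_L2_norm_nabla}, through the uniform spectral gap $g_0$, into a bound in the stronger $\mc H_\e\times\mc H_\e$ norm; keeping $g_0$ bounded away from $0$ as $\e\to0^+$ (i.e.\ stability of the whole spectrum around $\la_0$) is what must be handled with care. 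A secondary technical point is the rigorous justification of the integration by parts behind the identity of the first step: near the collision point the normal derivatives of $v_0,w_0$ lie only in $L^p(S_\e^j)$ for $p<\min\{1/\rho,1/(1-\rho)\}$, so the boundary integrals on $S_\e^j$ must be controlled exactly as $L_\e$ was in Proposition \ref{prop_Le_continuity}.
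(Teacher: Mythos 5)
Your proposal is correct, and its skeleton coincides with the paper's proof of Theorem \ref{theorem_asymptotic_with_eigenfunction}: your ``fundamental identity'' for $U_\e=(v_0-V_\e,w_0-W_\e)$ is exactly \eqref{eq_Ye_Ze}, your identity $\la_0 D=2(\mc{E}_\e+L_\e(v_0,w_0))$ is the paper's key claim \eqref{claim_main_theorem} (you obtain it by testing \eqref{eq_potential} with $U_\e$ and expanding $\norm{U_\e}^2_{\mc{H}_\e\times\mc{H}_\e}$ in two ways, the paper by combining \eqref{proof:main_theorem_1} and \eqref{proof:main_theorem_2} --- the same algebra), and your estimate $|\la_\e-\la_0|=o(\norm{(V_\e,W_\e)}_{\mc{H}_\e\times\mc{H}_\e})$ is the paper's Step 1 verbatim, resting on the same observation $\mc{F}_\e U_\e-\mu_0U_\e=-\mc{F}_\e(V_\e,W_\e)$. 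Where you genuinely deviate is the endgame. For \eqref{eq_v0w0_VeWe_Pi} the paper introduces the auxiliary pair $(\gamma_\e,\Upsilon_\e)$ and applies Proposition \ref{prop_spectral}-(iii) to the restricted operator $\widetilde{\mc{F}}_\e$ at the point $\mu_\e$, while you invert $\mc{F}_\e-\mu_0$ on the range of $I-\Pi_\e$ through the self-adjoint resolvent bound; the two arguments rest on the same uniform spectral gap (asserted in the paper as $\mathop{\rm dist}(\mu_\e,\sigma(\widetilde{\mc{F}}_\e))^2\ge K$ with $K$ independent of $\e$, and justified, as you indicate, by \eqref{limit_lae_la0} applied to the two neighbouring eigenvalues plus monotonicity of the enumeration --- this is no harder than what the paper leaves implicit), but your version is marginally cleaner in that it works at $\mu_0$ and does not consume the Step-1 estimate, whereas the paper's \eqref{proof:main_theorem_8} does. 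For \eqref{eq_asymptotic_eignevalues} the paper normalizes the projection into $(\hat v_\e,\hat w_\e)$ as in \eqref{def_hat_ve_we}, tests \eqref{proof:main_theorem_0.5} against it, and runs Steps 3--4; you bypass all of this with the $q_\e$-Pythagoras identity $\norm{U_\e}^2_{\mc{H}_\e\times\mc{H}_\e}=\la_\e\norm{\Pi_\e U_\e}^2_{L^2(\Omega)\times L^2(\Omega)}+\norm{U_\e-\Pi_\e U_\e}^2_{\mc{H}_\e\times\mc{H}_\e}$ --- legitimate because $L^2$- and $q_\e$-orthogonality to the eigenspace coincide, as you implicitly use --- which yields $\la_\e-\la_0=(2\la_\e-\la_0)D+o(\norm{(V_\e,W_\e)}^2_{\mc{H}_\e\times\mc{H}_\e})$ and hence the expansion; this buys a shorter computation with exact identities at every stage. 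Two cosmetic corrections: the bound $\norm{\mc{F}_\e(V_\e,W_\e)}_{\mc{H}_\e\times\mc{H}_\e}\le C\norm{(V_\e,W_\e)}_{L^2(\Omega)\times L^2(\Omega)}$ needs the $\e$-uniform Poincar\'e inequality \eqref{prop:appendix3_2}, not the compact embedding \eqref{eq:compact-embedding} you cite; and the integration by parts behind \eqref{eq:v_0crack} works exactly as you sketch, the $\Gamma_0$-terms cancelling because the jump conditions encode a rotation by $2\pi\rho^j$ shared by $(v_0,w_0)$ and the test pair, with the required $L^p$ summability of $\nabla v_0,\nabla w_0$ on $S^j_\e$ supplied by Propositions \ref{prop_vw_asympotic_not_1/2} and \ref{prop_vw_asympotic_1/2}, as in Proposition \ref{prop_Le_continuity}.
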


\begin{proof}
Let $Y_\e:=v_0-V_\e$ and $Z_\e:=w_0-W_\e$. 
From the fact that $(v_0,w_0)\in \widetilde{\mc{H}}_0$ solves \eqref{prob_eigenvalue_gauged} with $\e=0$ and $\lambda=\lambda_0$,  in the sense of \eqref{eq_multipole_gauged_0}, it follows that 
\begin{equation}\label{eq:v_0crack}
\int_{\Omega \setminus \Gamma_\e} (\nabla v_0 \cdot \nabla \varphi +\nabla w_0 \cdot \nabla\psi)\, dx 
= \lambda_0 \int_{\Omega}  (v_0 \varphi+w_0 \psi) \,  dx+L_\e(\varphi,\psi) 
\quad \text{for all } (\varphi,\psi) \in \widetilde{\mc{H}}_\e.
\end{equation}
From \eqref{eq:v_0crack} and Proposition \ref{prop_potential} 
it follows that $(Y_\e,Z_\e)\in \widetilde{\mc{H}}_\e$ is a weak solution to the problem
\begin{equation*}
\begin{cases}
-\Delta Y_\e= \la_0  v_0,  &\text{in } \Omega \setminus \Gamma_\e,\\
-\Delta Z_\e= \la_0  w_0,  &\text{in } \Omega \setminus \Gamma_\e,\\
Y_\e=Z_\e=0, &\text{on } \partial \Omega,\\
R^j(Y_\e,Z_\e)=I^j(Y_\e,Z_\e)=0,&\text{on }\Gamma^j_\e \text{ for all } j=1,\dots,k,\\
R^j(\nabla Y_\e\cdot \nu^j,\nabla Z_\e\cdot \nu^j)=I^j(\nabla Y_\e\cdot \nu^j, \nabla Z_\e\cdot \nu^j)=0, &\text{on }\Gamma^j_\e \text{ for all } j=1,\dots,k,\\
\end{cases}
\end{equation*}
in the sense that, letting $q_\e$ be as in \eqref{def_qe},
\begin{equation}\label{eq_Ye_Ze}
q_\e\big((Y_\e,Z_\e),(\varphi,\psi)\big)=\la_0\int_\Omega(v_0\varphi+w_0\psi)\,dx \quad \text{ for all } (\varphi,\psi) \in \widetilde{\mc{H}}_\e,
\end{equation}
and, equivalently 
\begin{equation}\label{proof:main_theorem_0.5}
q_\e((Y_\e,Z_\e),(\varphi,\psi))-\la_0
\int_\Omega (Y_\e\varphi+Z_\e\psi)\,dx=\la_0
\int_\Omega (V_\e\varphi+W_\e\psi)\,dx
\end{equation}
for every $(\varphi,\psi) \in \widetilde{\mc{H}}_\e$.
Given $(v_\e,w_\e)$ as in \eqref{def_ve_we} and such that \eqref{hp_ve_we} holds, let $\Pi_\e$ be as in \eqref{def_Pie}. 
For $\e>0$ small, let also
\begin{equation}\label{def_hat_ve_we}
(\hat{v}_\e,\hat{w}_\e):=\frac{\Pi_{\e}(Y_\e,Z_\e)}{\norm{\Pi_{\e}(Y_\e,Z_\e)}_{L^2(\Omega)\times L^2(\Omega)}}.
\end{equation}
Since $(v_\e,w_\e)$ and $(-w_\e,v_\e)$ solve \eqref{eq_multipole_gauged_e} with $\lambda=\lambda_\e$, then $(\hat{v}_\e,\hat{w}_\e)$ satisfies \eqref{eq_multipole_gauged_e}.
Then, choosing $(\varphi,\psi)= (\hat{v}_\e,\hat{w}_\e)$ in \eqref{proof:main_theorem_0.5} and using  \eqref{eq_multipole_gauged_e} for $(\hat{v}_\e,\hat{w}_\e)$, we obtain
\begin{align}\label{eq:hat_ve_we}
(\la_\e-\la_0)
\int_\Omega(Y_\e \hat v_\e+Z_\e  \hat w_\e)\,dx
=&\la_0
\int_\Omega(V_\e v_0+W_\e w_0)\,dx\\
\notag &\quad +\la_0
\int_\Omega\big(V_\e (\hat  v_\e-v_0)+W_\e ( \hat w_\e-w_0)\big)\,dx.
\end{align}
We claim that 
\begin{equation}\label{claim_main_theorem}
\la_0\int_{\Omega}( V_\e v_0+W_\e w_0)\,dx =2\mc{E}_\e +2 L_\e(v_0,w_0).
\end{equation}
As a simple consequence of \eqref{claim_main_theorem}, Proposition \ref{prop_norm_L2_norm_nabla} and the Cauchy-Schwartz inequality 
we observe that 
\begin{equation}\label{estimate_no_L}
2\mc{E}_\e +2 L_\e(v_0,w_0)=o(\norm{(V_\e,W_\e)}_{\mc{H}_\e\times \mc{H}_\e}) \quad \text{ as } \e \to 0^+.
\end{equation}
To prove \eqref{claim_main_theorem}, we test \eqref{eq:v_0crack} with $(\varphi,\psi)=(V_\e-v_0,W_\e-w_0)\in \widetilde{\mc{H}}_\e$ and  obtain
\begin{equation}\label{proof:main_theorem_1}
\int_{\Omega \setminus \Gamma_\e} (\nabla v_0 \cdot \nabla V_\e+\nabla w_0\cdot\nabla W_\e) \, dx -\la_0 \int_{\Omega}(v_0 V_\e+w_0W_\e)\, dx
=L_\e(V_\e,W_\e)-L_\e(v_0,w_0).
\end{equation}
 Furthermore we may test \eqref{eq_Ye_Ze} with $(Y_\e,Z_\e)=(v_0-V_\e,w_0-W_\e)$, obtaining 
\begin{equation*}
\int_{\Omega\setminus \Gamma_\e} (|\nabla(V_\e-v_0)|^2 +|\nabla(W_\e-v_0)|^2) \, dx = \la_0 \int_{\Omega} \big( v_0(v_0-V_\e)+w_0(w_0-W_\e) \big) \, dx.
\end{equation*}
The above identity, combined with \eqref{eq_multipole_gauged_0} for $\lambda=\lambda_0$ and $(v,w)=(\varphi,\psi)=(v_0,w_0)\in \widetilde{\mc{H}}_0$, provides
\begin{align}\label{proof:main_theorem_2}
2\int_{\Omega \setminus \Gamma_\e}\ (\nabla v_0 \cdot \nabla V_\e+\nabla w_0\cdot\nabla W_\e) \, dx&= \int_{\Omega\setminus\Gamma_\e}(|\nabla V_\e|^2 +|\nabla W_\e|^2) \, dx\\
\notag&\quad +\la_0\int_{\Omega} (v_0V_\e+w_0W_\e)\, dx.
\end{align}
Claim \eqref{claim_main_theorem} follows from \eqref{def_Je}, \eqref{def_Ee}, \eqref{proof:main_theorem_1} and \eqref{proof:main_theorem_2}.
By assembling \eqref{eq:hat_ve_we} and  \eqref{claim_main_theorem} we obtain
\begin{equation}\label{proof:main_theorem_3}
(\la_\e-\la_0)\!
\int_\Omega (Y_\e  \hat v_\e\!+\!Z_\e  \hat w_\e)\,dx
=2\big(\mc{E}_\e +L_\e(v_0,w_0)\big)+\la_0
\int_\Omega\! \big(V_\e ( \hat v_\e-v_0)\!+\!W_\e ( \hat w_\e-w_0)\big)\,dx.  
\end{equation}
To complete the proof, we study the asymptotics, as $\e \to 0^+$, of each term of \eqref{proof:main_theorem_3}. We divide the remaining part of the proof into several steps.

\smallskip\noindent 
\textbf{Step 1.} We claim that 
\begin{equation}\label{claim_step_1}
|\la_\e-\la_0|=o(\norm{(V_\e,W_\e)}_{\mc{H}_\e \times\mc{H}_\e}) \quad  \text{ as } \e \to 0^+.
\end{equation}
Let $\mu_0:=\la_0^{-1}$ and $\mu_\e:=\la_\e^{-1}$. Since $\la_0$ is simple as an eigenvalue of \eqref{prob_Aharonov-Bohm_multipole} and $\la_\e \to \la_0$, see \eqref{limit_lae_la0}, if $\e$ is sufficiently small we have  $|\mu_\e-\mu_0|=\mathop{\rm{dist}}(\mu_0,\sigma(\mc{F}_\e))$, hence 
\begin{equation}\label{proof:main_theorem_4}
|\la_\e-\la_0|=\la_\e \la_0|\mu_\e-\mu_0| \le 2 \la_0^2 \mathop{\rm{dist}}(\mu_0,\sigma(\mc{F}_\e)) 
\le 2 \la_0^2  \left(\frac{q_\e(\mathcal{F}_\e(Y_\e,Z_\e)-\mu_0 (Y_\e,Z_\e))}{q_\e(Y_\e,Z_\e)}\right)^\frac12,
\end{equation}
in view of Proposition \ref{prop_spectral}. Furthermore, by \eqref{def_qe} and Proposition \ref{prop_VWe_to_0}
\begin{equation}\label{proof:main_theorem_5}
q_\e(Y_\e,Z_\e)= \la_0 +\int_{\Omega \setminus \Gamma_\e} (|\nabla V_\e|^2 +|\nabla W_\e|^2) \, dx
-2 \int_{\Omega \setminus \Gamma_\e} (\nabla V_\e \cdot \nabla v_0 +\nabla W_\e\cdot \nabla w_0) \, dx=\la_0 +o(1)
\end{equation}
as $\e \to 0^+$, since $\norm{(v_0,w_0)}_{L^2(\Omega) \times L^2(\Omega)}=1$. 
By using first \eqref{def_Fe} and then testing \eqref{eq_Ye_Ze} with $\mathcal{F}_\e(Y_\e,Z_\e)-\mu_0 (Y_\e,Z_\e)$, we obtain
\begin{multline*}
q_\e(\mathcal{F}_\e(Y_\e,Z_\e)-\mu_0 (Y_\e,Z_\e))=-\Big((V_\e,W_\e),\mathcal{F}_\e(Y_\e,Z_\e)-\mu_0 (Y_\e,Z_\e)\Big)_{L^2(\Omega) \times L^2(\Omega)}\\
+\Big((v_0,w_0),\mathcal{F}_\e(Y_\e,Z_\e)-\mu_0 (Y_\e,Z_\e)\Big)_{L^2(\Omega) \times L^2(\Omega)}-\mu_0 q_\e\big((Y_\e,Z_\e),\mathcal{F}_\e(Y_\e,Z_\e)-\mu_0 (Y_\e,Z_\e)\big)\\
=-\Big((V_\e,W_\e),\mathcal{F}_\e(Y_\e,Z_\e)-\mu_0 (Y_\e,Z_\e)\Big)_{L^2(\Omega) \times L^2(\Omega)}.
\end{multline*}
Hence, by \eqref{prop:appendix3_2},  Proposition \ref{prop_norm_L2_norm_nabla} and the Cauchy-Schwarz  inequality, we conclude that 
\begin{equation}\label{proof:main_theorem_6}
\big(q_\e(\mathcal{F}_\e(Y_\e,Z_\e)-\mu_0 (Y_\e,Z_\e))\big)^{1/2}=o(\norm{(V_\e,W_\e)}_{\mc{H}_\e \times \mc{H}_\e}) \quad \text{ as } \e \to 0^+.
\end{equation}
Then \eqref{claim_step_1} follows from \eqref{proof:main_theorem_4}, \eqref{proof:main_theorem_5} and \eqref{proof:main_theorem_6}.

\smallskip\noindent 
\textbf{Step 2.} We claim that 
\begin{equation}\label{claim_step_2}
q_\e((Y_\e,Z_\e)-\Pi_\e(Y_\e,Z_\e ))=o(\norm{(V_\e,W_\e)}_{\mc{H}_\e\times\mc{H}_\e}^2) \quad \text{ as } \e \to 0^+.
\end{equation}
Let 
\begin{equation}\label{def_chi_xi_Upsilon_Kappa}
(\chi_\e,\kappa_\e):=(Y_\e,Z_\e)-\Pi_\e(Y_\e,Z_\e),\quad \text{ and } \quad (\gamma_\e,\Upsilon_\e):= \mc{F}_\e(\chi_\e,\kappa_\e)-\mu_\e(\chi_\e,\kappa_\e).
\end{equation}
We have
\begin{align*}
(\chi_\e,\kappa_\e) \in N_\e:&=\Big\{(\varphi,\psi) \in \widetilde{\mc{H}}_\e: \Pi_\e(\varphi,\psi)=0\Big\}\\
&=\bigg\{(\varphi,\psi) \in \widetilde{\mc{H}}_\e: 
\int_\Omega (\varphi v_\e+\psi w_\e)\,dx=\int_\Omega (\psi v_\e-\varphi w_\e)\,dx=0\bigg\}.
\end{align*}
Since both $(v_\e,w_\e)$ and $(-w_\e,v_\e)$ solve  \eqref{prob_eigenvalue_gauged} in the weak sense \eqref{eq_multipole_gauged_e}, from \eqref{def_Fe} we deduce that $\mc{F}_\e(\varphi,\psi)\in N_\e$ for every $(\varphi,\psi) \in N_\e$. Hence the operator 
\begin{equation*}
\widetilde{\mc{F}}_\e:= \mc{F}_\e\Big|_{N_\e}:N_\e\to N_\e
\end{equation*}
is well-defined. It is easy to verify that $\widetilde{\mc{F}}_\e$ satisfies conditions (i)--(iii) of Proposition \ref{prop_spectral}.
Furthermore $\sigma(\widetilde{\mc{F}}_\e)=\sigma(\mc{F}_\e)\setminus \{\mu_\e\}$, so that there exists a constant $K>0$, which is independent of $\e$, such that 
$\big(\mathop{\rm{dist}}(\mu_\e,\sigma(\widetilde{\mc{F}}_\e)\big)\big)^2\ge K$. Then 
\begin{align}\label{proof:main_theorem_7}
q_\e((Y_\e,Z_\e)-\Pi_\e(Y_\e,Z_\e ))&=q_\e((\chi_\e,\kappa_\e))
\le \frac{1}{K}\big(\mathop{\rm{dist}}(\mu_\e,\sigma(\widetilde{\mc{F}}_\e)\big)^2q_\e((\chi_\e,\kappa_\e))\\
&\notag
\le \frac{1}{K}q_\e(\mc{F}_\e(\chi_\e,\kappa_\e)-\mu_\e(\chi_\e,\kappa_\e))=\frac{1}{K}q_\e(\gamma_\e,\Upsilon_\e),
\end{align}
thanks to Proposition \ref{prop_spectral}--(iii) and \eqref{def_chi_xi_Upsilon_Kappa}. In order to estimate $q_\e(\gamma_\e,\Upsilon_\e)$, we test by $ (\gamma_\e,\Upsilon_\e)$ both \eqref{proof:main_theorem_0.5} and \eqref{eq_multipole_gauged_e} satisfied by $\Pi_\e(Y_\e,Z_\e)$, thus obtaining
\begin{multline*}
q_\e((\chi_\e,\kappa_\e),(\gamma_\e,\Upsilon_\e))-\la_\e
\int_\Omega 
(\chi_\e\gamma_\e+\kappa_\e\Upsilon_\e)\,dx\\
=\la_0
\int_\Omega 
(V_\e\gamma_\e+W_\e\Upsilon_\e)\,dx+(\la_0-\la_\e)
\int_\Omega (Y_\e\gamma_\e+Z_\e\Upsilon_\e)\,dx.
\end{multline*}
Then from \eqref{def_Fe} we deduce that
\begin{align*}
q_\e(\gamma_\e,\Upsilon_\e)& =q_\e(\mc{F}_\e(\chi_\e,\kappa_\e),(\gamma_\e,\Upsilon_\e))-\mu_\e q_\e((\chi_\e,\kappa_\e),(\gamma_\e,\Upsilon_\e))\\
&=-\mu_\e\Big(q_\e\big((\chi_\e,\kappa_\e),(\gamma_\e,\Upsilon_\e)\big)-\la_\e q_\e\big(\mc{F}_\e(\chi_\e,\kappa_\e),(\gamma_\e,\Upsilon_\e)\big)\Big)\\
&=-\frac{\la_0}{\la_\e}
\int_\Omega (V_\e\gamma_\e+W_\e\Upsilon_\e)\,dx
-\frac{\la_0-\la_\e}{\la_\e}
\int_\Omega (Y_\e\gamma_\e+Z_\e\Upsilon_\e)\,dx.
\end{align*}
From the Cauchy-Schwarz inequality,  \eqref{prop:appendix3_2}, and  \eqref{limit_lae_la0} it follows that 
\begin{equation}\label{proof:main_theorem_8}
(q_\e(\gamma_\e,\Upsilon_\e))^{\frac{1}{2}}\le C \left(
\norm{(V_\e,W_\e)}_{L^2(\Omega)\times L^2(\Omega)}+|\la_\e-\la_0|\norm{(Y_\e,Z_\e)}_{L^2(\Omega)\times L^2(\Omega)}\right)
\end{equation}
for some constant $C>0$ which does not depend on $\e$. Furthermore, testing \eqref{proof:main_theorem_0.5} with $(Y_\e,Z_\e)$ we obtain, as $\e\to0^+$,
\begin{equation}\label{proof:main_theorem_9}
\int_\Omega(Y_\e^2+Z_\e^2)\,dx-1=-
\int_\Omega (V_\e Y_\e+W_\e Z_\e)\,dx+o(1)=o(1),
\end{equation}
in view of  Proposition \ref{prop_VWe_to_0},  \eqref{proof:main_theorem_5}, and \eqref{prop:appendix3_2}.
Hence we can deduce \eqref{claim_step_2} from \eqref{proof:main_theorem_7}, \eqref{proof:main_theorem_8}, \eqref{proof:main_theorem_9}, \eqref{claim_step_1}, and Proposition \ref{prop_norm_L2_norm_nabla}. We observe that  \eqref{eq_v0w0_VeWe_Pi} is also  proved.

\smallskip\noindent 
\textbf{Step 3.} We claim that 
\begin{equation}\label{claim_step_3}
\norm{(v_0-\hat v_\e,w_0-\hat w_\e)}_{L^2(\Omega)\times L^2(\Omega)}=o(\norm{(V_\e,W_\e)}_{\mc{H}_\e \times \mc{H}_\e}) \quad  \text{ as } \e \to 0^+.
\end{equation}
In view of \eqref{def_hat_ve_we}
\begin{align}\label{proof:main_theorem_10}
(v_0-\hat v_\e,w_0-\hat w_\e)
&=(v_0,w_0)-\frac{\Pi_\e(Y_\e,Z_\e)}{\norm{\Pi_\e(Y_\e,Z_\e)}_{L^2(\Omega)\times L^2(\Omega)}}\\
&=\frac{
(\norm{\Pi_\e(Y_\e,Z_\e)}_{L^2(\Omega)\times L^2(\Omega)}-1)(v_0,w_0)  
+(v_0,w_0)-\Pi_\e(Y_\e,Z_\e)}{\|\Pi_\e(Y_\e,Z_\e)\|_{L^2(\Omega)\times L^2(\Omega)}}.
\end{align}
Furthermore, by definition of $Y_\e,Z_\e$,  Proposition \ref{prop_norm_L2_norm_nabla}, \eqref{prop:appendix3_2}, and \eqref{claim_step_2},
\begin{multline}\label{proof:main_theorem_11}
 \norm{(v_0,w_0)-\Pi_\e(Y_\e,Z_\e)}_{L^2(\Omega)\times L^2(\Omega)} \le \norm{(v_0,w_0)-(Y_\e,Z_\e)}_{L^2(\Omega)\times L^2(\Omega)}\\
 +\norm{(Y_\e,Z_\e)-\Pi_\e(Y_\e,Z_\e)}_{L^2(\Omega)\times L^2(\Omega)}=o(\norm{(V_\e,W_\e)}_{\mc{H}_\e\times \mc{H}_\e}) \quad \text{ as } \e \to 0^+.
\end{multline}
Hence we have proved \eqref{eq_v0w0_Pi}. Finally, since $\norm{(v_0,w_0)}_{L^2(\Omega)\times L^2(\Omega)}=1$, from \eqref{proof:main_theorem_11} and
the Cauchy-Schwarz inequality we deduce that 
\begin{align}
    \label{proof:main_theorem_12}
 \quad \norm{\Pi_\e(Y_\e,Z_\e)}^2_{L^2(\Omega)\times L^2(\Omega)}& =\norm{(v_0,w_0)-\Pi_\e(Y_\e,Z_\e)}^2_{L^2(\Omega)\times L^2(\Omega)}+\|(v_0,w_0)\|^{2}_{L^2(\Omega)\times L^2(\Omega)}\\
 &\notag\qquad\quad-2\big((v_0,w_0)-\Pi_\e(Y_\e,Z_\e),(v_0,w_0)\big)_{L^2(\Omega)\times L^2(\Omega)}\\
 &\notag =1+o(\norm{(V_\e,W_\e)}_{\mc{H}_\e \times \mc{H}_\e}) \quad \text{as } \e \to 0^+,
\end{align}
thus proving \eqref{eq_Pi(v0w0_VeWe)}. Finally \eqref{claim_step_3} follows from \eqref{proof:main_theorem_10}, \eqref{proof:main_theorem_11} and \eqref{proof:main_theorem_12}.

\smallskip\noindent 
\textbf{Step 4.} We claim that 
\begin{equation}\label{claim_step_4}
\int_\Omega(Y_\e \hat v_\e+Z_\e \hat w_\e)\,dx = 1 +o(\norm{(V_\e,W_\e)}_{\mc{H}_\e \times \mc{H}_\e}) \quad  \text{ as } \e \to 0^+.
\end{equation}
Indeed, by \eqref{def_hat_ve_we} we have
\begin{equation*}
\int_\Omega\!(Y_\e \hat v_\e+Z_\e \hat w_\e)dx
\!=\!\frac{\big((Y_\e,Z_\e)\!-\!\Pi_\e(Y_\e,Z_\e),\Pi_\e(Y_\e,Z_\e)\big)_{L^2(\Omega)\times L^2(\Omega)}\!\!+\norm{\Pi_\e(Y_\e,Z_\e)}^2_{L^2(\Omega)\times L^2(\Omega)}}{\norm{\Pi_\e(Y_\e,Z_\e)}_{L^2(\Omega)\times L^2(\Omega)}}.
\end{equation*}
Hence \eqref{claim_step_4} follows from \eqref{claim_step_2}, \eqref{prop:appendix3_2}, and \eqref{proof:main_theorem_12}.

\medskip
Combining  \eqref{estimate_no_L}, \eqref{proof:main_theorem_3}, Proposition \ref{prop_norm_L2_norm_nabla}, \eqref{claim_step_3}  and \eqref{claim_step_4} we obtain 
\begin{multline*}
\la_\e-\la_0=\left(1+o(\norm{(V_\e,W_\e)}_{\mc{H}_\e \times \mc{H}_\e})\right)\left(2\mc{E}_\e +2L_\e(v_0,w_0)+o(\norm{(V_\e,W_\e)}_{\mc{H}_\e \times \mc{H}_\e}^2)\right)\\
=2\mc{E}_\e +2L_\e(v_0,w_0)+ o(\norm{(V_\e,W_\e)}_{\mc{H}_\e \times \mc{H}_\e}^2)\quad  \text{ as } \e \to 0^+.
\end{multline*}
Estimate \eqref{eq_asymptotic_eignevalues} is therefore proven.
\end{proof}

\section{Blow-up Analysis}\label{sec:blowup}
In this section, we perform a blow-up analysis, from which it is possible to extract information on the asymptotic behavior of
$\mc{E}_\e$ and $(V_\e,W_\e)$ as $\e \to 0^+$. 
Assumption  \eqref{hp_cirulation} allows obtaining a Hardy-type inequality, necessary to characterize the concrete functional space where  the limit profile is found.

\subsection{A Hardy type inequality}\label{subsection_hardy_ineq}
Let $\widetilde{\mc{X}}$ and $\widetilde{\mc{H}}$ be as in \eqref{def_tilde_X} and \eqref{def_tilde_H}, respectively. A Hardy-type inequality in  $\widetilde{\mc{X}}$ can be deduced from the following inequality on annuli $D_{2r}\setminus D_r$, for couples of functions in the space
\begin{equation*}
\widetilde{\mc{X}}_r:=\{(\varphi,\psi): \varphi,\psi\in H^1((D_{2r}\setminus D_r) \setminus \Gamma_0),
R^j(\varphi,\psi)=I^j(\varphi,\psi)=0 \text{ on } \Gamma^j_0 \text{ for all } j=1,\dots k\}.
\end{equation*}

\begin{proposition}\label{prop_hardy_annuli}
Under assumption \eqref{hp_cirulation},  for every $r>0$ and $(\varphi,\psi) \in \widetilde{\mc{X}}_r$,
\begin{equation}\label{ineq_hardy_annuli}
\int_{D_{2r}\setminus D_r} \frac{\varphi^2+\psi^2}{|x|^2} \, dx 
\le \max\left\{\frac1{\rho^2},\frac1{(1-\rho)^2}\right\} \int_{(D_{2r}\setminus D_r)\setminus \Gamma_0} (|\nabla \varphi|^2 +|\nabla \psi|^2) \, dx.
\end{equation}
\end{proposition}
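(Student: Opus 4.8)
The plan is to complexify the pair $(\varphi,\psi)$ and use the gauge phase $\Theta_0$ to absorb the transmission conditions, thereby reducing \eqref{ineq_hardy_annuli} to a magnetic Hardy inequality on the annulus which can be established by separation of variables in the angular variable. First I would set $\zeta:=\varphi+i\psi$ and note that, by \eqref{def_bj_dj}, \eqref{def_R} and \eqref{def_I}, the conditions $R^j(\varphi,\psi)=I^j(\varphi,\psi)=0$ on $\Gamma_0^j$ are equivalent to the single complex transmission relation
\begin{equation*}
\gamma^j_-(\zeta)=(b_j+id_j)\,\gamma^j_+(\zeta)=e^{2\pi i\rho^j}\gamma^j_+(\zeta)\quad\text{on }\Gamma_0^j,
\end{equation*}
since $b_j+id_j=\cos(2\pi\rho^j)+i\sin(2\pi\rho^j)=e^{2\pi i\rho^j}$. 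I would then define $u:=e^{i\Theta_0}\zeta$ on $(D_{2r}\setminus D_r)\setminus\Gamma_0$. The jump of $e^{i\Theta_0}$ across each crack $\Gamma_0^j$ is exactly the factor compensating the above jump of $\zeta$, so that the traces of $u$ from the two sides $\pi_+^j$ and $\pi_-^j$ coincide; consequently $u$ extends to a single-valued function of class $H^1(D_{2r}\setminus D_r,\C)$. Moreover $|u|^2=\varphi^2+\psi^2$ and, by the gauge identity \eqref{eq:gradiente-gauge} (applied with $v=\varphi$, $w=\psi$), $|(i\nabla+A_0^\rho)u|^2=|\nabla\varphi|^2+|\nabla\psi|^2$ on $(D_{2r}\setminus D_r)\setminus\Gamma_0$. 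Hence \eqref{ineq_hardy_annuli} is equivalent to the magnetic Hardy inequality
\begin{equation*}
\int_{D_{2r}\setminus D_r}\frac{|u|^2}{|x|^2}\,dx\le\max\left\{\frac{1}{\rho^2},\frac{1}{(1-\rho)^2}\right\}\int_{D_{2r}\setminus D_r}|(i\nabla+A_0^\rho)u|^2\,dx.
\end{equation*}

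To prove the latter I would pass to polar coordinates $x=\sigma(\cos t,\sin t)$, with $\sigma\in(r,2r)$ and $t\in[0,2\pi)$, and expand the single-valued function $u$ in a Fourier series $u(\sigma,t)=\sum_{n\in\Z}u_n(\sigma)e^{int}$. Since $A_0^\rho$ has vanishing radial component and angular component $\rho/\sigma$, one has $|(i\nabla+A_0^\rho)u|^2=|\partial_\sigma u|^2+\sigma^{-2}|(i\partial_t+\rho)u|^2$, with $(i\partial_t+\rho)u=\sum_{n\in\Z}(\rho-n)u_n e^{int}$. Integrating in $t$, using Parseval's identity and $dx=\sigma\,d\sigma\,dt$, and then discarding the nonnegative radial contribution, I obtain
\begin{align*}
\int_{D_{2r}\setminus D_r}|(i\nabla+A_0^\rho)u|^2\,dx
&=2\pi\sum_{n\in\Z}\int_r^{2r}\left(|u_n'(\sigma)|^2+\frac{(n-\rho)^2}{\sigma^2}|u_n(\sigma)|^2\right)\sigma\,d\sigma\\
&\ge2\pi\,\Big(\min_{n\in\Z}(n-\rho)^2\Big)\sum_{n\in\Z}\int_r^{2r}\frac{|u_n(\sigma)|^2}{\sigma}\,d\sigma\\
&=\Big(\min_{n\in\Z}(n-\rho)^2\Big)\int_{D_{2r}\setminus D_r}\frac{|u|^2}{|x|^2}\,dx.
\end{align*}
Assumption \eqref{hp_cirulation}, i.e. $\rho\in(0,1)$, gives $\min_{n\in\Z}(n-\rho)^2=(\min\{\rho,1-\rho\})^2=\big(\max\{\rho^{-2},(1-\rho)^{-2}\}\big)^{-1}$, which produces precisely the constant in \eqref{ineq_hardy_annuli} and closes the argument.

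The main obstacle I anticipate is the first step: rigorously verifying that $u=e^{i\Theta_0}\zeta$ has no jump across each crack $\Gamma_0^j$ and hence defines a single-valued function in $H^1(D_{2r}\setminus D_r,\C)$. This is exactly where the transmission operators $R^j,I^j$, built with $b_j=\cos(2\pi\rho^j)$ and $d_j=\sin(2\pi\rho^j)$, are designed to match the jump of the gauge phase $e^{i\Theta_0}$ prescribed by \eqref{eq:proprieta_Theta-0}: the factor $e^{2\pi i\rho^j}$ relating $\gamma^j_-(\zeta)$ to $\gamma^j_+(\zeta)$ is cancelled crack-by-crack by the corresponding jump of $e^{i\Theta_0}$ across $\Gamma_0^j$. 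Once this gluing is established — it is of the same nature as the equivalence recorded in Section \ref{subsec_equivalent_Aharonov_Bohm} — the remainder is a routine separation of variables. I would also note that on $D_{2r}\setminus D_r$ the potential $A_0^\rho$ is smooth and bounded, so no singularity at the origin interferes and the Fourier computation (including the interchange of sum and integral, justified by Parseval) is fully legitimate.
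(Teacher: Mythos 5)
Your proof is correct, and its first half coincides with the paper's own argument: the paper likewise sets $u:=e^{i\Theta_0}(\varphi+i\psi)$, observes that the transmission conditions defining $\widetilde{\mc{X}}_r$ make $u$ a single-valued function in $H^1$ of the annulus (after a preliminary rescaling to $r=1$), and uses the gauge identity \eqref{eq:gradiente-gauge} to convert \eqref{ineq_hardy_annuli} into the magnetic Hardy inequality for $(i\nabla+A_0^\rho)$ on the annulus. The only divergence is in the second half: where you prove that magnetic inequality by hand — expanding $u$ in an angular Fourier series, using $|(i\nabla+A_0^\rho)u|^2=|\partial_\sigma u|^2+\sigma^{-2}|(i\partial_t+\rho)u|^2$, Parseval, and $\min_{n\in\Z}(n-\rho)^2=(\min\{\rho,1-\rho\})^2$ — the paper simply cites \cite[Remark 3.2]{FFT}. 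Your separation-of-variables computation is precisely the standard proof behind that citation, so what your route buys is a self-contained argument (and one that works at any $r$ directly, the inequality being scale-invariant, so the paper's reduction to $r=1$ becomes unnecessary); what the citation buys is brevity. Your identification of the gluing step as the delicate point is also accurate and correctly resolved: from $R^j+iI^j=\gamma^j_-(\zeta)-(b_j+id_j)\gamma^j_+(\zeta)$ one gets $\gamma^j_-(\zeta)=e^{2\pi i\rho^j}\gamma^j_+(\zeta)$, and this factor is cancelled by the jump of $e^{i\Theta_0}$ across the half-line $t=\alpha^j+\pi$ visible in \eqref{eq:theta0j}; matching $H^1$ traces across each segment $\Gamma_0^j\cap(D_{2r}\setminus D_r)$ then yields $u\in H^1(D_{2r}\setminus D_r,\C)$, which is the same step the paper leaves implicit by appeal to the construction in Section \ref{subsec_equivalent_Aharonov_Bohm}.
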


\begin{proof}
By a scaling argument, it is enough to prove \eqref{ineq_hardy_annuli} for $r=1$. If $(\varphi,\psi)\in \widetilde{\mc{X}}_1$, then 
\begin{equation*}
u:=e^{i\Theta_0}(\varphi+i\psi)\in H^1(D_2\setminus D_1,\C).
\end{equation*}
Hence, from \cite[Remark 3.2]{FFT} it follows that 
\begin{equation*}
    \int_{D_{2}\setminus D_1} \frac{|u(x)|^2}{|x|^2} \, dx 
\le \max\left\{\frac1{\rho^2},\frac1{(1-\rho)^2}\right\} \int_{D_{2}\setminus D_1} 
|(i\nabla +A^\rho_0)u|^2\,dx,
\end{equation*}
which can be rewritten as
\begin{equation*}
\int_{D_{2}\setminus D_1} \frac{\varphi^2+\psi^2}{|x|^2} \, dx 
\le \max\left\{\frac1{\rho^2},\frac1{(1-\rho)^2}\right\} \int_{(D_{2}\setminus D_1)\setminus \Gamma_0} (|\nabla \varphi|^2 +|\nabla \psi|^2) \, dx.
\end{equation*}
in view of \eqref{eq:gradiente-gauge}, thus proving \eqref{ineq_hardy_annuli} for $r=1$.
\end{proof}

Since the constant in inequality \eqref{ineq_hardy_annuli} does not depend on $r$, we may sum over annuli to fill $\R^2 \setminus D_1$ and obtain the following result.
\begin{proposition}\label{prop_ineq_hardy_R2}
Under assumption \eqref{hp_cirulation}, for every $(\varphi,\psi)\in \widetilde{\mc{X}}$ we have
\begin{equation}\label{ineq_hardy_R2}
\int_{\R^2 \setminus D_1} \frac{\varphi^2+\psi^2}{|x|^2}\, dx 
\le \max\left\{\frac1{\rho^2},\frac1{(1-\rho)^2}\right\} \int_{(\R^2 \setminus D_1)\setminus \Gamma_1} (|\nabla \varphi|^2 +|\nabla \psi|^2) \, dx.
\end{equation}
Moreover, there exists a constant $C_\rho>0$ (depending only on $\rho$) such that, for every $(\varphi,\psi)\in \widetilde{\mc{X}}$,
\begin{equation}\label{ineq_poincarre_D1}
\int_{D_1} \left(\varphi^2 +\psi^2\right) \, dx 
\le C_\rho  \int_{\R^2 \setminus \Gamma_1} (|\nabla \varphi|^2 +|\nabla \psi|^2) \, dx.
\end{equation}
\end{proposition}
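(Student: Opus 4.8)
The plan is to obtain \eqref{ineq_hardy_R2} by a dyadic summation of the annular estimate \eqref{ineq_hardy_annuli}, and then to deduce the Poincaré inequality \eqref{ineq_poincarre_D1} by a compactness–contradiction argument anchored on the first annulus $D_2\setminus D_1$.

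For \eqref{ineq_hardy_R2}, the first thing I would record is that on the exterior region $\{|x|>1\}$ the two crack systems coincide, $\Gamma_1=\Gamma_0$: since $|a^j|=r_j<1$, any point $ta^j\in\Gamma_1^j$ with $|ta^j|>1$ must have $t<-1/r_j<0$, hence lies in $\Gamma_0^j$. Consequently, for every $n\ge 0$ the restriction of $(\varphi,\psi)\in\widetilde{\mc{X}}$ to the dyadic annulus $D_{2^{n+1}}\setminus D_{2^n}$ belongs to $\widetilde{\mc{X}}_{2^n}$, so Proposition \ref{prop_hardy_annuli} applies with $r=2^n$. Summing these inequalities over $n\ge 0$, the left-hand sides would add up to $\int_{\R^2\setminus D_1}\frac{\varphi^2+\psi^2}{|x|^2}\,dx$ and the right-hand sides to $\max\{\rho^{-2},(1-\rho)^{-2}\}\int_{(\R^2\setminus D_1)\setminus\Gamma_0}(|\nabla\varphi|^2+|\nabla\psi|^2)\,dx$, and I would replace $(\R^2\setminus D_1)\setminus\Gamma_0$ by $(\R^2\setminus D_1)\setminus\Gamma_1$ using the coincidence of the cracks, obtaining \eqref{ineq_hardy_R2}.

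For \eqref{ineq_poincarre_D1} I would argue by contradiction, assuming the existence of $(\varphi_n,\psi_n)\in\widetilde{\mc{X}}$ with $\int_{D_1}(\varphi_n^2+\psi_n^2)\,dx=1$ and $\int_{\R^2\setminus\Gamma_1}(|\nabla\varphi_n|^2+|\nabla\psi_n|^2)\,dx\to 0$. Applying Proposition \ref{prop_hardy_annuli} with $r=1$ and using $|x|\le 2$ on $D_2\setminus D_1$, the vanishing of the Dirichlet energy forces $\int_{D_2\setminus D_1}(\varphi_n^2+\psi_n^2)\,dx\to 0$. Hence $\{(\varphi_n,\psi_n)\}$ is bounded in $H^1(D_2\setminus\Gamma_1)$, and by compactness of the embedding $H^1(D_2\setminus\Gamma_1)\hookrightarrow L^2(D_2)$ (as in \cite[Remark 3.1]{FNOS_multipole}) I may extract a subsequence converging weakly in $H^1(D_2\setminus\Gamma_1)$ and strongly in $L^2(D_2)$ to some $(\varphi,\psi)$. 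Weak lower semicontinuity gives $\nabla\varphi=\nabla\psi=0$ on $D_2\setminus\Gamma_1$, while the strong $L^2$ convergence gives $\int_{D_1}(\varphi^2+\psi^2)\,dx=1$ and $\varphi=\psi=0$ a.e. on $D_2\setminus D_1$.

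The crucial and most delicate step is then to show that this limit vanishes, contradicting $\int_{D_1}(\varphi^2+\psi^2)\,dx=1$. Here I would avoid any discussion of the (configuration-dependent) connectivity of the slit domain $D_2\setminus\Gamma_1$ and instead foliate $D_2$ by radial segments. For every angle $t\notin\{\alpha^j,\alpha^j+\pi\}_{j=1}^k$, the open segment $\{\sigma(\cos t,\sin t):0<\sigma<2\}$ meets neither the origin nor $\Gamma_1$, hence it is contained in a single connected component $U$ of $D_2\setminus\Gamma_1$; on $U$ the function $\varphi$ is a.e. equal to a constant because $\nabla\varphi=0$, and since this segment (and thus $U$) contains points of $D_2\setminus D_1$ where $\varphi=0$ a.e., that constant must be zero. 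As the admissible angles have full measure, Fubini's theorem yields $\varphi=\psi=0$ a.e. in $D_2$, the desired contradiction. The main obstacle is precisely this final point: the jump structure permits locally constant functions that are genuinely discontinuous across the cracks, so one cannot simply conclude that the limit is globally constant on $D_2\setminus\Gamma_1$; the radial–foliation argument circumvents this by transporting the vanishing from the annulus inward along crack-free directions.
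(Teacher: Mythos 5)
Your proof of the exterior Hardy inequality \eqref{ineq_hardy_R2} is essentially identical to the paper's: a dyadic summation of the annular estimate \eqref{ineq_hardy_annuli} over $D_{2^{h+1}}\setminus D_{2^h}$, $h\ge 0$. Your explicit remark that $\Gamma_1$ and $\Gamma_0$ coincide outside $D_1$ (since $r_j<R<1$) is left implicit in the paper, and it is a welcome clarification. For the Poincar\'e inequality \eqref{ineq_poincarre_D1}, however, you take a genuinely different route. The paper argues directly and quantitatively: it integrates the identity $\dive((\varphi^2+\psi^2)x)=2(\varphi\nabla\varphi+\psi\nabla\psi)\cdot x+2(\varphi^2+\psi^2)$ over the pieces of $D_1$ cut along the lines $\Sigma^j$; since $x\cdot\nu^j=0$ on $\Sigma^j$, the boundary terms along the cracks vanish \emph{regardless of the jumps}, giving $\int_{D_1}(\varphi^2+\psi^2)\,dx\le\int_{\partial D_1}(\varphi^2+\psi^2)\,dS+\int_{D_1\setminus\Gamma_1}(|\nabla\varphi|^2+|\nabla\psi|^2)\,dx$, and the circle term is then absorbed using a trace inequality on $(D_2\setminus D_1)\setminus\Gamma_1$ together with \eqref{ineq_hardy_R2}. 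Your compactness--contradiction argument is correct as an existence proof for the fixed pole configuration: the application of \eqref{ineq_hardy_annuli} with $r=1$ (legitimate, since $\Gamma_1=\Gamma_0$ on the annulus) kills the $L^2$ mass on $D_2\setminus D_1$, the compact embedding $H^1(D_2\setminus\Gamma_1)\hookrightarrow L^2(D_2)$ holds because $D_2\setminus\Gamma_1$ splits into finitely many Lipschitz sectors, and your radial-foliation step correctly handles the locally-constant-but-jumping limit functions. (One small repair there: vanishing a.e.\ on a two-dimensional component says nothing a.e.\ along a one-dimensional segment, so Fubini is the wrong tool; instead note that the finitely many components of $D_2\setminus\Gamma_1$ are open and each contains an admissible radial segment, hence meets $D_2\setminus D_1$ in a set of positive measure, forcing the constant to be zero on every component.)

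The one substantive shortfall is the dependence of the constant. The statement asserts $C_\rho$ depends \emph{only on $\rho$}, and the paper's proof delivers this: the divergence identity carries constant $1$, the Hardy constant is $\max\{\rho^{-2},(1-\rho)^{-2}\}$, and the trace constant in \eqref{eq:sthain2} can be taken configuration-free because the cracks are radial (integrate along radial segments from $\partial D_1$, which never cross $\Gamma_1$). Your contradiction argument, by contrast, is run for a fixed crack set $\Gamma_1$, so the constant it produces depends a priori on the whole configuration $(k,\alpha^1,\dots,\alpha^k,r_1,\dots,r_k)$, not only on $\rho$; the normalized sequence lives in a space tied to $\Gamma_1$, and nothing in the argument yields uniformity as the configuration varies (e.g.\ as two crack directions degenerate). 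Within this paper the configuration is fixed throughout, so your weaker conclusion would suffice for all downstream uses, but as a proof of the proposition as stated it proves less than claimed; to recover the only-$\rho$ dependence you would either need a compactness argument uniform over configurations or, more simply, the paper's direct divergence-theorem computation.
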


\begin{proof}
If $(\varphi,\psi)\in \widetilde{\mc{X}}$ then $(\varphi,\psi)\in \widetilde{\mc{X}}_r$ for every $r> 1$, so that, by
\eqref{ineq_hardy_annuli},
\begin{align*}
\int_{\R^2\setminus D_1} \frac{\varphi^2+\psi^2}{|x|^2} \, dx 
&=\sum_{h=0}^{\infty} \int_{D_{2^{h+1}}\setminus D_{2^h}}\frac{\varphi^2+\psi^2}{|x|^2}\, dx \\
&\le \max\left\{\frac1{\rho^2},\frac1{(1-\rho)^2}\right\}  \sum_{h=0}^{\infty} \int_{(D_{2^{h+1}}\setminus D_{2^h})\setminus \Gamma_0} (|\nabla \varphi|^2 +|\nabla \psi|^2) \, dx\\
&=\max\left\{\frac1{\rho^2},\frac1{(1-\rho)^2}\right\}  \int_{(\R^2\setminus D_1)\setminus\Gamma_1} (|\nabla \varphi|^2 +|\nabla \psi|^2) \, dx.
\end{align*}
 Inequality \eqref{ineq_hardy_R2} is thereby proved.

Integrating the identity $\dive((\varphi^2+\psi^2)x)=2(\varphi\nabla\varphi +\psi\nabla \psi) \cdot x+2 (\varphi^2 +\psi^2)$ 
on each subset of $D_1$ obtained by cutting along the lines $\Sigma^j$ and  observing that  $x\cdot \nu^j=0$ on $\Sigma^j$ for all $j=1,\dots, k$, the Diverge Theorem yields
\begin{equation}\label{eq:sthain1}
\int_{D_1}(\varphi^2 +\psi^2) \, dx \le \int_{\partial D_1} (\varphi^2 +\psi^2)\, dS +   \int_{D_1 \setminus \Gamma_1} (|\nabla\varphi|^2 +|\nabla\psi|^2) \, dx,
\end{equation}
for every $(\varphi,\psi)\in \widetilde{\mc{X}}$.
Since the trace operator $H^1((D_2\setminus D_1)\setminus \Gamma_1) \to L^2(\partial D_1)$ is continuous, 
there exists a constant $C>0$ such that 
\begin{equation}\label{eq:sthain2}
    \int_{\partial D_1} v^2\, dS\leq 
    C\left( \int_{D_2\setminus D_1} v^2\, dx
+ \int_{(D_2\setminus D_1) \setminus \Gamma_1} |\nabla v|^2 \, dx\right)\quad\text{for all }v\in 
H^1((D_2\setminus D_1)\setminus \Gamma_1).
\end{equation}
From \eqref{eq:sthain1}, \eqref{eq:sthain2}, and \eqref{ineq_hardy_R2} we deduce that  
\begin{align*}
\int_{D_1}(\varphi^2 +\psi^2) \, dx& \le C\left( \int_{D_2\setminus D_1} (\varphi^2 +\psi^2)\, dx
+ \int_{(D_2\setminus D_1) \setminus \Gamma_1} (|\nabla\varphi|^2 +|\nabla\psi|^2) \, dx\right) \\
&\qquad+\int_{D_1\setminus \Gamma_1} (|\nabla\varphi|^2 +|\nabla\psi|^2) \, dx\\
&\le 4 C \int_{D_2\setminus D_1}\frac{\varphi^2+\psi^2}{|x|^2}\, dx +(C+1)\int_{D_2\setminus \Gamma_1} (|\nabla\varphi|^2 +|\nabla\psi|^2) \, dx\\
&\le \Big(C+1+4C\max\left\{\rho^{-2},(1-\rho)^{-2}\right\}\Big) \int_{\R^2\setminus \Gamma_1} (|\nabla\varphi|^2 +|\nabla\psi|^2) \, dx,
\end{align*}
thus proving \eqref{ineq_poincarre_D1}.
\end{proof}

As a consequence of Proposition \ref{prop_ineq_hardy_R2}, we have that 
\begin{equation*}\label{norm_widetilde_X}
\norm{(\varphi,\psi)}_{\widetilde{\mc{X}}}=\left(\int_{\R^2 \setminus \Gamma_1} (|\nabla \varphi|^2+|\nabla \psi|^2) \, dx \right)^{\frac{1}{2}}
\end{equation*}
is a norm on $\widetilde{\mc{X}}$. Furthermore, $\widetilde{\mc{X}}$ is a Hilbert space with respect to the scalar product associated to this norm, and the restriction operator 
\begin{equation}\label{def_restr_operator}
\widetilde{\mc{X}} \to H^1(D_r \setminus \Gamma_1)\times H^1(D_r \setminus \Gamma_1)
\end{equation}
is well defined and continuous for all $r>0$.
This, together with the continuity of the trace operators in \eqref{def_traces}   ensures that
\begin{equation}\label{sup_tilde_H_bounded}
\sup_{(\varphi,\psi) \in \widetilde{\mc{X}}}\setminus \{(0,0)\} 
\frac{\|\gamma^j_+(\varphi)\|^2_{L^p(S^j_1)}+\|\gamma^j_+(\psi)\|^2_{L^p(S^j_1)}}{\norm{(\varphi,\psi)}^2_{\widetilde{\mc{X}}}} < +\infty
\end{equation}
for all $p \in [1,+\infty)$ and $j=1,\dots,k$.

\begin{remark}\label{remark_tilde_Hc}
Let
\begin{equation}\label{def_tilde_Hc}
\widetilde{\mc{H}}_c
:=\{(\varphi,\psi) \in \widetilde{\mc{H}}: \text{ there exists } r>0 \text{ such that } \varphi\equiv\psi\equiv 0 \text{ in } \R^2 \setminus D_r\}   
\end{equation}
Proceeding as in \cite[Proposition 6.3]{FNOS_multipole}, one can prove that $\widetilde{\mc{H}}_c$ is dense in $\widetilde{\mc{H}}$.
\end{remark}

\subsection{The asymptotic behaviour of $\mc{E}_\e$}
In this subsection, we prove an equivalent characterization of the quantity  $\mc{E}_\e$  introduced in \eqref{def_Ee} 
and use it to prove an optimal estimate for $|\mc{E}_\e|$
as $\e \to 0^+$, thus refining the estimates  obtained preliminarily in Proposition \ref{prop_Ee_lower_upper_estimates}.

\begin{proposition}\label{prop_Ee_equiv}
Let $\eta_\e$ be as in \eqref{def_eta_r} with $r=\e$. Then, for every $\e \in (0,1]$,
\begin{align}\label{eq_Ee_equiv}
\mc{E_\e}=&\,
\frac{1}{2}\int_{\Omega\setminus \Gamma_0}(|\nabla (\eta_\e v_0)|^2+|\nabla(\eta_\e w_0)|^2) \, dx -L_\e(v_0,w_0)\\
&\notag 
-\frac{1}{2} \sup_{\substack{(\varphi,\psi) \in \widetilde{\mc{H}}_\e\\(\varphi,\psi)\neq(0,0)}}
\frac{\left(\int_{\Omega\setminus \Gamma_\e} (\nabla \varphi \cdot \nabla (\eta_\e v_0)+\nabla\psi \cdot \nabla (\eta_\e w_0)) \, dx- L_\e(\varphi,\psi)\right)^2}
{\int_{\Omega\setminus \Gamma_\e}(|\nabla \varphi|^2+|\nabla \psi|^2) \, dx}.
\end{align}
\end{proposition}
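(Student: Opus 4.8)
The plan is to exploit the observation, already used in the proof of Proposition~\ref{prop_Ee_lower_upper_estimates}, that the pair $(\eta_\e v_0,\eta_\e w_0)$ lies in the affine space $(v_0,w_0)+\widetilde{\mc{H}}_\e$ over which $J_\e$ is minimized. Indeed $(\eta_\e v_0-v_0,\eta_\e w_0-w_0)=\big((\eta_\e-1)v_0,(\eta_\e-1)w_0\big)$ belongs to $\widetilde{\mc{H}}_\e$: since $\eta_\e\equiv1$ on $D_\e\supset S^j_\e$ this difference vanishes on each $S^j_\e$, while on $\Gamma^j_0$ the radial factor $\eta_\e-1$ takes the same value on both sides of the crack, so that $R^j$ and $I^j$ applied to $\big((\eta_\e-1)v_0,(\eta_\e-1)w_0\big)$ reduce to $(\eta_\e-1)R^j(v_0,w_0)=0$ and $(\eta_\e-1)I^j(v_0,w_0)=0$. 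As $\widetilde{\mc{H}}_\e$ is a linear space, the minimization set equals $(\eta_\e v_0,\eta_\e w_0)+\widetilde{\mc{H}}_\e$, and I would reparametrize each competitor as $(\eta_\e v_0+\varphi,\eta_\e w_0+\psi)$ with $(\varphi,\psi)\in\widetilde{\mc{H}}_\e$.

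First I would expand $J_\e(\eta_\e v_0+\varphi,\eta_\e w_0+\psi)$ using bilinearity of the Dirichlet form and linearity of $L_\e$, getting
\[
J_\e(\eta_\e v_0+\varphi,\eta_\e w_0+\psi)=\mathcal A_\e+\ell_\e(\varphi,\psi)+\tfrac12 q_\e(\varphi,\psi),
\]
where $\mathcal A_\e:=J_\e(\eta_\e v_0,\eta_\e w_0)$, where $q_\e$ is the quadratic form in \eqref{def_qe}, and where the linear part is
\[
\ell_\e(\varphi,\psi):=\int_{\Omega\setminus\Gamma_\e}\big(\nabla(\eta_\e v_0)\cdot\nabla\varphi+\nabla(\eta_\e w_0)\cdot\nabla\psi\big)\,dx-L_\e(\varphi,\psi).
\]
To match the constant $\mathcal A_\e$ with the first two terms of \eqref{eq_Ee_equiv} I would record two elementary identifications, both consequences of $\eta_\e\equiv1$ near the origin. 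Since $\eta_\e v_0,\eta_\e w_0$ are continuous across $S^j_\e$, their gradients agree from both sides there and $\Gamma_\e\setminus\Gamma_0=\bigcup_j S^j_\e$ is Lebesgue-null, so the Dirichlet integral over $\Omega\setminus\Gamma_\e$ equals the one over $\Omega\setminus\Gamma_0$; and since $\eta_\e\equiv1$ on each $S^j_\e$ the traces satisfy $\gamma^j_+(\eta_\e v_0)=\gamma^j_+(v_0)$ and $\gamma^j_+(\eta_\e w_0)=\gamma^j_+(w_0)$ on $S^j_\e$, whence $L_\e(\eta_\e v_0,\eta_\e w_0)=L_\e(v_0,w_0)$ directly from \eqref{def_Le}. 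Thus $\mathcal A_\e$ equals the first two terms of \eqref{eq_Ee_equiv}.

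Next I would minimize the strictly convex quadratic $\mathcal A_\e+\ell_\e(\cdot)+\tfrac12 q_\e(\cdot)$ on the Hilbert space $\big(\widetilde{\mc{H}}_\e,q_\e\big)$. The functional $\ell_\e$ is bounded by Cauchy–Schwarz together with Proposition~\ref{prop_Le_continuity}, so by Riesz representation there is a unique $(\varphi_\e,\psi_\e)\in\widetilde{\mc{H}}_\e$ with $\ell_\e(\cdot)=q_\e\big((\varphi_\e,\psi_\e),\cdot\big)$; completing the square yields
\[
\min_{(\varphi,\psi)\in\widetilde{\mc{H}}_\e}\Big(\mathcal A_\e+\ell_\e(\varphi,\psi)+\tfrac12 q_\e(\varphi,\psi)\Big)=\mathcal A_\e-\tfrac12\,q_\e(\varphi_\e,\psi_\e).
\]
Since the norm of the Riesz representative satisfies $q_\e(\varphi_\e,\psi_\e)=\sup_{(\varphi,\psi)\neq(0,0)}\ell_\e(\varphi,\psi)^2/q_\e(\varphi,\psi)$, and the latter supremum is exactly the one appearing in \eqref{eq_Ee_equiv}, the left-hand side being $\mc E_\e=J_\e(V_\e,W_\e)$ gives the claimed identity. (Consistency is automatic: $(V_\e,W_\e)=(\eta_\e v_0+\varphi_\e,\eta_\e w_0+\psi_\e)$ is precisely the solution of the Euler–Lagrange equation \eqref{eq_potential}, so no separate uniqueness argument is needed.)

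The only genuinely delicate point is the membership $\big((\eta_\e-1)v_0,(\eta_\e-1)w_0\big)\in\widetilde{\mc{H}}_\e$ and the attendant bookkeeping, i.e. verifying that multiplication by the radial cut-off $\eta_\e$ alters neither the jump conditions on $\Gamma^j_\e$ nor the value of $L_\e$. Once this is settled the proposition reduces to the elementary minimization of a coercive quadratic form, and no blow-up analysis or finer estimate is required.
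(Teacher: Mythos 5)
Your proposal is correct and follows essentially the same route as the paper: both shift the affine constraint set by $\eta_\e(v_0,w_0)$ (using that $((\eta_\e-1)v_0,(\eta_\e-1)w_0)\in\widetilde{\mc{H}}_\e$, which you verify explicitly where the paper states it implicitly) and then compute the minimum of the resulting quadratic functional, with the identifications $L_\e(\eta_\e v_0,\eta_\e w_0)=L_\e(v_0,w_0)$ and the equality of the Dirichlet integrals over $\Omega\setminus\Gamma_\e$ and $\Omega\setminus\Gamma_0$. The only cosmetic difference is that the paper evaluates the minimum by optimizing $J_\e\big(t(\varphi,\psi)+\eta_\e(v_0,w_0)\big)$ in $t\in\R$ along each direction and then taking the infimum over directions, whereas you use the Riesz representative of $\ell_\e$ and complete the square; these two computations of $-\tfrac12\norm{\ell_\e}_{*}^2$ are equivalent.
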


\begin{proof}
Since $(\varphi,\psi)-(v_0,w_0) \in \widetilde{\mc{H}}_\e$ if and only if  $(\varphi,\psi)-\eta_\e(v_0,w_0) \in \widetilde{\mc{H}}_\e$, by \eqref{def_Ee} we have 
\begin{align}\label{proof:prop_Ee_equiv_1}
\mc{E_\e}&=\inf_{(\varphi,\psi) \in \widetilde{\mc{H}}_\e}J_\e\big((\varphi,\psi)+\eta_\e(v_0,w_0)\big) =\inf_{\substack{(\varphi,\psi) \in \widetilde{\mc{H}}_\e\\
(\varphi,\psi)\neq (0,0)}}\left(\inf_{t \in\R}J_\e\big(t(\varphi,\psi)+\eta_\e(v_0,w_0)\big)\right).
\end{align}
Furthermore, thanks to \eqref{def_Je},
\begin{align*}
J_\e\big(t(\varphi,\psi)+\eta_\e(v_0,w_0)\big)=&\,\frac{t^2}{2}\int_{\Omega\setminus \Gamma_\e}(|\nabla \varphi|^2+|\nabla \psi|^2) \, dx\\
&\quad + t\left(\int_{\Omega\setminus \Gamma_\e} (\nabla \varphi \cdot \nabla (\eta_\e v_0)+\nabla\psi \cdot \nabla (\eta_\e w_0)) \, dx - L_\e(\varphi,\psi)\right)\\
&\quad +\frac{1}{2}\int_{\Omega\setminus \Gamma_0}(|\nabla (\eta_\e v_0)|^2+|\nabla(\eta_\e w_0)|^2) \, dx -L_\e(v_0,w_0).
\end{align*}
Hence, for any $(\varphi,\psi) \in \widetilde{\mc{H}}_\e\setminus \{(0,0)\}$,
\begin{align*}
&\inf_{t \in \R}J_\e\big(t(\varphi,\psi)+\eta_\e(v_0,w_0)\big)\\
&\quad =-\frac{1}{2} 
\frac{\left(\int_{\Omega\setminus \Gamma_\e} (\nabla \varphi \cdot \nabla (\eta_\e v_0)+\nabla\psi \cdot \nabla (\eta_\e w_0)) \, dx- L_\e(\varphi,\psi)\right)^2}
{\int_{\Omega\setminus \Gamma_\e}(|\nabla \varphi|^2+|\nabla \psi|^2) \, dx}\\
&\quad \quad +\frac{1}{2}\int_{\Omega\setminus \Gamma_0}(|\nabla (\eta_\e v_0)|^2+|\nabla(\eta_\e w_0)|^2) \, dx -L_\e(v_0,w_0),
\end{align*}
so that \eqref{eq_Ee_equiv} follows from \eqref{proof:prop_Ee_equiv_1}.
\end{proof}

As a consequence of \eqref{eq_Ee_equiv} we obtain the following improvement of the estimate on  $\mc{E}_\e$ obtained initially in Proposition \ref{prop_Ee_lower_upper_estimates}.
\begin{proposition}\label{prop_asymptotic_Ee}
Let $m \in \mathbb{Z}$ be as in Proposition \ref{prop_vw_asympotic_not_1/2}  with $(v,w)=(v_0,w_0)$ if $\rho\neq\frac12$, or as in  Proposition \ref{prop_vw_asympotic_1/2} if $\rho=\frac12$. Then 
\begin{equation}\label{eq_asymptotic_Ee_precise}
\mc{E}_\e= O(\e^{2|m+\rho|}) \quad \text{ as } \e \to 0^+.
\end{equation}
\end{proposition}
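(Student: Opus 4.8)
The plan is to upgrade the lower bound of Proposition \ref{prop_Ee_lower_upper_estimates} by exploiting the exact identity \eqref{eq_Ee_equiv}. Since that proposition already gives $\mc{E}_\e\le C_1\e^{2|m+\rho|}$, it suffices to prove the matching lower bound $\mc{E}_\e\ge -C\e^{2|m+\rho|}$; the exponent $2|m+\rho|-2+\frac2p$ in \eqref{ineq_Ee_upper_lower_bounds} is strictly smaller than $2|m+\rho|$ because $p>1$, so this is a genuine refinement. Writing \eqref{eq_Ee_equiv} as $\mc{E}_\e=A_\e-\tfrac12 S_\e$, where $A_\e$ collects the first two terms and $S_\e\ge0$ is the supremum, the task splits into bounding $A_\e$ and $S_\e$.

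First I would control $A_\e$. With $\eta_\e$ as in \eqref{def_eta_r} for $r=\e$, the pointwise bounds \eqref{ineq_v_pointiweise_not_1/2}--\eqref{ineq_w_pointiweise_1/2} together with $|\nabla\eta_\e|\le C/\e$ on $D_{2\e}\setminus D_\e$ give, after passing to polar coordinates,
\[
\tfrac12\int_{\Omega\setminus\Gamma_0}\big(|\nabla(\eta_\e v_0)|^2+|\nabla(\eta_\e w_0)|^2\big)\,dx=O(\e^{2|m+\rho|}),
\]
because both $\int_{D_{2\e}}|x|^{2|m+\rho|-2}\,dx$ and $\e^{-2}\int_{D_{2\e}\setminus D_\e}|x|^{2|m+\rho|}\,dx$ are of order $\e^{2|m+\rho|}$. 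Inserting $|\nabla v_0|,|\nabla w_0|\le C|x|^{|m+\rho|-1}$ and $|v_0|,|w_0|\le C|x|^{|m+\rho|}$ into \eqref{def_Le} and integrating over $S^j_\e$ gives $L_\e(v_0,w_0)=O(\e^{2|m+\rho|})$. Hence $A_\e=O(\e^{2|m+\rho|})$, and in particular $A_\e\ge -C\e^{2|m+\rho|}$.

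The heart of the matter is to show $S_\e\le C\e^{2|m+\rho|}$, which combined with the above yields $\mc{E}_\e=A_\e-\tfrac12 S_\e\ge -C\e^{2|m+\rho|}$. It is enough to bound the linear functional $\ell_\e(\varphi,\psi):=\int_{\Omega\setminus\Gamma_\e}\big(\nabla\varphi\cdot\nabla(\eta_\e v_0)+\nabla\psi\cdot\nabla(\eta_\e w_0)\big)\,dx-L_\e(\varphi,\psi)$ by $C\e^{|m+\rho|}\|(\varphi,\psi)\|_{\mc{H}_\e\times\mc{H}_\e}$ on $\widetilde{\mc{H}}_\e$. The gradient pairing is handled by Cauchy--Schwarz and the estimate $\|\nabla(\eta_\e v_0,\eta_\e w_0)\|_{L^2}=O(\e^{|m+\rho|})$ from the previous step. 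The delicate term is $L_\e(\varphi,\psi)$: the bound of Proposition \ref{prop_Le_continuity} only gives the exponent $|m+\rho|-1+\frac1p<|m+\rho|$ and is too weak. To recover the missing power of $\e$, I would rescale: for $(\varphi,\psi)\in\widetilde{\mc{H}}_\e$ set $\varphi_\e(y):=\varphi(\e y)$, $\psi_\e(y):=\psi(\e y)$, extended by $0$ outside $\Omega/\e$. Since $(\varphi,\psi)\in H^1_0(\Omega)$ satisfies the jump conditions on $\Gamma^j_\e=\Gamma^j_0\cup S^j_\e$, and the conditions hold trivially where the functions vanish, the pair $(\varphi_\e,\psi_\e)$ belongs to $\widetilde{\mc{H}}\subset\widetilde{\mc{X}}$; moreover the Dirichlet integral is scale invariant in dimension two, so $\|\nabla(\varphi_\e,\psi_\e)\|_{L^2(\R^2\setminus\Gamma_1)}=\|(\varphi,\psi)\|_{\mc{H}_\e\times\mc{H}_\e}$. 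Estimating $|\nabla v_0|,|\nabla w_0|\le C|x|^{|m+\rho|-1}$ in \eqref{def_Le} and changing variables $x=\e y$ on each $S^j_\e$ produces a factor $\e^{|m+\rho|}$ times the scale invariant integral $\int_0^1 s^{|m+\rho|-1}\big(|\gamma^j_+(\varphi_\e)|+|\gamma^j_+(\psi_\e)|\big)\,ds$. Choosing $p\in\big(1,\min\{\tfrac1\rho,\tfrac1{1-\rho}\}\big)$ so that $s\mapsto s^{|m+\rho|-1}\in L^p(0,1)$, the Hölder inequality (with conjugate exponent $p'$) and the trace estimate \eqref{sup_tilde_H_bounded} on $\widetilde{\mc{X}}$ bound this integral by $C\|\nabla(\varphi_\e,\psi_\e)\|_{L^2(\R^2\setminus\Gamma_1)}=C\|(\varphi,\psi)\|_{\mc{H}_\e\times\mc{H}_\e}$. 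Thus $|L_\e(\varphi,\psi)|\le C\e^{|m+\rho|}\|(\varphi,\psi)\|_{\mc{H}_\e\times\mc{H}_\e}$, hence $|\ell_\e(\varphi,\psi)|\le C\e^{|m+\rho|}\|(\varphi,\psi)\|_{\mc{H}_\e\times\mc{H}_\e}$, giving $S_\e\le C\e^{2|m+\rho|}$.

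The main obstacle is precisely this sharp bound on $L_\e(\varphi,\psi)$: the naive Hölder estimate of Proposition \ref{prop_Le_continuity} loses the factor $\e^{1-1/p}$, and recovering it forces one to rescale to the fixed configuration on $\Gamma_1$ and to use the two-dimensional scale invariance of the Dirichlet energy, so that the trace of $(\varphi_\e,\psi_\e)$ on the unit segments $S^j_1$ is controlled by the full gradient norm via \eqref{sup_tilde_H_bounded}. The one point that must be verified carefully is that the extension by zero genuinely lands in $\widetilde{\mc{X}}$, i.e.\ that the jump conditions survive passing from $\Gamma^j_\e$ to $\Gamma^j_1$ and hold where the functions vanish, since this is what makes the global trace inequality applicable.
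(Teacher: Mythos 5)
Your proposal is correct and follows essentially the same route as the paper's proof: starting from the identity \eqref{eq_Ee_equiv}, bounding the gradient term and $L_\e(v_0,w_0)$ via the pointwise estimates \eqref{ineq_v_pointiweise_not_1/2}--\eqref{ineq_w_pointiweise_1/2}, and sharpening the supremum term by rescaling to the unit crack configuration, using the scale invariance of the two-dimensional Dirichlet energy together with the uniform trace bound \eqref{sup_tilde_H_bounded} and H\"older's inequality (this is exactly the paper's estimate \eqref{proof:prop_asymptotic_Ee_4}, where the split $\e^{|m+\rho|-1+1/p}\cdot\e^{1/p'}$ recovers the full power $\e^{|m+\rho|}$). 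Your verification that the zero extension of the rescaled pair lands in $\widetilde{\mc{H}}$, since $R^j$ and $I^j$ vanish trivially where both traces vanish, is also the correct justification.
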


\begin{proof}
Thanks to Proposition \ref{prop_Ee_equiv} and the Cauchy-Schwartz  inequality we have 
\begin{align}\label{proof:prop_asymptotic_Ee_1}
|\mc{E}_\e| \le &\sup_{\substack{(\varphi,\psi) \in \widetilde{\mc{H}}_\e\\(\varphi,\psi)\neq(0,0)}} \frac{ |L_\e(\varphi,\psi)|^2}
{\int_{\Omega\setminus \Gamma_\e}(|\nabla \varphi|^2+|\nabla \psi|^2) \, dx}\\
&\notag \qquad+\frac{3}{2}\int_{\Omega\setminus \Gamma_0}(|\nabla (\eta_\e v_0)|^2+|\nabla(\eta_\e w_0)|^2) \, dx +|L_\e(v_0,w_0)|.
\end{align}    
In view of \eqref{def_Le},  Proposition \ref{prop_vw_asympotic_not_1/2} and Proposition \ref{prop_vw_asympotic_1/2}
\begin{multline}\label{proof:prop_asymptotic_Ee_2}
\int_{\Omega\setminus \Gamma_0}(|\nabla (\eta_\e v_0)|^2+|\nabla(\eta_\e w_0)|^2) \, dx \le 
2\int_{D_{2\e}}|\nabla \eta_\e|^2( v_0^2+ w_0^2) \, dx\\
+2\int_{D_{2\e}\setminus \Gamma_0}\eta_\e(|\nabla v_0|^2+|\nabla w_0|^2) \, dx=O(\e^{2|m+\rho|}) \quad  \text{as } \e \to 0^+,
\end{multline}
and 
\begin{equation}\label{proof:prop_asymptotic_Ee_3}
|L_\e(v_0,w_0)|=O(\e^{2|m+\rho|}) \quad  \text{as } \e \to 0^+.
\end{equation}
It only remains to estimate the first term in the right hand side of  \eqref{proof:prop_asymptotic_Ee_1}.
To this aim we notice that, 
fixing   any 
$p \in \big(1,
\min\big\{\frac1\rho,\frac1{1-\rho}\big\}\big)$ and letting $p'=\frac{p}{p-1}$,
by a change of variables,  \eqref{sup_tilde_H_bounded},
and the fact that $\widetilde{\mc{H}} \subset \widetilde{\mc{X}}$,
\begin{equation}\label{proof:prop_asymptotic_Ee_4}
\sup_{\substack{(\varphi,\psi) \in \widetilde{\mc{H}}_\e\\(\varphi,\psi)\neq(0,0)}} 
\frac{\|\gamma^j_+(\varphi)\|^2_{L^{p'}(S_\e^j)}}
{\int_{\Omega\setminus \Gamma_\e}(|\nabla \varphi|^2+|\nabla \psi|^2) \, dx}
\le \e^{{2}/{p'}}\hskip-10pt \sup_{
\substack{(\varphi,\psi) \in \widetilde{\mc{H}}\\(\varphi,\psi)\neq(0,0)}} 
\!\!\!\frac{\|\gamma^j_+(\varphi)\|^2_{L^{p'}(S_1^j)}}{\norm{(\varphi,\psi)}^2_{\widetilde{\mc{X}}}}=O\big(\e^{2/p'}\big)
\end{equation}
as $\e \to 0^+$. 
From the H\"older inequality and Propositions \ref{prop_vw_asympotic_not_1/2} and \ref{prop_vw_asympotic_1/2} it follows that
\begin{multline*}
\sup_{\substack{(\varphi,\psi) \in \widetilde{\mc{H}}_\e\\(\varphi,\psi)\neq(0,0)}} \frac{ \left(\int_{S_\e^j}|\nabla v_0| |\gamma^j_+(\varphi)|\, dS\right)^2 }
{\int_{\Omega\setminus \Gamma_\e}(|\nabla \varphi|^2+|\nabla \psi|^2) \, dx}  
\le \left(\int_{S_\e^j}|\nabla v_0|^p\, dS\right)^{\!\!\frac{2}{p}}
\hskip-7pt\sup_{\substack{(\varphi,\psi) \in \widetilde{\mc{H}}_\e\\(\varphi,\psi)\neq(0,0)}}
\frac{ \|\gamma^j_+(\varphi)\|^2_{L^{p'}(S_\e^j)} }
{\int_{\Omega\setminus \Gamma_\e}(|\nabla \varphi|^2+|\nabla \psi|^2) \, dx}\\
=O\big(\e^{2|m+\rho|-2+\frac{2}{p}}\big)\hskip-3pt\sup_{\substack{(\varphi,\psi) \in \widetilde{\mc{H}}_\e\\(\varphi,\psi)\neq(0,0)}} 
\frac{ \|\gamma^j_+(\varphi)\|^2_{L^{p'}(S_\e^j)} }
{\int_{\Omega\setminus \Gamma_\e}(|\nabla \varphi|^2+|\nabla \psi|^2) \, dx} = O(\e^{2|m+\rho|})
\end{multline*}
as $\e \to 0^+$, where we used \eqref{proof:prop_asymptotic_Ee_4} in the last estimate.
Similarly, we can prove that 
\begin{align*}
&\sup_{\substack{(\varphi,\psi) \in \widetilde{\mc{H}}_\e\\(\varphi,\psi)\neq(0,0)}}
\frac{ \left(\int_{S_\e^j}|\nabla v_0| |\gamma^j_+(\psi)|\, dS\right)^2 }
{\int_{\Omega\setminus \Gamma_\e}(|\nabla \varphi|^2+|\nabla \psi|^2) \, dx}= O(\e^{2|m+\rho|}) \quad \text{ as } \e \to 0^+, \\
& \sup_{\substack{(\varphi,\psi) \in \widetilde{\mc{H}}_\e\\(\varphi,\psi)\neq(0,0)}}
\frac{ \left(\int_{S_\e^j}|\nabla w_0| |\gamma^j_+(\varphi)|\, dS\right)^2 }
{\int_{\Omega\setminus \Gamma_\e}(|\nabla \varphi|^2+|\nabla \psi|^2) \, dx}= O(\e^{2|m+\rho|}) \quad \text{ as } \e \to 0^+, \\ 
&\sup_{\substack{(\varphi,\psi) \in \widetilde{\mc{H}}_\e\\(\varphi,\psi)\neq(0,0)}}
\frac{ \left(\int_{S_\e^j}|\nabla w_0| |\gamma^j_+(\psi)|\, dS\right)^2 }
{\int_{\Omega\setminus \Gamma_\e}(|\nabla \varphi|^2+|\nabla \psi|^2) \, dx}= O(\e^{2|m+\rho|}) \quad \text{ as } \e \to 0^+. 
\end{align*}
By the definition of $L_\e$ in \eqref{def_Le} we conclude that
\begin{equation}\label{proof:prop_asymptotic_Ee_5}
\sup_{\substack{(\varphi,\psi) \in \widetilde{\mc{H}}_\e\\(\varphi,\psi)\neq(0,0)}} \frac{ |L_\e(\varphi,\psi)|^2}
{\int_{\Omega\setminus \Gamma_\e}(|\nabla \varphi|^2+|\nabla \psi|^2) \, dx}= O(\e^{2|m+\rho|}) \quad \text{ as } \e \to 0^+.
\end{equation}
By combining \eqref{proof:prop_asymptotic_Ee_1}, \eqref{proof:prop_asymptotic_Ee_2}, \eqref{proof:prop_asymptotic_Ee_3},  and \eqref{proof:prop_asymptotic_Ee_5},
we finally obtain \eqref{eq_asymptotic_Ee_precise}.
\end{proof}

\subsection{The blow-up analysis}\label{subsec_blow_up}

To perform a blow-up analysis that provides sharp information about the asymptotic behavior of $\mathcal E_\e$ as $\e\to0^+$, a crucial initial step lies in identifying the profile that emerges as the limit of an appropriate scaling of the family $\{(V_\e,W_\e)\}_{\e}$. 
Such a limit profile turns out to be precisely  the solution to the minimization problem \eqref{min_prob_tilde}, whose existence and uniqueness are proved in the following proposition.

\begin{proposition}\label{prop_existence_tilde_VW}
There exists a unique solution $(\widetilde{V},\widetilde{W}) \in \widetilde{\mc{X}}$ to the minimization problem \eqref{min_prob_tilde}. 
Furthermore  $(\widetilde{V},\widetilde{W})$  satisfies
\begin{equation}\label{eq_potetial_tilde}
\begin{cases}
(\widetilde{V},\widetilde{W})-\eta (\Phi_0,\Psi_0) \in \widetilde{\mc{H}},\\[5pt]
\int_{\R^2\setminus \Gamma_1}(\nabla \widetilde{V} \cdot \nabla \varphi +\nabla \widetilde{W} \cdot \nabla \psi)\, dx=L(\varphi,\psi)\quad \text{for every} \quad 
(\varphi,\psi) \in \widetilde{\mc{H}},
\end{cases}
\end{equation}
with  $L$ and   $\eta$ being as in \eqref{def_L} and \eqref{def_eta}, respectively.
\end{proposition}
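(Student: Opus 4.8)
The plan is to treat \eqref{min_prob_tilde} as the minimization of a strictly convex, coercive functional over a nonempty closed affine subspace of the Hilbert space $\widetilde{\mc{X}}$, and then to read off \eqref{eq_potetial_tilde} as the associated Euler--Lagrange identity. The whole argument mirrors the one for $(V_\e,W_\e)$ in Proposition \ref{prop_potential}, with the Poincar\'e inequality on $\Omega\setminus\Gamma_\e$ replaced by the Hardy-type estimates of Proposition \ref{prop_ineq_hardy_R2}.

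First I would record the functional-analytic setting. By Proposition \ref{prop_ineq_hardy_R2}, the quantity $\norm{(\varphi,\psi)}_{\widetilde{\mc{X}}}=(\int_{\R^2\setminus\Gamma_1}(|\nabla\varphi|^2+|\nabla\psi|^2)\,dx)^{1/2}$ is a norm under which $\widetilde{\mc{X}}$ is complete, and $\widetilde{\mc{H}}$ is a closed subspace. I would then check that the admissible class $\eta(\Phi_0,\Psi_0)+\widetilde{\mc{H}}$ is nonempty, i.e.\ that $\eta(\Phi_0,\Psi_0)\in\widetilde{\mc{X}}$: since $\Phi_0,\Psi_0$ are homogeneous of degree $|m+\rho|>0$, they belong to $H^1(D_r\setminus\Gamma_1)$ for every $r$, and the radial cut-off $\eta$ makes $\nabla(\eta\Phi_0),\nabla(\eta\Psi_0)$ compactly supported, hence in $L^2(\R^2\setminus\Gamma_1)$. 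Moreover $(\Phi_0,\Psi_0)$ satisfies $R^j=I^j=0$ on $\Gamma_0^j$, inherited from $(v_0,w_0)\in\widetilde{\mc{H}}_0$ through the $C^{1,\tau}$-convergence of Propositions \ref{prop_vw_asympotic_not_1/2}--\ref{prop_vw_asympotic_1/2}; since $\eta$ is radial, hence continuous across each $\Gamma_0^j$, one has $R^j(\eta\Phi_0,\eta\Psi_0)=\eta\,R^j(\Phi_0,\Psi_0)=0$ and likewise for $I^j$, so $\eta(\Phi_0,\Psi_0)\in\widetilde{\mc{X}}$.

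Next I would verify that $L$ is continuous on $\widetilde{\mc{X}}$. From the pointwise bounds on $\nabla\Phi_0,\nabla\Psi_0$ one gets $\nabla\Phi_0,\nabla\Psi_0\in L^p(S_1^j)$ for every $p\in\big(1,\min\{\tfrac1\rho,\tfrac1{1-\rho}\}\big)$; pairing these with $\gamma_+^j(\varphi),\gamma_+^j(\psi)$ via H\"older and using the trace bound \eqref{sup_tilde_H_bounded} yields $|L(\varphi,\psi)|\le C\norm{(\varphi,\psi)}_{\widetilde{\mc{X}}}$. Writing a generic competitor as $\eta(\Phi_0,\Psi_0)+(\phi,\chi)$ with $(\phi,\chi)\in\widetilde{\mc{H}}$, the functional becomes $\tfrac12\norm{(\phi,\chi)}_{\widetilde{\mc{X}}}^2$ plus terms linear in $(\phi,\chi)$ plus a constant; continuity of $L$ then shows that $J$ is continuous, strictly convex, and coercive on the closed affine subspace (the quadratic term dominates the $O(\norm{(\phi,\chi)}_{\widetilde{\mc{X}}})$ linear terms). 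The direct method, equivalently the projection theorem applied to the quadratic part, then gives a minimizer $(\widetilde{V},\widetilde{W})$, and strict convexity gives uniqueness.

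Finally, to obtain \eqref{eq_potetial_tilde}, I would impose that the first variation vanishes along admissible directions. For $(\varphi,\psi)\in\widetilde{\mc{H}}$ the curve $t\mapsto(\widetilde{V},\widetilde{W})+t(\varphi,\psi)$ stays admissible, and
\[
\frac{d}{dt}\Big|_{t=0}J\big((\widetilde{V},\widetilde{W})+t(\varphi,\psi)\big)=\int_{\R^2\setminus\Gamma_1}(\nabla\widetilde{V}\cdot\nabla\varphi+\nabla\widetilde{W}\cdot\nabla\psi)\,dx-L(\varphi,\psi)=0,
\]
which is exactly the variational identity of \eqref{eq_potetial_tilde}; alternatively, uniqueness follows by noting that the difference of two solutions lies in $\widetilde{\mc{H}}$ and is $\widetilde{\mc{X}}$-orthogonal to all of $\widetilde{\mc{H}}$, hence has vanishing gradient and is zero. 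I expect the only genuinely delicate point to be the continuity of $L$ near the collision point, where $\nabla\Phi_0$ may be singular; this is precisely where the hypothesis $\rho\in(0,1)$ in \eqref{hp_cirulation} enters, through the integrability threshold $\min\{\tfrac1\rho,\tfrac1{1-\rho}\}$ and the trace estimate \eqref{sup_tilde_H_bounded} furnished by the Hardy inequality.
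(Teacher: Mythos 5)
Your proposal is correct and follows essentially the same route as the paper: continuity of $L$ via the $L^p$-integrability of $\nabla\Phi_0,\nabla\Psi_0$ on the segments $S_1^j$ together with the continuity of the trace operators, the direct method on the closed convex admissible set for existence of the minimizer solving \eqref{eq_potetial_tilde}, and uniqueness by testing the difference of two solutions with itself and invoking the Hardy inequality \eqref{ineq_hardy_R2}. Your extra verification that $\eta(\Phi_0,\Psi_0)\in\widetilde{\mc{X}}$ (so the affine class is nonempty) is a detail the paper leaves implicit, but it does not alter the method.
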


\begin{proof}
In view of  \eqref{def_Psi_rho} and \eqref{def_Phi_rho},  $|\nabla \Phi_0|,|\nabla \Psi_0|\in L^p(S_1^j)$ for every
$p \in \big[1,
\min\big\{\frac1\rho,\frac1{1-\rho}\big\}\big)$ and $j=1,\dots, k$. Hence the linear functional $L$ in \eqref{def_L} is well-defined and continuous, thanks to the continuity of the trace operators in \eqref{def_traces}.
In particular, the functional $J$ defined in \eqref{def_J} is continuous, convex and coercive on the closed, convex set
$\{(\varphi,\psi) \in \widetilde{\mc{X}}:(\varphi,\psi)-\eta (\Phi_0,\Psi_0) \in \widetilde{\mc{H}}\}$.
Therefore \eqref{min_prob_tilde} admits a minimizer $(\widetilde{V},\widetilde{W}) \in \widetilde{\mc{X}}$ that solves \eqref{eq_potetial_tilde}.

To prove uniqueness, we assume that  $(\widetilde{V}_1,\widetilde{W}_1)$ and $(\widetilde{V}_2,\widetilde{W}_2)$ are both solutions of \eqref{eq_potetial_tilde}. Then 
$(\widetilde{V}_1-\widetilde{V}_2,\widetilde{W}_1-\widetilde{W}_2) \in \widetilde{\mc{H}}$, and hence we may test the difference between \eqref{eq_potetial_tilde} for $(\widetilde{V}_1,\widetilde{W}_1)$  and \eqref{eq_potetial_tilde} for  $(\widetilde{V}_2,\widetilde{W}_2)$ with 
$(\widetilde{V}_1-\widetilde{V}_2,\widetilde{W}_1-\widetilde{W}_2)$. 
It follows that $\widetilde{V}_1=\widetilde{V}_2$ and $\widetilde{W}_1=\widetilde{W}_2$ by \eqref{ineq_hardy_R2}.
\end{proof}

The next step consists in considering a scaling of the functions $V_\e,W_\e$ with a factor determined by  the optimal estimate on $\mathcal E_\e$ obtained in Proposition \ref{prop_asymptotic_Ee}. To this aim,
let $m \in \mb{\Z}$ be as in Proposition \ref{prop_vw_asympotic_not_1/2} or Proposition \ref{prop_vw_asympotic_1/2} for $(v_0,w_0)$. For every $\e\in(0,1]$, letting 
$(V_\e,W_\e)$ be as in Proposition \ref{prop_potential}, we  define 
\begin{align}
& \widetilde{V}_\e (x):= \e^{-|m+\rho|}V_\e(\e x), \quad  \widetilde{W}_\e (x):= \e^{-|m+\rho|}W_\e(\e x), \label{def_tilde_VW_e}\\
& \widetilde{V}_{0,\e} (x):= \e^{-|m+\rho|}v_0(\e x),\quad \widetilde{W}_{0,\e} (x):= \e^{-|m+\rho|}w_0(\e x). \label{def_tilde_VW_0}
\end{align}
We  still denote by $\widetilde{V}_\e,\widetilde{W}_\e,\widetilde{V}_{0,\e},\widetilde{W}_{0,\e}$ 
their respective  trivial extensions in $\R^2\setminus \frac1\e\Omega$.
Then 
\begin{equation}\label{eq_tilde_VeWe_in_H}
(\widetilde{V}_\e,\widetilde{W}_\e),(\widetilde{V}_{0,\e},\widetilde{W}_{0,\e})\in  \widetilde{\mc{X}} \quad \text{ and } \quad 
(\widetilde{V}_\e-\widetilde{V}_{0,\e},\widetilde{W}_\e-\widetilde{W}_{0,\e}) \in  \widetilde{\mc{H}}.
\end{equation}  
By a change of variables and \eqref{eq_potential},  for every $(\varphi,\psi) \in \widetilde{\mc{H}}$ such that $\varphi\equiv\psi\equiv0$ in $\R^2\setminus \frac1\e\Omega$
we have
\begin{multline}\label{eq_tilde_VeWe}
\int_{\R^2\setminus \Gamma_1}\! (\nabla \widetilde{V}_\e \!\cdot\! \nabla \varphi +\nabla \widetilde{W}_\e \!\cdot \!\nabla \psi) \, dx
=\sum_{j=1}^{k}(b_j-1)\!\int_{S^j_1}[ \nabla \widetilde{V}_{0,\e} \cdot \nu^j	\gamma_+^j (\varphi)+ \nabla \widetilde{W}_{0,\e} \cdot \nu^j\gamma_+^j (\psi)] \, dS \\
-\sum_{j=1}^{k}d_j\int_{S^j_1}[ \nabla \widetilde{V}_{0,\e} \cdot \nu^j\gamma_+^j (\psi)- \nabla \widetilde{W}_{0,\e}\cdot \nu^j	\gamma_+^j (\varphi)]\, dS.
\end{multline}
In particular, \eqref{eq_tilde_VeWe} holds for every  $(\varphi,\psi) \in \widetilde{\mc{H}}_c$ (see \eqref{def_tilde_Hc}) provided $\e$ is sufficiently small. 
Furthermore, letting $\Phi_0$ and $\Psi_0$ be as in \eqref{def_Psi_rho} and \eqref{def_Phi_rho} respectively,  Proposition \ref{prop_vw_asympotic_not_1/2} in case $\rho\neq\frac12$, or Proposition \ref{prop_vw_asympotic_1/2} in case $\rho=\frac12$, imply that 
\begin{equation*}
\nabla \widetilde{V}_{0,\e}(x) \cdot \nu^j \to \nabla\Phi_0(x) \cdot \nu^j\quad \text{and} \quad 
\nabla \widetilde{W}_{0,\e}(x) \cdot \nu^j \to \nabla\Psi_0(x) \cdot \nu^j\quad\text{as }\e\to0^+
\end{equation*}
for every $x \in S_1^j$ and $j=1,\dots,k$. 
On the other hand, Proposition \ref{prop_vw_asympotic_not_1/2} and Proposition \ref{prop_vw_asympotic_1/2} imply  
\begin{equation*}
|\nabla \widetilde{V}_{0,\e}| \le C |x|^{|m+\rho|-1} \quad \text{and} \quad 
|\nabla \widetilde{W}_{0,\e}| \le C|x|^{|m+\rho|-1}\quad  \text{in } \R^2 \setminus \Gamma_0.
\end{equation*}
In particular, for every $j=1,\dots, k$ and  $p \in \big(1,
\min\big\{\frac1\rho,\frac1{1-\rho}\big\}\big)$,
\begin{equation*}
\nabla \widetilde{V}_{0,\e} \cdot \nu^j, \  \nabla \widetilde{W}_{0,\e} \cdot \nu^j  \in   L^p(S_1^j).
\end{equation*}
By the Dominated Convergence Theorem, we conclude that 
\begin{equation}\label{limit_nabla_tilde_V0W0_Lp}
\nabla \widetilde{V}_{0,\e} \cdot \nu^j \to \nabla\Phi_0 \cdot \nu^j  \quad \text{and} \quad 
\nabla \widetilde{W}_{0,\e} \cdot \nu^j \to \nabla\Psi_0 \cdot \nu^j \text{ in } L^p(S_1^j)
\end{equation}
as $\e \to 0^+$, for all $p \in \big(1,
\min\big\{\frac1\rho,\frac1{1-\rho}\big\}\big)$ and $j=1,\dots,k$.
Finally, in view of Propositions \ref{prop_vw_asympotic_not_1/2} and \ref{prop_vw_asympotic_1/2}, we know that, for any $r>0$,
\begin{equation}\label{limit_tilde_V0W0_H1}
\widetilde{V}_{0,\e} \to  \Phi_0  \quad \text{and} \quad 
\widetilde{W}_{0,\e}  \to  \Psi_0 \quad \text{in } H^1(D_r\setminus\Gamma_0),\quad\text{as }\e\to0^+.
\end{equation}

The following blow-up result ensures the convergence of the scaled family $\{(\widetilde{V}_{\e},\widetilde{W}_{\e})\}_\e$, defined in \eqref{def_tilde_VW_e}, to the non trivial profile $(\widetilde{V},\widetilde{W})$, introduced in Proposition \ref{prop_existence_tilde_VW}.
\begin{proposition}\label{prop_blow_up}
Let $m \in \mathbb{Z}$ be as in Proposition \ref{prop_vw_asympotic_not_1/2}  with $(v,w)=(v_0,w_0)$ if $\rho\neq\frac12$, or as in  Proposition \ref{prop_vw_asympotic_1/2} if $\rho=\frac12$. For every $\e \in ( 0,1]$, we consider the pairs 
$(V_\e,W_\e)$ as given in Proposition \ref{prop_potential},  and $(\widetilde{V}_\e,\widetilde{W}_\e)$ as defined in \eqref{def_tilde_VW_e}. Then 
\begin{equation}\label{limit_VWe}
(\widetilde{V}_\e,\widetilde{W}_\e) \to (\widetilde{V},\widetilde{W}) \quad \text{ strongly in } \widetilde{\mc{X}} \text{ as } \e \to 0^+,
\end{equation}
where $(\widetilde{V},\widetilde{W})$ is as in Proposition \ref{prop_existence_tilde_VW}.
\end{proposition}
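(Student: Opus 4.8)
The plan is to exploit the variational characterisation of $(\widetilde{V}_\e,\widetilde{W}_\e)$ as a rescaled minimiser, together with the sharp bound $\mc{E}_\e=O(\e^{2|m+\rho|})$ from Proposition \ref{prop_asymptotic_Ee}, in order to extract a uniform bound, pass to a weak limit, identify that limit as $(\widetilde{V},\widetilde{W})$ via the Euler--Lagrange equation \eqref{eq_potetial_tilde}, and finally upgrade weak to strong convergence by proving convergence of the $\widetilde{\mc{X}}$-norms. Throughout, $\widetilde{L}_\e(\varphi,\psi)$ denotes the right-hand side of \eqref{eq_tilde_VeWe}, i.e. the analogue of $L$ in \eqref{def_L} with $(\Phi_0,\Psi_0)$ replaced by $(\widetilde{V}_{0,\e},\widetilde{W}_{0,\e})$.

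First I would record the scaling identities. A direct change of variables $x=\e y$ using \eqref{def_tilde_VW_e}--\eqref{def_tilde_VW_0} gives $J_\e(V_\e,W_\e)=\e^{2|m+\rho|}\big(\tfrac12\|(\widetilde{V}_\e,\widetilde{W}_\e)\|_{\widetilde{\mc{X}}}^2-\widetilde{L}_\e(\widetilde{V}_\e,\widetilde{W}_\e)\big)$, whence $\tfrac12\|(\widetilde{V}_\e,\widetilde{W}_\e)\|_{\widetilde{\mc{X}}}^2-\widetilde{L}_\e(\widetilde{V}_\e,\widetilde{W}_\e)=\e^{-2|m+\rho|}\mc{E}_\e=O(1)$ by Proposition \ref{prop_asymptotic_Ee}. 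The convergence \eqref{limit_nabla_tilde_V0W0_Lp}, the H\"older inequality and the uniform trace bound \eqref{sup_tilde_H_bounded} show that $\widetilde{L}_\e\to L$ in the dual $\widetilde{\mc{X}}^*$, so in particular $\|\widetilde{L}_\e\|_{\widetilde{\mc{X}}^*}\le C$ uniformly. Coercivity then reads $\e^{-2|m+\rho|}\mc{E}_\e\ge\tfrac12\|(\widetilde{V}_\e,\widetilde{W}_\e)\|_{\widetilde{\mc{X}}}^2-C\|(\widetilde{V}_\e,\widetilde{W}_\e)\|_{\widetilde{\mc{X}}}$, which yields a uniform bound on $\|(\widetilde{V}_\e,\widetilde{W}_\e)\|_{\widetilde{\mc{X}}}$. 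Hence, up to a subsequence, $(\widetilde{V}_\e,\widetilde{W}_\e)\rightharpoonup(\widehat{V},\widehat{W})$ weakly in $\widetilde{\mc{X}}$.

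Next I would identify $(\widehat{V},\widehat{W})=(\widetilde{V},\widetilde{W})$. From the constraint $(\widetilde{V}_\e-\widetilde{V}_{0,\e},\widetilde{W}_\e-\widetilde{W}_{0,\e})\in\widetilde{\mc{H}}$, the compactness of the trace operators $\gamma_\pm^j$ on bounded annuli, and \eqref{limit_tilde_V0W0_H1} (so that $\gamma_\pm^j(\widetilde{V}_{0,\e})\to\gamma_\pm^j(\Phi_0)$ in $L^p(S_1^j)$), I would pass to the limit in $R^j=I^j=0$ on $S_1^j$ to get $R^j(\widehat{V}-\Phi_0,\widehat{W}-\Psi_0)=I^j(\widehat{V}-\Phi_0,\widehat{W}-\Psi_0)=0$ on $S_1^j$; since $\eta\equiv1$ on $D_1\supset S_1^j$, this is exactly $(\widehat{V}-\eta\Phi_0,\widehat{W}-\eta\Psi_0)\in\widetilde{\mc{H}}$, the constraint in \eqref{min_prob_tilde}. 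For the equation, any $(\varphi,\psi)\in\widetilde{\mc{H}}_c$ (see \eqref{def_tilde_Hc}) is supported in $\tfrac1\e\Omega$ for $\e$ small, so \eqref{eq_tilde_VeWe} applies; letting $\e\to0^+$, using weak convergence on the left and $\widetilde{L}_\e\to L$ on the right, gives $\int_{\R^2\setminus\Gamma_1}(\nabla\widehat{V}\cdot\nabla\varphi+\nabla\widehat{W}\cdot\nabla\psi)\,dx=L(\varphi,\psi)$, which extends to all of $\widetilde{\mc{H}}$ by the density in Remark \ref{remark_tilde_Hc}. Thus $(\widehat{V},\widehat{W})$ solves \eqref{eq_potetial_tilde} and equals $(\widetilde{V},\widetilde{W})$ by the uniqueness in Proposition \ref{prop_existence_tilde_VW}.

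Finally I would upgrade to strong convergence by showing $\|(\widetilde{V}_\e,\widetilde{W}_\e)\|_{\widetilde{\mc{X}}}\to\|(\widetilde{V},\widetilde{W})\|_{\widetilde{\mc{X}}}$. The key observation is that $(\widetilde{V}_\e-\eta\widetilde{V}_{0,\e},\widetilde{W}_\e-\eta\widetilde{W}_{0,\e})$ is an admissible test function in \eqref{eq_tilde_VeWe}: it lies in $\widetilde{\mc{H}}$ because $((1-\eta)\widetilde{V}_{0,\e},(1-\eta)\widetilde{W}_{0,\e})$ does (it vanishes on $S_1^j\subset D_1$ and inherits $R^j=I^j=0$ on $\Gamma_0^j$ from $(\widetilde{V}_{0,\e},\widetilde{W}_{0,\e})$, since $1-\eta$ is radial hence continuous across the cracks), and it is supported in $\tfrac1\e\Omega$. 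Testing \eqref{eq_tilde_VeWe} with it gives
\begin{equation*}
\|(\widetilde{V}_\e,\widetilde{W}_\e)\|_{\widetilde{\mc{X}}}^2=\int_{\R^2\setminus\Gamma_1}\!\!\big(\nabla\widetilde{V}_\e\cdot\nabla(\eta\widetilde{V}_{0,\e})+\nabla\widetilde{W}_\e\cdot\nabla(\eta\widetilde{W}_{0,\e})\big)\,dx+\widetilde{L}_\e(\widetilde{V}_\e-\eta\widetilde{V}_{0,\e},\widetilde{W}_\e-\eta\widetilde{W}_{0,\e}).
\end{equation*}
Since $\eta(\widetilde{V}_{0,\e},\widetilde{W}_{0,\e})\to\eta(\Phi_0,\Psi_0)$ strongly in $\widetilde{\mc{X}}$ by \eqref{limit_tilde_V0W0_H1}, the pairing of a weakly convergent sequence with a strongly convergent one and the convergence $\widetilde{L}_\e\to L$ yield, in the limit, precisely the right-hand side obtained by testing \eqref{eq_potetial_tilde} for $(\widetilde{V},\widetilde{W})$ with $(\widetilde{V}-\eta\Phi_0,\widetilde{W}-\eta\Psi_0)\in\widetilde{\mc{H}}$, namely $\|(\widetilde{V},\widetilde{W})\|_{\widetilde{\mc{X}}}^2$. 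Weak convergence together with convergence of norms gives strong convergence in the Hilbert space $\widetilde{\mc{X}}$; as the limit is independent of the subsequence, the Urysohn subsequence principle upgrades this to the full family, proving \eqref{limit_VWe}. I expect this last paragraph to be the main obstacle: one must verify carefully that $(\widetilde{V}_\e-\eta\widetilde{V}_{0,\e},\widetilde{W}_\e-\eta\widetilde{W}_{0,\e})$ is genuinely admissible (the gradient of $(1-\eta)\widetilde{V}_{0,\e}$ is \emph{not} uniformly bounded as $\e\to0^+$, yet it is finite for each fixed $\e$ since $\widetilde{V}_{0,\e}$ has bounded support) and that the resulting limiting identity matches the Euler--Lagrange identity for $(\widetilde{V},\widetilde{W})$.
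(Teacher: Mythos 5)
Your proposal is correct and follows essentially the same route as the paper's proof: boundedness of $(\widetilde{V}_\e,\widetilde{W}_\e)$ in $\widetilde{\mc{X}}$ from $\mc{E}_\e=O(\e^{2|m+\rho|})$ together with the trace bound \eqref{sup_tilde_H_bounded}, identification of the weak limit through \eqref{eq_tilde_VeWe} tested on $\widetilde{\mc{H}}_c$, density, and the uniqueness in Proposition \ref{prop_existence_tilde_VW}, and finally norm convergence by testing \eqref{eq_tilde_VeWe} with $(\widetilde{V}_\e-\eta\widetilde{V}_{0,\e},\widetilde{W}_\e-\eta\widetilde{W}_{0,\e})$ and \eqref{eq_potetial_tilde} with $(\widetilde{V}-\eta\Phi_0,\widetilde{W}-\eta\Psi_0)$, concluding via the Urysohn principle. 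Your only departures are presentational — packaging the convergence of the boundary terms as $\widetilde{L}_\e\to L$ in $\widetilde{\mc{X}}^*$, and explicitly verifying the admissibility of the test pair (via $((1-\eta)\widetilde{V}_{0,\e},(1-\eta)\widetilde{W}_{0,\e})\in\widetilde{\mc{H}}$), a point the paper uses silently — and both are sound.
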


\begin{proof}
By \eqref{def_tilde_VW_e}, a change  of variables, \eqref{def_Ee}, \eqref{def_Le}, Proposition \ref{prop_asymptotic_Ee}, 
Proposition \ref{prop_vw_asympotic_not_1/2}, Proposition \ref{prop_vw_asympotic_1/2},  the H\"older inequality, 
and \eqref{sup_tilde_H_bounded},
we have 
\begin{align}\label{proof:prop_blow_up_0.5}
\|(\widetilde{V}_\e,\widetilde{W}_\e)\|_{\widetilde{\mc{X}}}^2&=\int_{\R^2 \setminus \Gamma_1} (|\nabla \widetilde{V}_\e |^2+|\nabla \widetilde{W}_\e|^2) \, dx
=\e^{ -2|m+\rho|}\norm{(V_\e,W_\e)}_{\mc{H}_\e \times\mc{H}_\e}^2\\
\notag &=2\e^{ -2|m+\rho|}\left(L_\e(V_\e,W_\e)+\mc{E}_\e\right)\\
\notag &\le O(1)+4\e^{-2|m+\rho|}\sum_{j=1}^{k}\int_{S^j_\e}( |\nabla v_0|	|\gamma_+^j (V_\e)|+ |\nabla w_0| |\gamma_+^j (W_\e)|) \,  dS \\
\notag &\quad\quad  +2\e^{-2|m+\rho|}\sum_{j=1}^{k}\int_{S^j_\e}( |\nabla v_0||\gamma_+^j (W_\e)|+ |\nabla w_0| |\gamma_+^j (V_\e)|)\, dS \\
\notag &\le O(1)+O(1)\sum_{j=1}^{k}\int_{S^j_1}|x|^{|m+\rho|-1}	(|\gamma_+^j (\widetilde V_\e)|+|\gamma_+^j (\widetilde{W}_\e)|)\,  dS\\
\notag &\le O(1)+O(1)\|(\widetilde{V}_\e,\widetilde{W}_\e)\|_{\widetilde{\mc{X}}} \quad \text{ as } \e \to 0^+.
\end{align}   
We conclude that $\{(\widetilde{V}_\e,\widetilde{W}_\e)\}_{\e \in (0,1]}$ is bounded in $\widetilde{\mc{X}}$. Hence, for any sequence $\e_n \to 0^+$, there exist a subsequence, still denoted by $\{\e_n\}_{n \in  \mathbb{N}}$, and $(V,W) \in \widetilde{\mc{X}}$ such that 
$(\widetilde{V}_\e,\widetilde{W}_\e) \rightharpoonup (V,W)$ weakly in  $\widetilde{\mc{X}}$ as $n \to \infty$.
Since $(V-\eta \Phi_0,W-\eta \Psi_0) \in \widetilde{\mc{H}}$ by  \eqref{eq_tilde_VeWe_in_H}
and \eqref{limit_tilde_V0W0_H1}, we deduce from
\eqref{eq_tilde_VeWe}, \eqref{limit_tilde_V0W0_H1}, and the density of $\widetilde{\mc{H}}_c$ in $\widetilde{\mc{H}}$ (see Remark \ref{remark_tilde_Hc}) that $(V,W)$ solves \eqref{eq_potetial_tilde}.
Hence, by  uniqueness of the solution of \eqref{eq_potetial_tilde} proved in Proposition \ref{prop_existence_tilde_VW}, we conclude that $(V,W)=(\widetilde{V},\widetilde{W})$.

Furthermore, since $(\widetilde{V}-\eta \Phi_0,\widetilde{W}-\eta \Psi_0) \in \widetilde{\mc{H}}$, we may test \eqref{eq_potetial_tilde} with 
$(\widetilde{V}-\eta \Phi_0,\widetilde{W}-\eta \Psi_0) \in \widetilde{\mc{H}}$, thus obtaining
\begin{multline}\label{proof:prop_blow_up_1}
\int_{\R^2\setminus \Gamma_1}  (|\nabla \widetilde{V}|^2+|\nabla \widetilde{W}|^2) \, dx
=\int_{\R^2\setminus \Gamma_1} [\nabla \widetilde{V}\cdot \nabla (\eta \Phi_0) +\nabla \widetilde{W}\cdot \nabla (\eta \Psi_0)]\, dx\\
+\sum_{j=1}^{k}(b_j-1)\int_{S^j_1}[ \nabla\Phi_0 \cdot \nu^j	\gamma_+^j (\widetilde{V}-\eta \Phi_0)
+ \nabla\Psi_0 \cdot \nu^j\gamma_+^j (\widetilde{W}-\eta \Psi_0)] \, dS \\
-\sum_{j=1}^{k}d_j\int_{S^j_1}[ \nabla\Phi_0 \cdot \nu^j\gamma_+^j (\widetilde{W}-\eta \Psi_0)
- \nabla\Psi_0 \cdot \nu^j	\gamma_+^j (\widetilde{V}-\eta \Phi_0)]\, dS.
\end{multline}
On the other hand, testing \eqref{eq_tilde_VeWe} with $(\widetilde{V}_{\e_n}-\eta \widetilde{V}_{0,\e_n},\widetilde{W}_{\e_n}-\eta \widetilde{W}_{0,\e_n})$ yields
\begin{multline}\label{proof:prop_blow_up_2}
\int_{\R^2\setminus \Gamma_1}  (|\nabla \widetilde{V}_{\e_n}|^2+|\nabla \widetilde{W}_{\e_n}|^2) \, dx
=\int_{\R^2\setminus \Gamma_1} [\nabla \widetilde{V}_{\e_n}\cdot \nabla (\eta \widetilde{V}_{0,\e_n})
+\nabla \widetilde{W}_{\e_n}\cdot \nabla (\eta \widetilde{W}_{0,\e_n})]\, dx\\
+\sum_{j=1}^{k}(b_j-1)\int_{S^j_1}[ \nabla \widetilde{V}_{0,\e_n}\cdot \nu^j	\gamma_+^j (\widetilde{V}_{\e_n}-\eta \widetilde{V}_{0,\e_n})
+ \nabla  \widetilde{W}_{0,\e_n}\cdot \nu^j\gamma_+^j (\widetilde{W}_{\e_n}-\eta \widetilde{W}_{0,\e_n})] \, dS \\
-\sum_{j=1}^{k}d_j\int_{S^j_1}[ \nabla \widetilde{V}_{0,\e_n} \cdot \nu^j\gamma_+^j (\widetilde{W}_{\e_n}-\eta \widetilde{W}_{0,\e_n})
- \nabla \widetilde{W}_{0,\e_n}\cdot \nu^j	\gamma_+^j (\widetilde{V}_{\e_n}-\eta \widetilde{V}_{0,\e_n})]\, dS.
\end{multline}
Taking into account \eqref{proof:prop_blow_up_1}, \eqref{proof:prop_blow_up_2}, 
\eqref{limit_nabla_tilde_V0W0_Lp}, \eqref{limit_tilde_V0W0_H1},
the continuity of the trace operators in \eqref{def_traces}, and the weak convergence of $(\widetilde{V}_{\e_n},\widetilde{W}_{\e_n})$ to $(\widetilde{V},\widetilde{W})$ in $\widetilde{\mathcal X}$, we conclude that 
\begin{equation*}
\int_{\R^2\setminus \Gamma_1}  (|\nabla \widetilde{V}|^2+|\nabla \widetilde{W}|^2) \, dx
=\lim_{n \to \infty}\int_{\R^2\setminus \Gamma_1}  (|\nabla \widetilde{V}_{\e_n}|^2+|\nabla \widetilde{W}_{\e_n}|^2) \, dx.
\end{equation*}
This proves \eqref{limit_VWe}, taking into account  the weak convergence  $(\widetilde{V}_{\e_n},\widetilde{W}_{\e_n})\rightharpoonup(\widetilde{V},\widetilde{W})$ in $\widetilde{\mathcal X}$ and the Urysohn Subsequence Principle.
\end{proof}

With Proposition \ref{prop_blow_up} established, we are now ready to prove Theorem \ref{theorem_asymptotic_precise}.
\begin{proof}[Proof of Theorem \ref{theorem_asymptotic_precise}]
 By \eqref{def_Le}, \eqref{def_Ee}, \eqref{def_tilde_VW_e}, \eqref{def_tilde_VW_0}, and a change of variables, we have  
\begin{align}\label{proof:theorem_asymptotic_precise_1}
\e^{-2 |m+\rho|} \mc{E}_\e
&=\frac{1}{2}\int_{\R^2\setminus \Gamma_1} (|\nabla \widetilde{V}_\e|^2+ |\nabla \widetilde{W}_\e|^2) \, dx\\
&\notag \quad -\sum_{j=1}^{k}(b_j-1)\int_{S^j_1}[ \nabla \widetilde{V}_{0,\e} \cdot \nu^j	\gamma_+^j (\widetilde{V}_\e)
+ \nabla \widetilde{W}_{0,\e} \cdot \nu^j\gamma_+^j (\widetilde{W}_\e)] \, dS \\
&\notag \quad+\sum_{j=1}^{k}d_j\int_{S^j_1}[ \nabla \widetilde{V}_{0,\e} \cdot \nu^j\gamma_+^j (\widetilde{W}_\e)
-\nabla \widetilde{W}_{0,\e}\cdot \nu^j\gamma_+^j (\widetilde{V}_\e)]\, dS.
\end{align}
In view of \eqref{limit_nabla_tilde_V0W0_Lp}, \eqref{limit_VWe}, and  the continuity of the trace operators in \eqref{def_traces}, we may pass to the limit in \eqref{proof:theorem_asymptotic_precise_1}, as $\e \to 0^+$, and conclude that 
\begin{align}\label{proof:theorem_asymptotic_precise_2}
\lim_{\e \to 0^+}\e^{-2 |m+\rho|} \mc{E}_\e
&=\frac{1}{2}\int_{\R^2\setminus \Gamma_1} (|\nabla \widetilde{V}|^2+ |\nabla \widetilde{W}|^2) \, dx\\
&\notag \quad-\sum_{j=1}^{k}(b_j-1)\int_{S^j_1}[ \nabla\Phi_0 \cdot \nu^j\gamma_+^j (\widetilde{V})
+ \nabla\Psi_0 \cdot \nu^j\gamma_+^j (\widetilde{W})] \, dS \\
&\notag \quad+\sum_{j=1}^{k}d_j\int_{S^j_1}[ \nabla\Phi_0 \cdot \nu^j\gamma_+^j (\widetilde{W})
-\nabla\Psi_0\cdot \nu^j\gamma_+^j (\widetilde{V})]\, dS=\mc{E}.
\end{align}
Hence we have proved (i). Furthermore, by \eqref{def_tilde_VW_0}, \eqref{limit_nabla_tilde_V0W0_Lp} and \eqref{limit_tilde_V0W0_H1},
\begin{align}
    \label{proof:theorem_asymptotic_precise_3}
\e^{-2 |m+\rho|} L_\e(v_0,w_0)
&=\sum_{j=1}^{k}(b_j-1)\int_{S^j_1}[ \nabla \widetilde{V}_{0,\e} \cdot \nu^j	\gamma_+^j (\widetilde{V}_{\e,0})
+ \nabla \widetilde{W}_{0,\e} \cdot \nu^j\gamma_+^j (\widetilde{W}_{\e,0})] \, dS \\
&\notag \quad -\sum_{j=1}^{k}d_j\int_{S^j_1}[ \nabla \widetilde{V}_{0,\e} \cdot \nu^j\gamma_+^j (\widetilde{W}_{\e,0})
-\nabla \widetilde{W}_{0,\e}\cdot \nu^j\gamma_+^j (\widetilde{V}_{\e,0})]\, dS\\
&\notag=\sum_{j=1}^{k}(b_j-1)\int_{S^j_1}[ \nabla\Phi_0 \cdot \nu^j	\gamma_+^j (\Phi_0)
+ \nabla\Psi_0 \cdot \nu^j\gamma_+^j (\Psi_0)] \, dS \\
&\notag\quad -\sum_{j=1}^{k}d_j\int_{S^j_1}[ \nabla\Phi_0 \cdot \nu^j \gamma_+^j(\Psi_0)
-\nabla\Psi_0\cdot \nu^j\gamma_+^j (\Phi_0)]\, dS+o(1)\\
&\notag=L(\Phi_0,\Psi_0)+o(1), \text{ as } \e \to 0^+.
\end{align}
Finally claim (ii) follows from \eqref{eq_asymptotioc_eigenvlaues_not_precise}, \eqref{proof:theorem_asymptotic_precise_2}, \eqref{proof:theorem_asymptotic_precise_3}, and \eqref{proof:prop_blow_up_0.5}, which in particular implies that $\norm{(V_\e,W_\e)}_{\mc{H}_\e \times \mc{H}_\e}^2=O(\e^{2|m+\rho|})$ as $\e \to 0^+$.
\end{proof}

We conclude this subsection by proving Proposition \ref{prop_positive_negative}. We start with  a technical lemma.

\begin{lemma}\label{lemma_positive_negative}
Under the same assumptions as in Proposition \ref{prop_positive_negative}, the following holds.
 \begin{enumerate}[\rm (i)]
\item $L\big|_{\widetilde{\mathcal H}}\not \equiv 0$ if $\alpha^j \in -\gamma+\frac{\pi}{2m+1}(1+2 \mathbb Z)$ 
for every $j=1, \dots, k$. 
\item $\eta(\Phi_0,\Psi_0) \not \in \widetilde{\mc{H}}$ if $\alpha^j \in  -\gamma+\frac{2\pi}{2m+1}\mathbb Z$   for every $j=1, \dots, k$, with $\eta$ as in \eqref{def_eta}.
\end{enumerate}
\end{lemma}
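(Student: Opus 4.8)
The plan is to reduce both statements to the boundary behaviour of the homogeneous profiles $\Phi_0,\Psi_0$ on the segments $S^j_1$, and then to exploit that the $2\times2$ matrix encoding the coupling conditions $R^j,I^j$ is invertible precisely because $\rho^j\notin\Z$.

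First I would record, for each $j$, the trace and the normal derivative of $\Phi_0,\Psi_0$ along $S^j_1$. The crucial observation is that the jumps of $f$ in \eqref{def_f} occur exactly in the directions $\alpha^\ell+\pi$, which by the standing assumption $\alpha^j\neq\alpha^\ell\pm\pi$ are all different from $\alpha^j$; hence $f$ is constant near $t=\alpha^j$ and the profiles are smooth across $S^j_1$. Writing $\nabla g\cdot\nu^j=\tfrac1r\,\partial_t g$ on $S^j_1$ (since $\nu^j$ is the angular unit vector there) and using \eqref{def_Psi_rho}--\eqref{def_Phi_rho} with $\rho=\tfrac12$, I would get that on $S^j_1$ the traces $\gamma^j_+(\Phi_0),\gamma^j_+(\Psi_0)$ are proportional to $\cos\big((m+\tfrac12)(\gamma+\alpha^j)\big)$, while the normal derivatives $\nabla\Phi_0\cdot\nu^j,\nabla\Psi_0\cdot\nu^j$ are proportional to $\sin\big((m+\tfrac12)(\gamma+\alpha^j)\big)$, the remaining angular factor $(\cos2\pi f(\alpha^j),\sin2\pi f(\alpha^j))$ having unit modulus. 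Since $m+\tfrac12=\tfrac{2m+1}{2}$, the hypothesis of (i) gives $(m+\tfrac12)(\gamma+\alpha^j)\in\tfrac\pi2+\pi\Z$ (so the traces vanish but the normal derivatives do not), whereas the hypothesis of (ii) gives $(m+\tfrac12)(\gamma+\alpha^j)\in\pi\Z$ (so the normal derivatives vanish but the traces do not).

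For (ii), since $S^j_1\subset D_R\subset D_1$, the cutoff satisfies $\eta\equiv1$ on a neighbourhood of $S^j_1$, so $\eta(\Phi_0,\Psi_0)$ agrees with $(\Phi_0,\Psi_0)$ there and membership in $\widetilde{\mc H}$ reduces to $R^j(\Phi_0,\Psi_0)=I^j(\Phi_0,\Psi_0)=0$ on every $S^j_1$. Using continuity of $\Phi_0,\Psi_0$ across $S^j_1$ (so $\gamma^j_-=\gamma^j_+$), the pair $\big(R^j,I^j\big)$ becomes the image of $\big(\gamma^j_+(\Phi_0),\gamma^j_+(\Psi_0)\big)$ under the matrix $\left(\begin{smallmatrix}1-b_j & d_j\\ -d_j & 1-b_j\end{smallmatrix}\right)$, whose determinant $2(1-\cos2\pi\rho^j)$ is nonzero because $\rho^j\notin\Z$. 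Hence $R^j=I^j=0$ forces the traces to vanish, i.e. $\cos\big((m+\tfrac12)(\gamma+\alpha^j)\big)=0$; as this cosine equals $\pm1$ under the hypothesis of (ii), the jump condition fails on each $S^j_1$ and $\eta(\Phi_0,\Psi_0)\notin\widetilde{\mc H}$.

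For (i), I would prove $L\big|_{\widetilde{\mc H}}\not\equiv0$ by testing against localized competitors. Fixing $j_0$ and an interior point of $S^{j_0}_1$, I would take a small ball $U$ around it meeting only the line $\Sigma^{j_0}$ (not the origin nor the other cracks), prescribe arbitrary one-sided traces $g,h$ on $S^{j_0}_1\cap U$, determine the opposite traces through $R^{j_0}=I^{j_0}=0$ (a rotation by $2\pi\rho^{j_0}$), and extend to a pair in $H^1(U\setminus\Gamma_1)$ supported in $U$; such a pair lies in $\widetilde{\mc H}$, satisfying trivially all conditions on the cracks it avoids. Then \eqref{def_L} leaves only the $j_0$-term, giving $L=\int_{S^{j_0}_1}g\,[(b_{j_0}-1)A+d_{j_0}B]\,dS+\int_{S^{j_0}_1}h\,[(b_{j_0}-1)B-d_{j_0}A]\,dS$, with $A=\nabla\Phi_0\cdot\nu^{j_0}$ and $B=\nabla\Psi_0\cdot\nu^{j_0}$. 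If $L$ vanished for all $g,h$, both brackets would vanish; the invertible matrix $\left(\begin{smallmatrix}b_{j_0}-1 & d_{j_0}\\ -d_{j_0} & b_{j_0}-1\end{smallmatrix}\right)$ of determinant $2(1-b_{j_0})\neq0$ would then force $A=B=0$ on $S^{j_0}_1$, contradicting the fact that $A^2+B^2$ is a positive multiple of $\sin^2\big((m+\tfrac12)(\gamma+\alpha^{j_0})\big)=1$ in case (i).

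The routine part is the explicit polar computation of the traces and normal derivatives; the one genuinely delicate point is the construction of the localized test function in (i), namely producing a pair in $\widetilde{\mc H}$ with freely prescribed one-sided traces on a piece of $S^{j_0}_1$ that respects the coupling there while vanishing across the remaining cracks and decaying at infinity. This is where I would spend the most care, relying on standard trace-extension on each side of the slit and gluing across $U$.
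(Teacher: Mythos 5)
Your proposal is correct and takes essentially the same route as the paper's proof: the same explicit polar computation of traces and normal derivatives of $(\Phi_0,\Psi_0)$ on $S^j_1$ under the two angle hypotheses, the same localized test pair with freely prescribed one-sided traces near an interior point of $S^{j_0}_1$ for part (i) (the paper uses bumps supported in a small ball around $\tfrac12 a^{j_0}$ and normalizes to arbitrary constants $c_1,c_2$), and the same use of invertibility of the $2\times2$ coupling matrix, with determinants $2(1-b_{j_0})$ and $2(1-\cos 2\pi\rho^j)$ nonzero since $\rho^j\notin\Z$. The only differences are cosmetic: you bypass the paper's case distinction $\alpha^{j_0}\in[0,\pi]$ versus $\alpha^{j_0}\in(-\pi,0)$ by keeping the factor $(\cos 2\pi f,\sin 2\pi f)$ abstract as a unit vector, and you spell out the matrix argument in (ii) that the paper leaves implicit.
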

\begin{proof}
Let $j_0$ be such that $\alpha^{j_0}=\min\{\alpha^j:j=1,\dots,k\}$.

(i) Let $\ell \in \mathbb Z$ be such that $\alpha^{j_0}= -\gamma+\frac{\pi}{2m+1}(1+2\ell) \in
(-\pi,\pi]$. If $\alpha^{j_0} \in [0,\pi]$
it is easy to see that, letting $f$ be as in \eqref{def_f}, $f(\alpha^{j_0})=0$, and so 
\begin{align*}
&\nabla \Phi_0(r\cos(\alpha^{j_0}),r\sin(\alpha^{j_0})) \cdot \nu^{j_0}  
=(-1)^{\ell+1}\beta \left(m+\tfrac{1}{2}\right)r^{m-\frac{1}{2}},\\
&\nabla \Psi_0(r\cos(\alpha^{j_0}),r\sin(\alpha^{j_0})) \cdot \nu^{j_0}
=0.
\end{align*}
On the other hand,  if $\alpha^{j_0} \in (-\pi,0)$,
\begin{align}
&\nabla \Phi_0(r\cos(\alpha^{j_0}),r\sin(\alpha^{j_0})) \cdot \nu^{j_0}  
=(-1)^{\ell}\beta \left(m+\tfrac{1}{2}\right)\cos(2\pi f(\alpha^{j_0}+2\pi))r^{m-\tfrac{1}{2}},\\
&\nabla \Psi_0(r\cos(\alpha^{j_0}),r\sin(\alpha^{j_0})) \cdot \nu^{j_0}
=(-1)^{\ell}\beta \left(m+\tfrac{1}{2}\right)\sin(2\pi f(\alpha^{j_0}+2\pi))r^{m-\frac{1}{2}}.
\end{align}
Let $\varphi,\psi:\overline{\pi^{j_0}_+}\to\R$ be non-negative,  smooth functions such that   $\gamma^{j_0}_+(\varphi)\not\equiv0$, $\gamma^{j_0}_+(\psi)\not\equiv0$, and
$\varphi\equiv \psi \equiv 0$ in $\pi^{j_0}_+ \setminus  D_{r}\big({\frac{1}{2}{a_{j_0}}}\big)$, where  $r>0$ is chosen sufficiently small to have $r<\frac12|a^{j_0}|$ and $D_r\big(\frac12 a^{j_0}\big)\cap \Gamma^j_1=\emptyset$ for all all $j\neq j_0$.
Let  
\begin{align}
&\widetilde\varphi:=c_1\left((-1)^{\ell}\beta \left(m+\tfrac{1}{2}\right)\int_{S_1^{j_0}}r^{m-\frac{1}{2}}\gamma_+^j (\varphi) \, dS\right)^{-1}  \varphi, \\
&\widetilde \psi:=c_2\left((-1)^{\ell}\beta \left(m+\tfrac{1}{2}\right)\int_{S_1^{j_0}}r^{m-\frac{1}{2}}\gamma_+^j (\psi) \, dS\right)^{-1}  \psi.
\end{align}
It is possible to extend $(\widetilde \varphi,\widetilde\psi)$ to the whole $\R^2$ obtaining a pair in $\widetilde{\mc{H}}$, still denoted as $(\widetilde \varphi,\widetilde\psi)$.

Assume that $\alpha^{j_0} \in (-\pi,0)$.
Then, by \eqref{def_L},
\begin{align}
\label{eq:lnot01}L(\widetilde \varphi,\widetilde\psi)&=c_1  [(b_{j_0}-1)
\cos(2\pi f(\alpha^{j_0}+2\pi))+d_{j_0}\sin(2\pi f(\alpha^{j_0}+2\pi))]\\
\notag&\quad +c_2[(b_{j_0}-1)
\sin(2\pi f(\alpha^{j_0}+2\pi))-d_{j_0}\cos(2\pi f(\alpha^{j_0}+2\pi))].
\end{align}
If $L\big|_{\widetilde{\mathcal H}} \equiv 0$ then, by \eqref{eq:lnot01}  and the arbitrariness of $c_1,c_2$, we could conclude that 
\begin{align}
&(b_{j_0}-1)\cos(2\pi f(\alpha^{j_0}+2\pi))+d_{j_0}\sin(2\pi f(\alpha^{j_0}+2\pi))=0,\\
&(b_{j_0}-1)\sin(2\pi f(\alpha^{j_0}+2\pi))-d_{j_0}\cos(2\pi f(\alpha^{j_0}+2\pi))=0.
\end{align}
Hence $b_{j_0}=1$ and $d_{j_0}=0$, which contradicts \eqref{def_bj_dj} since $\rho^{j_0}\not \in \mb{Z}$.
If  $\alpha^{j_0} \in [0,\pi]$ we can argue similarly.

(ii) By \eqref{def_Psi_rho}--\eqref{def_Phi_rho} we have, for every $j=1,\dots,k$, $\gamma^j_+(\Phi_0)=\gamma^j_-(\Phi_0)$ and 
$\gamma^j_+(\Psi_0)=\gamma^j_-(\Psi_0)$  on $S_1^j$. 
Moreover $\eta\equiv 1$ on $S_1^j$ for every $j=1,\dots,k$.
Hence the condition $\eta(\Phi_0, \Psi_0) \in \widetilde{\mc{H}}$, i.e. 
$R^j(\eta(\Phi_0, \Psi_0))=I^j(\eta(\Phi_0, \Psi_0))=0$ for all $j=1,\dots,k$, 
 would imply that $\Phi_0=\Psi_0=0$ on $S_1^j$  for all $j=1,\dots,k$.

Let $\ell \in \mathbb Z$ be such that $\alpha^{j_0}= -\gamma+\frac{2\pi}{2m+1}\ell \in (-\pi,\pi]$.
By   \eqref{def_Psi_rho} and \eqref{def_Phi_rho}, either 
\begin{equation*}
 \Phi_0(r\cos(\alpha^{j_0}),r\sin(\alpha^{j_0}))
=(-1)^{\ell}\beta r^{m+\frac{1}{2}},\quad 
\Psi_0(r\cos(\alpha^{j_0}),r\sin(\alpha^{j_0})) 
=0,
\end{equation*}
 if $\alpha^{j_0} \in [0,\pi]$, or 
\begin{align}
& \Phi_0(r\cos(\alpha^{j_0}),r\sin(\alpha^{j_0}) )
=(-1)^{\ell +1}\beta\cos(2\pi f(\alpha^{j_0}+2 \pi))r^{m+\frac{1}{2}},\\
&\Psi_0(r\cos( \alpha^{j_0}),r\sin( \alpha^{j_0}) )
=(-1)^{\ell +1}\beta \sin(2\pi f(\alpha^{j_0}+2 \pi))r^{m+\frac{1}{2}},
\end{align}
if $\alpha^{j_0} \in  (-\pi,0)$. In both cases this is a contradiction. 
\end{proof}

We are now in a position to prove Proposition \ref{prop_positive_negative}.
\begin{proof}[Proof of Proposition \ref{prop_positive_negative}]
(i) If $\alpha^j \in  -\gamma+\frac{\pi}{2m+1}(1+2 \mathbb Z)$ for every $j=1,\dots,k$, a direct computation yields that $\Phi_0\equiv\Psi_0\equiv0$ on $S_1^j$ for all $j=1, \dots, k$, in view of \eqref{def_Psi_rho} and \eqref{def_Phi_rho}. It follows that   $L(\Phi_0,\Psi_0)=0$ and  $\eta(\Phi_0,\Psi_0) + \widetilde{\mc{H}} = \widetilde{\mc{H}}$.
Furthermore, $L \not \equiv 0$ in $\widetilde{\mc{H}}$ by Lemma \ref{lemma_positive_negative}-(i). 

Fixing  $(v,w) \in \widetilde{\mc{H}}$ such that $L(v,w) \neq 0$, we have 
\begin{equation*}
J(t(v,w))=\frac{t^2}{2}\int_{\R^2 \setminus \Gamma_1}(|\nabla v|^2+ |\nabla w|^2) \, dx -tL(v,w)<0
\end{equation*}
for some small $t$. Hence  $\mc{E}<0$, by \eqref{min_prob_tilde} and \eqref{def_E}. Then,  from Theorem \ref{theorem_asymptotic_precise}--(ii) it follows that that $\la_{\e,n_0}<\la_{0,n_0}$ for sufficiently small $\e$.

(ii) If $\alpha^j \in  -\gamma+\frac{2\pi}{2m+1}\mathbb Z$ for every $j=1,\dots,k$, we have $\nabla \Phi_0\cdot \nu^j\equiv0$ and $\nabla \Psi_0\cdot \nu^j\equiv0$ on $S_1^j$ for all $j=1, \dots, k$. It follows that $L=0$ and consequently $\mc{E}>0$, by Lemma \ref{lemma_positive_negative}-(ii), \eqref{min_prob_tilde} and \eqref{def_E}.
Hence $\la_{\e,n_0}>\la_{0,n_0}$ for small $\e$, in view of Theorem \ref{theorem_asymptotic_precise}.
\end{proof}

\subsection{Convergence  of eigenfunctions} The energy estimates proved in Theorem  \ref{theorem_asymptotic_with_eigenfunction} 
and the blow-up analysis performed in Subsection \ref{subsec_blow_up} allow us to  obtain the following blow-up theorem for scaled eigenfunctions and a sharp estimate for their rate of convergence in the $\mc{H}_1\times \mc{H}_1$-norm.

\begin{proposition}\label{prop_convergence_eigenfunctions}
Suppose that \eqref{hp_cirulation} and \eqref{hp_la0_simple} hold and let $(v_0,w_0)$ be as in \eqref{def_v0_w0} with $u_0$ as in \eqref{def_u0} (and \eqref{eq:propertyP} if $\rho=\frac12$).  For $\e >0$ small, let
 $u_\e$ be an eigenfunction of problem \eqref{prob_Aharonov-Bohm_multipole}, associated to the eigenvalue $\la_\e:=\lambda_{\e,n_0}$, such that the corresponding pair  $(v_\e,w_\e)$ defined in \eqref{def_ve_we} satisfies   \eqref{hp_ve_we}. 
 Let $m \in \mathbb{Z}$ be as in Proposition \ref{prop_vw_asympotic_not_1/2}  with $(v,w)=(v_0,w_0)$ if $\rho\neq\frac12$, or as in  Proposition \ref{prop_vw_asympotic_1/2} if $\rho=\frac12$. 
 Then, for every $r>0$,
\begin{equation}\label{limit_blow_up_eigenfunciton}
\e^{-|m+\rho|}(v_\e(\e x),w_\e(\e x)) \to (\Phi_0-\widetilde{V},\Psi_0-\widetilde{W}) \quad  
\text{in } H^1(D_r\setminus \Gamma_1)\times H^1(D_r\setminus \Gamma_1),
\end{equation}
as  $\e \to 0^+$,  
with $\Phi_0$ and $\Psi_0$ being as in  \eqref{def_Psi_rho}--\eqref{def_Phi_rho} 
and $(\widetilde{V},\widetilde{W})$ as in \eqref{min_prob_tilde}.
Furthermore 
\begin{equation}\label{limit_blow_up_norm_eigenfunciton}
\lim_{\e \to 0^+}\e^{-|m+\rho|}\norm{(v_\e-v_0,w_\e-w_0)}_{\mc{H}_1\times \mc{H}_1}=\|(\widetilde{V},\widetilde{W})\|_{\widetilde{\mathcal X}}. 
\end{equation}
\end{proposition}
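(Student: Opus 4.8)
The plan is to transfer the blow-up already available for the pair $(Y_\e,Z_\e):=(v_0-V_\e,w_0-W_\e)$ (studied in Theorem~\ref{theorem_asymptotic_with_eigenfunction} and Proposition~\ref{prop_blow_up}) onto the gauged eigenfunction $(v_\e,w_\e)$. First I would observe that, by \eqref{def_tilde_VW_e}--\eqref{def_tilde_VW_0}, $\e^{-|m+\rho|}(Y_\e(\e\,\cdot),Z_\e(\e\,\cdot))=(\widetilde V_{0,\e}-\widetilde V_\e,\widetilde W_{0,\e}-\widetilde W_\e)$, which by \eqref{limit_VWe} and \eqref{limit_tilde_V0W0_H1} converges to $(\Phi_0-\widetilde V,\Psi_0-\widetilde W)$ in $H^1(D_r\setminus\Gamma_1)$ for every $r>0$ (using $\Gamma_0\subset\Gamma_1$ and the continuity of the restriction from $\widetilde{\mc{X}}$). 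Thus \eqref{limit_blow_up_eigenfunciton} reduces to showing that the rescaled difference $\e^{-|m+\rho|}(v_\e-Y_\e,w_\e-Z_\e)(\e\,\cdot)$ vanishes in $H^1(D_r\setminus\Gamma_1)$.

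The heart of the argument is the sharp estimate $\|(v_\e-Y_\e,w_\e-Z_\e)\|_{\mc{H}_\e\times\mc{H}_\e}=o(\e^{|m+\rho|})$. To get it I would expand $(Y_\e,Z_\e)$ in the $L^2$-orthonormal eigenbasis $(v_\e,w_\e),(-w_\e,v_\e)$: writing $\Pi_\e(Y_\e,Z_\e)=A_\e(v_\e,w_\e)+B_\e(-w_\e,v_\e)$ from \eqref{def_Pie} and $(\chi_\e,\kappa_\e):=(Y_\e,Z_\e)-\Pi_\e(Y_\e,Z_\e)$, one finds
\begin{equation*}
(v_\e,w_\e)-(Y_\e,Z_\e)=(1-A_\e)(v_\e,w_\e)-B_\e(-w_\e,v_\e)-(\chi_\e,\kappa_\e).
\end{equation*}
All three coefficients are $o(\e^{|m+\rho|})$: the second normalization in \eqref{hp_ve_we} gives $B_\e=-\big((V_\e,W_\e),(-w_\e,v_\e)\big)_{L^2(\Omega)\times L^2(\Omega)}$, so $|B_\e|\le\|(V_\e,W_\e)\|_{L^2(\Omega)\times L^2(\Omega)}=o(\e^{|m+\rho|})$ by Proposition~\ref{prop_norm_L2_norm_nabla} and $\|(V_\e,W_\e)\|_{\mc{H}_\e\times\mc{H}_\e}=O(\e^{|m+\rho|})$ (Proposition~\ref{prop_blow_up}); then $A_\e^2+B_\e^2=\|\Pi_\e(Y_\e,Z_\e)\|^2_{L^2(\Omega)\times L^2(\Omega)}=1+o(\e^{|m+\rho|})$ by \eqref{eq_Pi(v0w0_VeWe)} yields $1-A_\e=o(\e^{|m+\rho|})$; and \eqref{claim_step_2} gives $\|(\chi_\e,\kappa_\e)\|^2_{\mc{H}_\e\times\mc{H}_\e}=o(\e^{2|m+\rho|})$. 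Since $\|(v_\e,w_\e)\|^2_{\mc{H}_\e\times\mc{H}_\e}=\la_\e$ stays bounded, the triangle inequality closes the estimate.

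To conclude \eqref{limit_blow_up_eigenfunciton} I would rescale: the function $\tilde g_\e:=\e^{-|m+\rho|}(v_\e-Y_\e,w_\e-Z_\e)(\e\,\cdot)$ belongs to $\widetilde{\mc{X}}$, since $(v_\e-Y_\e,w_\e-Z_\e)\in\widetilde{\mc{H}}_\e$ and the conditions $R^j=I^j=0$ on $\Gamma^j_\e$ rescale to the corresponding conditions on $\Gamma^j_1$; moreover $\|\nabla\tilde g_\e\|_{L^2(\R^2\setminus\Gamma_1)}=\e^{-|m+\rho|}\|(v_\e-Y_\e,w_\e-Z_\e)\|_{\mc{H}_\e\times\mc{H}_\e}\to0$. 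The Hardy--Poincaré inequalities \eqref{ineq_hardy_R2}--\eqref{ineq_poincarre_D1} of Proposition~\ref{prop_ineq_hardy_R2} then bound $\|\tilde g_\e\|_{L^2(D_r)}$ by $\|\nabla\tilde g_\e\|_{L^2(\R^2\setminus\Gamma_1)}$, so $\tilde g_\e\to0$ in $H^1(D_r\setminus\Gamma_1)$ and summation gives \eqref{limit_blow_up_eigenfunciton}. For the rate \eqref{limit_blow_up_norm_eigenfunciton} I would write $(v_\e-v_0,w_\e-w_0)=(v_\e-Y_\e,w_\e-Z_\e)-(V_\e,W_\e)$; since $\e^{-|m+\rho|}\|(v_\e-Y_\e,w_\e-Z_\e)\|_{\mc{H}_\e\times\mc{H}_\e}\to0$ while $\e^{-|m+\rho|}\|(V_\e,W_\e)\|_{\mc{H}_\e\times\mc{H}_\e}=\|(\widetilde V_\e,\widetilde W_\e)\|_{\widetilde{\mc{X}}}\to\|(\widetilde V,\widetilde W)\|_{\widetilde{\mc{X}}}$ by \eqref{limit_VWe}, the triangle inequality gives the limit (the $\mc{H}_1$- and $\mc{H}_\e$-norms of a fixed function agreeing, the cracks being negligible).

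I expect the main obstacle to be the bookkeeping that forces each of $1-A_\e$, $B_\e$ and $(\chi_\e,\kappa_\e)$ to be $o(\e^{|m+\rho|})$ rather than merely $o(1)$: this requires combining, exactly at the blow-up scale, the sharp $L^2$-bound of Proposition~\ref{prop_norm_L2_norm_nabla}, the spectral-gap estimate \eqref{claim_step_2}, and the optimal order $\|(V_\e,W_\e)\|_{\mc{H}_\e\times\mc{H}_\e}=O(\e^{|m+\rho|})$.
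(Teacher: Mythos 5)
Your proposal is correct and follows essentially the same route as the paper's proof: there the projection is split as $\Pi_\e=\Pi_\e^1+\Pi_\e^2$, with $\|\Pi_\e^2(Y_\e,Z_\e)\|_{\mc{H}_\e\times\mc{H}_\e}$ and $\|(Y_\e,Z_\e)-\Pi_\e(Y_\e,Z_\e)\|_{\mc{H}_\e\times\mc{H}_\e}$ both shown to be $o(\norm{(V_\e,W_\e)}_{\mc{H}_\e\times\mc{H}_\e})=o(\e^{|m+\rho|})$, the identity $(v_\e,w_\e)=\Pi_\e^1(Y_\e,Z_\e)/\norm{\Pi_\e^1(Y_\e,Z_\e)}_{L^2(\Omega)\times L^2(\Omega)}$ is used, and the same rescaling plus Proposition \ref{prop_blow_up} closes the argument — your coefficients $A_\e,B_\e$ are just this decomposition written in coordinates. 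The one line you should add concerns the sign of $A_\e$: from $A_\e^2+B_\e^2=1+o(\e^{|m+\rho|})$ alone you only get $|A_\e|\to 1$, and $A_\e\to+1$ rather than $-1$ must be extracted from the positivity condition in \eqref{hp_ve_we} together with $\norm{(V_\e,W_\e)}_{L^2(\Omega)\times L^2(\Omega)}=o(1)$ (the paper avoids this by computing directly $A_\e=\int_\Omega(Y_\e v_\e+Z_\e w_\e)\,dx=1+o(1)$ from the strong convergence $v_\e\to v_0$, $w_\e\to w_0$ of Lemma \ref{remark_spect_stability_eigenfunctions}).
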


\begin{proof}
Following the notations of Theorem  \ref{theorem_asymptotic_with_eigenfunction}, let $Y_\e:=v_0-V_\e$ and $Z_\e:=w_0-W_\e$, where $V_\e,W_\e$ are as in Proposition \ref{prop_potential}.
We can write the projection operator $\Pi_\e$ defined in \eqref{def_Pie} as $\Pi_\e=\Pi_\e^1+\Pi_\e^2$, where, for every $(\varphi,\psi)\in L^2(\Omega)\times L^2(\Omega)$, 
\begin{equation*}
\Pi_\e^1(\varphi,\psi)=
\bigg(\int_\Omega(\varphi v_\e+\psi w_\e)\,dx\bigg)(v_\e,w_\e),\quad 
\Pi_\e^2(\varphi,\psi)=
\bigg(\int_\Omega(\psi v_\e-\varphi w_\e)\,dx\bigg)(-w_\e,v_\e).
\end{equation*}
We observe that, in view of \eqref{hp_ve_we},
\begin{equation*}
    \Pi_\e^2(Y_\e,Z_\e)=\bigg(\int_\Omega(V_\e w_\e-W_\e v_\e)\,dx\bigg)(-w_\e,v_\e),
\end{equation*}
so that, by Proposition \ref{prop_norm_L2_norm_nabla} and the fact that 
 $\{(v_\e,w_\e)\}_{\e \in (0,1]}$ is bounded in $\mc{H}_1\times \mc{H}_1$, 
 \begin{equation}\label{eq:stimaPi2}
\|\Pi_\e^2(Y_\e,Z_\e)\|_{\mc{H}_\e \times \mc{H}_\e}=
o(\norm{(V_\e,W_\e)}_{\mc{H}_\e \times \mc{H}_\e}) \quad \text{as } \e \to 0^+. 
\end{equation}
Estimates \eqref{eq:stimaPi2} and   \eqref{eq_v0w0_VeWe_Pi} imply that  
\begin{align}\label{proof_prop_convergence_eigenfunctions_1}
\norm{\Pi_\e^1(Y_\e,Z_\e)-(Y_\e,Z_\e)}_{\mc{H}_\e \times \mc{H}_\e}
&\leq 
\norm{\Pi_\e(Y_\e,Z_\e)-(Y_\e,Z_\e)}_{\mc{H}_\e \times \mc{H}_\e}
+\norm{\Pi_\e^2(Y_\e,Z_\e)}_{\mc{H}_\e \times \mc{H}_\e}\\
&\notag =o(\norm{(V_\e,W_\e)}_{\mc{H}_\e \times \mc{H}_\e}) \quad \text{as } \e \to 0^+. 
\end{align}
Furthermore, from \eqref{eq_Pi(v0w0_VeWe)}, \eqref{eq:stimaPi2}, and \eqref{prop:appendix3_2} it follows that 
\begin{equation}\label{eq:stimaPi2L}
    \|\Pi_\e^1(Y_\e,Z_\e)\|_{L^2(\Omega)\times L^2(\Omega)}=1+o(\norm{(V_\e,W_\e)}_{\mc{H}_\e \times \mc{H}_\e}) \quad \text{as } \e \to 0^+. 
\end{equation}
Letting
\begin{equation*}
(\widetilde{Y}_\e(x), \widetilde{Z}_\e(x)):=\e^{-|m+\rho|} (\Pi_\e^1(Y_\e,Z_\e)-(Y_\e,Z_\e))(\e x) \quad \text {for every } x \in \tfrac{1}{\e}\Omega,
\end{equation*}
and extending $\widetilde{Y}_\e$ and  $\widetilde{Z}_\e$ trivially in $\R^2 \setminus \frac{1}{\e}\Omega$, we have  
$(\widetilde{Y}_\e, \widetilde{Z}_\e) \in \widetilde{\mc{H}}$ and 
\begin{align*}
\|(\widetilde{Y}_\e, \widetilde{Z}_\e)\|_{\widetilde{\mc{X}}}^2&=\e^{-2|m+\rho|}\norm{\Pi_\e^1(Y_\e,Z_\e)-(Y_\e,Z_\e)}^2_{\mc{H}_\e \times \mc{H}_\e}\\
&=\e^{-2|m+\rho|}o(\norm{(V_\e,W_\e)}^2_{\mc{H}_\e \times \mc{H}_\e}) =\|(\widetilde{V}_\e,\widetilde{W}_\e)\|^2_{\widetilde{\mc{X}}}\,o(1)=o(1)
\quad\text{as }\e \to 0^+,
\end{align*}
 in view of  a change of variables, \eqref{proof_prop_convergence_eigenfunctions_1}, \eqref{def_tilde_VW_e} and Proposition \ref{prop_blow_up}.
From the  continuity of the operator in \eqref{def_restr_operator} it follows that 
\begin{equation}\label{proof_prop_convergence_eigenfunctions_3}
\widetilde{Y}_\e, \widetilde{Z}_\e \to 0 \quad  \text{as } \e \to 0^+, \text{ strongly in } H^1(D_r\setminus \Gamma_1) \text{ for every } r>0.
\end{equation}
Let 
\begin{equation}\label{def_FeGe}
(F_\e(x),G_\e(x)):=\e^{-|m+\rho|}(\Pi_\e^1(Y_\e,Z_\e))(\e x) \quad \text {for every } x \in \tfrac{1}{\e}\Omega.
\end{equation}
We still denote  with $F_\e$ and $G_\e$ their trivial extensions  in $\R^2 \setminus  \frac{1}{\e}\Omega$.  We have
\begin{equation*}
(F_\e,G_\e)=(\widetilde{V}_{0,\e},\widetilde{W}_{0,\e})-(\widetilde{V}_\e,\widetilde{W}_\e)+(\widetilde{Y}_\e,\widetilde{Z}_\e),
\end{equation*}
with  $(\widetilde{V}_\e,\widetilde{W}_\e)$ and  $(\widetilde{V}_{0,\e},\widetilde{W}_{0,\e})$
being as in \eqref{def_tilde_VW_e} and  \eqref{def_tilde_VW_0}, respectively.
By \eqref{limit_tilde_V0W0_H1}, \eqref{limit_VWe} and \eqref{proof_prop_convergence_eigenfunctions_3} we conclude that 
\begin{equation}\label{proof_prop_convergence_eigenfunctions_4}
(F_\e, G_\e) \to (\Phi_0-\widetilde{V},\Psi_0-\widetilde{W}) 
\quad  \text{as } \e \to 0^+,
\end{equation}
strongly in $H^1(D_r\setminus \Gamma_1)
\times H^1(D_r\setminus \Gamma_1)$ for every $r>0$.

By \eqref{eq_limit_vewe_v0wo}, Proposition \ref{prop_VWe_to_0}, and Proposition \ref{prop_norm_L2_norm_nabla} we have 
\begin{equation*}
    \int_\Omega(Y_\e v_\e+Z_\e w_\e)\,dx=
        \int_\Omega(v_0 v_\e+w_0 w_\e)\,dx-
            \int_\Omega(V_\e v_\e+W_\e w_\e)\,dx=1+o(1)\quad\text{as }\e\to0^+.
\end{equation*}
Hence, taking into account also \eqref{hp_ve_we} and the definition of $\Pi_\e^1$, 
\begin{equation*}
\int_\Omega(Y_\e v_\e+Z_\e w_\e)\,dx=\|\Pi_\e^1(Y_\e,Z_\e)\|_{L^2(\Omega)\times L^2(\Omega)},
\end{equation*}
 so that 
\begin{equation}\label{eq:identify-Vewe}
(v_\e,w_\e)=\frac{\Pi_\e^1(Y_\e,Z_\e)}{\norm{\Pi_\e^1(Y_\e,Z_\e)}_{L^2(\Omega) \times L^2(\Omega)}}
\end{equation}
provided  $\e$ is sufficiently small.
In conclusion, \eqref{limit_blow_up_eigenfunciton} follows from \eqref{def_FeGe}, \eqref{proof_prop_convergence_eigenfunctions_4}, \eqref{eq:identify-Vewe} and \eqref{eq:stimaPi2L}.

Furthermore, by \eqref{eq:identify-Vewe}, \eqref{eq:stimaPi2L}, and \eqref{eq_limit_vewe_v0wo},
\begin{align}\label{proof_prop_convergence_eigenfunctions_6}
\norm{(v_\e,w_\e)-\Pi_\e^1(Y_\e,Z_\e)}_{\mc{H}_1\times{\mc{H}_1}}&=\frac{\big|1-\norm{\Pi_\e^1(Y_\e,Z_\e)}_{L^2(\Omega) \times L^2(\Omega)}\big|}{\norm{\Pi_\e^1(Y_\e,Z_\e)}_{L^2(\Omega) \times L^2(\Omega)}}\norm{\Pi_\e^1(Y_\e,Z_\e)}_{\mc{H}_1\times{\mc{H}_1}}\\
&\notag =\big|1-\norm{\Pi_\e^1(Y_\e,Z_\e)}_{L^2(\Omega) \times L^2(\Omega)}\big|\; {\norm{(v_\e,w_\e)}_{\mc{H}_1\times{\mc{H}_1}}}\\
&\notag 
=o(\norm{(V_\e,W_\e)}_{\mc{H}_\e \times \mc{H}_\e}), \quad \text{ as } \e \to 0^+. 
\end{align}
On the other hand, by \eqref{proof_prop_convergence_eigenfunctions_1},
\begin{align}\label{proof_prop_convergence_eigenfunctions_7}
\norm{\Pi_\e^1(Y_\e,Z_\e)-(v_0,w_0)}_{\mc{H}_1\times{\mc{H}_1}}^2 &=\norm{(V_\e,W_\e)}_{\mc{H}_\e\times{\mc{H}_\e}}^2
+\norm{\Pi_\e^1(Y_\e,Z_\e)-(Y_\e,Z_\e)}_{\mc{H}_\e\times{\mc{H}_\e}}^2\\
&\notag \qquad\qquad -2\big((V_\e,W_\e),\Pi_\e^1(Y_\e,Z_\e)-(Y_\e,Z_\e)\big)_{\mc{H}_\e\times \mc{H}_\e}\\
&\notag =
\norm{(V_\e,W_\e)}_{\mc{H}_\e\times{\mc{H}_\e}}^2+o(\norm{(V_\e,W_\e)}_{\mc{H}_\e\times{\mc{H}_\e}}^2) \end{align}
as $\e \to 0^+$.
Putting together \eqref{proof_prop_convergence_eigenfunctions_6} and \eqref{proof_prop_convergence_eigenfunctions_7} we obtain 
\begin{equation}\label{proof_prop_convergence_eigenfunctions_8}
\norm{(v_\e -v_0,w_\e-w_0)}_{\mc{H}_1\times{\mc{H}_1}}^2=
\norm{(V_\e,W_\e)}_{\mc{H}_\e\times{\mc{H}_\e}}^2+o(\norm{(V_\e,W_\e)}_{\mc{H}_\e\times{\mc{H}_\e}}^2) \quad \text{as } \e \to 0^+. 
\end{equation}
Letting $\widetilde{V}_\e$ and $\widetilde{W}_\e$ be as in \eqref{def_tilde_VW_e}, from
\eqref{proof_prop_convergence_eigenfunctions_8} and \eqref{limit_VWe} it follows that 
\begin{equation*}
\e^{-2 |m+\rho|}\norm{(v_\e -v_0,w_\e-w_0)}_{\mc{H}_1\times{\mc{H}_1}}^2=\|(\widetilde{V}_\e,\widetilde{W}_\e)\|_{\widetilde{\mc{X}}}^2(1+o(1))
=\|(\widetilde{V},\widetilde{W})\|_{\widetilde{\mc{X}}}^2+o(1)\quad \text{ as } \e \to 0^+,  
\end{equation*}
thus proving \eqref{limit_blow_up_norm_eigenfunciton}.
\end{proof}

\begin{proof}[Proof of Theorem \ref{theorem_blow_up_eigenfunctions}]
Let $u_0$ be an eigenfunction  of \eqref{prob_Aharonov-Bohm_0} associated to $\la_{0,n_0}$, with $\norm{u_0}_{L^2(\Omega)}=1$, and let $u_\e$ be an eigenfunction of \eqref{prob_Aharonov-Bohm_multipole} associated to $\la_{n_0,\e}$ satisfying \eqref{hp_ue_remormalised}. Then, letting $v_\e,w_\e$ be  as in \eqref{def_ve_we}, we have that $v_\e,w_\e$ satisfy \eqref{hp_ve_we}.
Letting 
\begin{equation*}
\widetilde u_\e(x)=\e^{-|m+\rho|}u_\e(\e x), \quad \widetilde v_\e(x)=\e^{-|m+\rho|}v_\e(\e x), \quad\text{and}\quad
\widetilde w_\e(x)=\e^{-|m+\rho|}w_\e(\e x),
\end{equation*}
by \eqref{eq:tras-grad}
and the fact that 
$\mc{A}_1^{(\rho_1\dots,\rho_k)}(x)=\e\mc{A}_\e^{(\rho_1\dots,\rho_k)}(\e x)$
and $\Theta_\e(\e x)=\Theta_1(x)$ for every $\e \in (0,1]$, we have 
\begin{align*}
    (i\nabla +\mc{A}_1^{(\rho_1\dots,\rho_k)})\widetilde u_\e(x)&=
    \e^{-|m+\rho|+1}((i\nabla +\mc{A}_\e^{(\rho_1\dots,\rho_k)})u_\e)(\e x)\\
&    =\e^{-|m+\rho|+1}ie^{i\Theta_\e(\e x)}(\nabla v_\e+i\nabla w_\e)(\e x)  
    =ie^{i\Theta_1}(\nabla \widetilde v_\e+i\nabla \widetilde w_\e),
    \end{align*}
and 
\begin{equation*}
    \widetilde u_\e=e^{i\Theta_1}  (\widetilde v_\e+i\widetilde w_\e),
\end{equation*}
so that \eqref{limit_blow_up_eigenfunciton} and again \eqref{eq:tras-grad}
with $\e=1$ yield, for every $r>0$,
\begin{align*}
   (i\nabla +\mc{A}_1^{(\rho_1\dots,\rho_k)})\widetilde u_\e&\to 
   ie^{i\Theta_1}  (\nabla(\Phi_0-\widetilde V)+i\nabla(\Psi_0-\widetilde W))\\
   &\quad=   
   (i\nabla +\mc{A}_1^{(\rho_1\dots,\rho_k)})(e^{i\Theta_1}((\Phi_0-\widetilde V)+i(\Psi_0-\widetilde W)) )\quad\text{as $\e\to0^+$ in }L^2(D_r,\C)
\end{align*}
and
\begin{equation*}
    \widetilde u_\e\to e^{i\Theta_1}\big((\Phi_0-\widetilde V)+i(\Psi_0-\widetilde W)\big)\quad\text{as $\e\to0^+$ in }L^2(D_r,\C).
\end{equation*}
We have thereby proved that $\widetilde u_\e\to 
e^{i\Theta_1}\big((\Phi_0-\widetilde V)+i(\Psi_0-\widetilde W)\big)$ as $\e\to0^+$ in $H^{1,1}(D_r,\C)$, as stated in \eqref{limit_blow_up_eigenfunction_not_gauged}.
 Finally \eqref{limit_convergence_rate_eigenfunction} follows directly from 
\eqref{eq:tras-grad} and \eqref{limit_blow_up_norm_eigenfunciton}.
\end{proof}


\bigskip\noindent {\bf Acknowledgments.}  
The authors are members of GNAMPA-INdAM.
V. Felli and G. Siclari are partially supported by the MUR-PRIN project no. 20227HX33Z 
``Pattern formation in nonlinear phenomena'' granted by the European Union -- Next Generation EU.
B. Noris is supported by the MUR grant Dipartimento di Eccellenza 2023-2027, by the MUR-PRIN project no. 2022R537CS ``NO$^3$'' granted by the European Union -- Next Generation EU, and by the
INdAM-GNAMPA Project 2023 ``Interplay between parabolic and elliptic PDEs''.


\end{document}